\newcommand{\IR}{\mathbb R}
\newcommand{\IQ}{\mathbb Q}
\newcommand{\w}{\omega}
\newcommand{\U}{\mathcal U}
\newcommand{\F}{\mathcal F}
\newcommand{\V}{\mathcal V}
\newcommand{\C}{\mathcal C}
\newcommand{\A}{\mathcal A}
\newcommand{\M}{\mathcal M}
\newcommand{\K}{\mathcal K}
\newcommand{\add}{\mathrm{add}}
\newcommand{\supp}{\mathrm{supp}}
\newcommand{\pr}{\mathrm{pr}}
\newcommand{\Ra}{\Rightarrow}
\newcommand{\cbox}{\boxdot}
\newcommand{\cl}{\mathrm{cl}}
\newcommand{\St}{\mathcal{S}t}
\newcommand{\id}{\mathrm{id}}
\newcommand{\e}{\varepsilon}
\newcommand{\diam}{\mathrm{diam}}
\newtheorem{theorem}{Theorem}[section]
\newtheorem{corollary}[theorem]{Corollary}
\newtheorem{proposition}[theorem]{Proposition}
\newtheorem{problem}[theorem]{Problem}
\newtheorem{lemma}[theorem]{Lemma}
\newtheorem{example}[theorem]{Example}
\theoremstyle{definition}
\newtheorem{definition}[theorem]{Definition}
\newtheorem{remark}[theorem]{Remark}
\title[Maslyuchenko spaces]{Quasicontinuous and separately continuous functions\\ with values in Maslyuchenko spaces}
\author{Taras Banakh}
\address{Ivan Franko National University of Lviv (Ukraine) and Jan Kochanowski University in Kielce (Poland)}
\email{t.o.banakh@gmail.com}
\keywords{Quasicontinuous function, separately continuous function}
\subjclass{54C08, 54E18, 54E20, 54E35, 54E52}
\thanks{The author has been partially financed by NCN grant DEC-2012/07/D/ST1/02087.}
\dedicatory{Dedicated to the 65-th birthday of V.K.~Maslyuchenko}
\begin{document}

\begin{abstract} We generalize some classical results about quasicontinuous and separately continuous functions with values in metrizable spaces to functions with values in certain generalized metric spaces, called Maslyuchenko spaces. We establish stability properties of the classes of Maslyuchenko spaces and study the relation of these classes to known classes of generalized metric spaces (such as Piotrowski or Stegall spaces). One of our results says that for  any $\aleph_0$-space $Z$ and any separately continuous function $f:X\times Y\to Z$ defined on the product of a topological space $X$ and a second-countable space $Y$, the set $D(f)$ of discontinuity points of $f$ has meager projection on $X$.
\end{abstract}
\maketitle

\section{Introduction}

The problem of evaluation of the sets of discontinuity points of quasicontinuous and separately continuous functions is classical in Real Analysis and traces its history back to the famous dissertation of Baire \cite{Baire}.

Let us recall that a function $f:X\to Y$ between topological spaces is {\em quasicontinuous at a point} $x\in X$ if for any neighborhood $O_x\subset X$ of $x$ and any neighborhood $O_{f(x)}\subset Y$ of $f(x)$ there exists a non-empty open set $U\subset O_x$ such that $f(U)\subset O_{f(x)}$. A function $f:X\to Y$ is {\em quasicontinuous} if it is quasicontinuous at each point $x\in X$. Formally, quasicontinuous functions were introduced by Kempisty \cite{Kemp} but implicitly they appeared earlier in works of Baire and Volterra.

The following property of quasicontinuous functions is well-known (and can be easily derived from the definition), see \cite{Bled}, \cite{Lev63}, \cite{Neu}, \cite{Piot85}, \cite{Piot90}.

\begin{theorem}\label{t:K} For any quasicontinuous function $f:X\to Y$ from a topological space $X$ to a metrizable space $Y$ the set $D(f)$ of discontinuity points of $f$ is meager in $X$.
\end{theorem}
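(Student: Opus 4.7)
The plan is to use the standard oscillation argument adapted to the quasicontinuous setting. Fix a metric $d$ generating the topology of $Y$, and for a point $x \in X$ define the oscillation
$$\omega_f(x) = \inf\{\diam f(U) : U \text{ open in } X,\ x \in U\},$$
where $\diam$ is measured with respect to $d$. A classical observation is that $f$ is continuous at $x$ if and only if $\omega_f(x) = 0$, so
$$D(f) = \bigcup_{n \in \IN} D_n,\qquad D_n = \{x \in X : \omega_f(x) \ge 1/n\}.$$
It therefore suffices to prove that each $D_n$ is nowhere dense in $X$.

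First I would check that each $D_n$ is closed: if $x \notin D_n$, pick an open $U \ni x$ with $\diam f(U) < 1/n$; then every $y \in U$ has $\omega_f(y) < 1/n$, so $U \cap D_n = \emptyset$.

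The main step, and the only place where quasicontinuity is used, is to show $D_n$ has empty interior. Suppose on the contrary that $V \subset D_n$ is a nonempty open set, and pick any $x_0 \in V$. Apply the quasicontinuity of $f$ at $x_0$ with the neighborhood $V$ of $x_0$ and the open ball $O_{f(x_0)} = \{y \in Y : d(y, f(x_0)) < 1/(3n)\}$: there exists a nonempty open $U \subset V$ with $f(U) \subset O_{f(x_0)}$, hence $\diam f(U) \le 2/(3n) < 1/n$. But then every point of $U$ has oscillation strictly less than $1/n$, contradicting $U \subset V \subset D_n$.

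No serious obstacle is anticipated; the only subtlety is the passage from ``meager'' to ``nowhere dense on each level $D_n$'', which is handled by the closedness of $D_n$ together with the oscillation-reduction argument above. The argument uses only that the topology of $Y$ is generated by a metric, so that arbitrarily small open balls around $f(x_0)$ are available; this is precisely what Maslyuchenko-type generalizations later in the paper will need to replace.
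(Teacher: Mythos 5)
Your proof is correct. The paper itself offers no proof of this theorem --- it records it as well-known, ``easily derived from the definition,'' and cites the literature --- and your oscillation argument (decompose $D(f)=\bigcup_n D_n$ with $D_n=\{x:\omega_f(x)\ge 1/n\}$, note each $D_n$ is closed, and use quasicontinuity to carve a nonempty open $U$ of small oscillation out of any open $V\subset D_n$) is precisely the standard derivation intended there; every step checks out.
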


We recall that a subset $M$ of a topological space $X$ is {\em meager} in $X$ if $M$ can be written as the countable union of nowhere dense subsets of $X$.

In fact, the metrizability of the space $Y$ in Theorem~\ref{t:K} can be weakened to the strict fragmentability of $Y$. We recall that a topological space $Y$ is {\em fragmented} by a metric $d$ if for every $\e>0$, each non-empty subspace $A\subset X$ contains a non-empty relatively open subset $U\subset A$ of $d$-diameter $\diam(U)<\e$. If the metric $d$ generates a topology at least as strong as the original topology of $X$, then we shall say that $X$ is {\em strictly fragmented} by the metric $d$. A topological space is called ({\em strictly}) {\em fragmentable} if it is (strictly) fragmented by some metric. It is clear that each metrizable space is strictly fragmentable and each strictly fragmentable space is fragmentable. By \cite{Rib}, a compact Hausdorff space is strictly fragmentable if and only if it is fragmentable. In \cite{KKM} this equivalence was generalized to so-called game determined spaces. The following  generalization of Theorem~\ref{t:K} was obtained in \cite[Theorem 2.7]{GKMS}, \cite[Lemma 4.1]{MG}, \cite[Theorem 5.1]{KM3}, \cite[Theorem 1]{KKM}.

\begin{theorem}[Giles-Kenderov-Kortezov-Moors-Sciffer]\label{t:sF-P} For any quasicontinuous function $f:X\to Y$ from a topological space $X$ to a strictly fragmentable space $Y$ the set $D(f)$ of discontinuity points of $f$ is meager in $X$.
\end{theorem}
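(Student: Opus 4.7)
Let $d$ be a metric that strictly fragments $Y$, and write $\tau$ for the original topology of $Y$ and $\tau_d$ for the topology generated by $d$. By the definition of strict fragmentability $\tau\subseteq\tau_d$, so continuity of $f$ at a point $x$ with respect to $\tau_d$ on the codomain implies continuity with respect to $\tau$. Consequently, writing $D_d(f)$ for the set of points at which $f$ is not $d$-continuous, we have $D(f)\subseteq D_d(f)$, and it is enough to prove that $D_d(f)$ is meager in $X$.

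Next I would run the standard oscillation decomposition: for each $n\in\IN$ set
\[
A_n=\{x\in X:\diam_d(f(V))\geq 1/n\text{ for every open neighborhood }V\text{ of }x\text{ in }X\}.
\]
Each $A_n$ is closed in $X$ (its complement is clearly open) and $D_d(f)=\bigcup_{n\in\IN}A_n$, so the whole task reduces to showing that every $A_n$ is nowhere dense.

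For the core of the argument, suppose toward a contradiction that some $A_n$ contains a non-empty open set $W\subseteq X$. Apply fragmentability to the non-empty subspace $f(W)\subseteq Y$: there exists a $\tau$-open set $O\subseteq Y$ with $O\cap f(W)\neq\emptyset$ and $\diam_d(O\cap f(W))<1/n$. Pick a point $x\in W$ with $f(x)\in O$; since $f$ is quasicontinuous at $x$ and both $W$ and $O$ are neighborhoods of $x$ and $f(x)$ respectively, there is a non-empty open $U\subseteq W$ with $f(U)\subseteq O$, hence $f(U)\subseteq O\cap f(W)$ and $\diam_d(f(U))<1/n$. But then any $x'\in U$ has an open neighborhood (namely $U$) whose image has $d$-diameter $<1/n$, contradicting $x'\in W\subseteq A_n$.

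The one subtle point --- really the only place the hypothesis is used --- is that fragmentability is phrased with respect to the original topology on $Y$ (the set $O\cap f(W)$ is relatively $\tau$-open in $f(W)$), while the small-diameter information it produces is measured in the possibly finer metric $d$. Quasicontinuity, being a $\tau$-notion on the codomain side, is precisely what lets us pull the small-$d$-diameter piece back to an open subset of $X$. Strict (as opposed to mere) fragmentability enters only through the very first step $D(f)\subseteq D_d(f)$, and this is where I expect the main conceptual, though not technical, obstacle to lie.
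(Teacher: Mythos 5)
Your argument is correct, and it is essentially the standard proof of this result: the paper itself does not prove Theorem~\ref{t:sF-P} but cites it from \cite{GKMS}, \cite{MG}, \cite{KM3}, \cite{KKM}, where the same scheme appears --- reduce $D(f)$ to the $d$-oscillation sets $A_n$ via $\tau\subseteq\tau_d$, then use fragmentability of $f(W)$ together with quasicontinuity to show each closed set $A_n$ has empty interior. No gaps; your closing remark correctly isolates where strictness (as opposed to mere fragmentability) is used.
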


In \cite{Piot98} Piotrowski suggested to study spaces $Y$ for which every quasicontinuous function $f:X\to Y$ defined on a non-empty Baire space $X$ has a continuity point. Topological spaces $Y$ possessing this property are called {\em Piotrowski} spaces. Properties of Piotrowski spaces are discussed in the survey paper  \cite{BM}. By \cite{BM}, a regular topological space $X$ is Piotrowski if and only if for any quasicontinuous function $f:Z\to X$ the set $D(f)$ is discontinuity points of $f$ is meager in $Z$.

Concerning discontinuity points of separately continuous functions we have the following classical result due to Calbrix and Troallic \cite{CT}.

\begin{theorem}[Calbrix-Troallic]\label{t:CT} Let $X$ be a topological space, $Y$ be a second countable space and $Z$ be a metrizable space. For any separately continuous function $f:X\times Y\to Z$ the set $D(f)$ of discontinuity points of $f$ has meager projection on $X$.
\end{theorem}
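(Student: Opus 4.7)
My plan is to show, for each $\e>0$, that the projection of the $\e$-oscillation set $D_\e(f):=\{p\in X\times Y:\mathrm{osc}(f,p)\ge\e\}$ onto $X$ is meager in $X$; since $D(f)=\bigcup_{n\ge 1}D_{1/n}(f)$, this will imply that $\pi_X(D(f))$ is meager. Fix a metric $d$ generating the topology of $Z$ and a countable base $\{V_m\}_{m\in\IN}$ of non-empty open subsets of $Y$.

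The main device will be an $\e$-calibrated closed cover of $X$: for each $m$, put
$$F_m:=\{x\in X:\diam_d f(\{x\}\times V_m)\le\e/4\}.$$
Separate continuity of $f$ in the $X$-variable makes $F_m$ closed, as it equals the intersection over $(y_1,y_2)\in V_m\times V_m$ of the closed sets $\{x:d(f(x,y_1),f(x,y_2))\le\e/4\}$. Separate continuity in the $Y$-variable makes $\{F_m\}_{m\in\IN}$ cover $X$: for every $x\in X$ and every $y\in Y$, the continuous map $f_x\colon Y\to Z$ has small diameter on some basic neighborhood $V_m\ni y$, placing $x$ into $F_m$. Then $M:=\bigcup_m(F_m\setminus\mathrm{int}\,F_m)$ is a meager $F_\sigma$-subset of $X$.

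The final task is to verify $\pi_X(D_\e(f))\subseteq M$. Given $x_0\notin M$ and $y_0\in Y$, the cover yields an $m$ with $y_0\in V_m$ and $x_0\in F_m$, and then $x_0\in\mathrm{int}\,F_m$ since $x_0\notin M$. Continuity of $f^{y_0}\colon X\to Z$ provides an open neighborhood $U\subseteq\mathrm{int}\,F_m$ of $x_0$ on which $d(f(x,y_0),f(x_0,y_0))<\e/8$. For any $(x_i,y_i)\in U\times V_m$ with $i=1,2$, a four-term triangle inequality routed through $(x_1,y_0)$ and $(x_2,y_0)$, combined with $x_1,x_2\in F_m$, should give $d(f(x_1,y_1),f(x_2,y_2))\le 3\e/4$, whence $\mathrm{osc}(f,(x_0,y_0))\le 3\e/4<\e$, so $(x_0,y_0)\notin D_\e(f)$.

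I do not foresee a serious obstacle; the argument is careful bookkeeping. The key idea is to decouple the $y$-diameter at fixed $x$ (controlled by $F_m$) from the $x$-variation at fixed $y_0$ (controlled by continuity of $f^{y_0}$), together with the elementary observation that for any countable closed cover of a space the union of boundaries $F_m\setminus\mathrm{int}\,F_m$ is meager. Second countability of $Y$ enters only to make this cover countable; notably, the argument does not invoke Theorem~\ref{t:K} and makes no use of metrizability of $Y$.
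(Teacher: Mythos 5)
Your argument is correct and complete: the paper itself states Theorem~\ref{t:CT} as a classical result with a citation to Calbrix--Troallic and gives no proof, so there is nothing internal to compare against, but your construction of the closed sets $F_m$, the meagerness of the union of their boundaries, and the four-term triangle inequality bounding the oscillation by $3\e/4$ is exactly the standard proof and all steps check out. The only cosmetic issue is that your notation $f^{y_0}$ for the map $x\mapsto f(x,y_0)$ clashes with the paper's convention, which writes this map as $f_{y_0}$ and reserves $f^{x}$ for the map $y\mapsto f(x,y)$.
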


In \cite{M98} this theorem was generalized to $KC$-functions.
Let $X,Y,Z$ be topological spaces. A function $f:X\times Y\to Z$ is called a {\em $KC$-function} if
\begin{itemize}
\item for every $y\in Y$ the function $f_y:X\to Z$, $f_y:x\mapsto f(x,y)$, is quasicontinuous, and
\item for every $x\in X$ the function $f^x:Y\to Z$, $f^x:y\mapsto f(x,y)$, is continuous.
\end{itemize}

Generalizing the Calbrix-Troallic Theorem, Maslyuchenko \cite{M98} obtained the following result.

\begin{theorem}[Maslyuchenko]\label{t:KC} Let $X$ be a topological space, $Y$ be a second countable space and $Z$ be a metrizable space. For any $KC$-function $f:X\times Y\to Z$ the set $D(f)$ of discontinuity points has meager projection on $X$.
\end{theorem}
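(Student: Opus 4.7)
The plan is to mimic the proof of the Calbrix--Troallic theorem (Theorem~\ref{t:CT}), combining it with Theorem~\ref{t:K} applied slicewise. Fix a bounded compatible metric $d \leq 1$ on $Z$, a countable base $\{V_n : n \in \omega\}$ of $Y$, and a countable dense set $D = \{y_i : i \in \omega\} \subset Y$. Each slice $f_{y_i} : X \to Z$ is quasicontinuous into a metrizable space, so Theorem~\ref{t:K} gives $D(f_{y_i})$ meager, and hence $M := \bigcup_i D(f_{y_i})$ is meager. For $n \in \omega$ and $k \geq 1$ set
$$A_{n,k} := \bigl\{ x \in X : \diam f(\{x\} \times V_n) < 1/k \text{ and } \omega(f,(x,y)) \geq 3/k \text{ for some } y \in V_n \bigr\}.$$
Continuity of each vertical section $f^x$, together with $\{V_n\}$ being a base of $Y$, yields $\pr_X(D(f)) \subset \bigcup_{n, k} A_{n, k}$: for $(x, y_0) \in D(f)$ with $\omega(f, (x, y_0)) > 3/k$ one chooses $V_n \ni y_0$ with $\diam f(\{x\} \times V_n) < 1/k$.

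The heart of the argument is to show $A_{n, k} \subset M$. Fix $x \in A_{n, k}$ with a witness $y_0 \in V_n$. For any neighborhoods $U$ of $x$ and $V \subset V_n$ of $y_0$ the oscillation assumption produces pairs $(x_j, y_j) \in U \times V$ ($j = 1, 2$) with $d(f(x_1, y_1), f(x_2, y_2)) \geq 3/k - \e$; continuity of $f^{x_j}$ at $y_j$ lets us replace $y_j$ by a nearby $\tilde y_j \in D \cap V$ losing at most $2\e$. Triangulating through $(x, \tilde y_1)$ and $(x, \tilde y_2)$ and invoking $\diam f(\{x\} \times V_n) < 1/k$ shows
$$d\bigl( f_{\tilde y_1}(x_1), f_{\tilde y_1}(x) \bigr) + d\bigl( f_{\tilde y_2}(x_2), f_{\tilde y_2}(x) \bigr) \geq 2/k - 3\e,$$
so some $\tilde y \in D \cap V$ and $x' \in U$ satisfy $d(f_{\tilde y}(x'), f_{\tilde y}(x)) \geq 1/(2k)$. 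A pigeonhole argument over the countable set $D \cap V_n$ then produces a single index $i$ for which $\omega(f_{y_i}, x) \geq 1/(2k)$, so $x \in D(f_{y_i}) \subset M$; hence $\pr_X(D(f)) \subset M$, which is meager.

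The delicate point is the final pigeonhole extraction: from ``for every pair of neighborhoods $(U, V)$ there exists some index $i = i(U, V)$'' one must produce a single $i$ witnessing a positive oscillation of $f_{y_i}$ at $x$. This step is where the interplay between quasicontinuity in the $x$-variable (controlled via Theorem~\ref{t:K}) and continuity in the $y$-variable (enabling the triangle inequality through $(x, \tilde y_j)$) is used most essentially, and it relies on the countability of $D \cap V_n$ together with the continuity of the sections $f^x$ to stabilize the choice of index as the neighborhoods shrink.
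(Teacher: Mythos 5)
The paper does not actually prove Theorem~\ref{t:KC} (it is quoted from \cite{M98}), so your argument has to stand on its own, and it has a genuine gap: the central inclusion $A_{n,k}\subset M$ is false. Take $X=Y=Z=\IR$ and the standard separately continuous function $f(x,y)=2xy/(x^2+y^2)$, $f(0,0)=0$. Every slice $f_y$ is continuous on all of $\IR$, so $M=\bigcup_i D(f_{y_i})=\emptyset$ for any countable dense $D\subset Y$; yet $\diam f(\{0\}\times V_n)=0$ and $\omega(f,(0,0))=2$, so $0\in A_{n,k}$ for every $V_n\ni 0$ and $k\ge 2$. Your own computation shows exactly why: for each pair of neighborhoods $(U,V)$ you produce some $\tilde y=\tilde y(U,V)\in D\cap V$ and $x'\in U$ with $d(f_{\tilde y}(x'),f_{\tilde y}(x))\ge 1/(2k)$, which is the statement ``$\forall (U,V)\ \exists\,\tilde y$''; what you need is ``$\exists\,\tilde y\ \forall U$''. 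Since $X$ is an arbitrary topological space there is no countable neighborhood base at $x$ along which to pigeonhole, and in the example the witness $\tilde y$ necessarily escapes to $0$ as $U$ and $V$ shrink while each individual $f_{\tilde y}$ remains continuous at $0$. So the quantifier swap is not a delicate point to be finessed; the intermediate claim it is meant to establish is simply wrong.

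The standard repair (and the shape of the analogous arguments in Sections~\ref{s:stable}--\ref{s:QC} of the paper) is to add a second meager set coming from boundaries. Put $A_{n,k}=\{x:\diam f(\{x\}\times V_n)\le 1/k\}$ (no oscillation clause) and $N=\bigcup_{n,k}\bigl(\overline{A_{n,k}}\setminus\mathrm{int}(\overline{A_{n,k}})\bigr)$; each summand is the boundary of a closed set, hence nowhere dense, so $N$ is meager. For $x_0\notin M\cup N$ and $y_0\in Y$, continuity of $f^{x_0}$ gives $n$ with $y_0\in V_n$ and $x_0\in A_{n,k}$, whence $x_0\in\mathrm{int}(\overline{A_{n,k}})=:U_1$; continuity of $f_{y_i}$ at $x_0$ for one fixed $y_i\in D\cap V_n$ (here is where $x_0\notin M$ is used) shrinks $U_1$ to a neighborhood $U_2$ on which $d(f(\cdot,y_i),f(x_0,y_i))<1/k$. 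For an arbitrary $(x',y')\in U_2\times V_n$ one then invokes quasicontinuity of the slice $f_{y'}$ at $x'$ to get a nonempty open $W\subset U_2$ with $f_{y'}(W)\subset B(f(x',y'),1/k)$; since $W\subset\overline{A_{n,k}}$ it meets $A_{n,k}$ in some point $w$, and the chain $f(x',y')\to f(w,y')\to f(w,y_i)\to f(x_0,y_i)\to f(x_0,y_0)$ yields $d(f(x',y'),f(x_0,y_0))<4/k$. Note that this step uses quasicontinuity of $f_{y'}$ for the uncountably many $y'\in V_n$, not just for the countable dense set, and it is precisely the ingredient your proposal is missing.
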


In \cite{MN00} Theorem~\ref{t:KC} was generalized to $K_hC$-function and in \cite{BT} it was further generalized to functions $f:X\times Y\to Z$ which are lower quasicontinuous with respect to the first variable and continuous with respect to the second variable. In this paper such functions will be called $QC$-functions.

More precisely, a function $f:X\times Y\to Z$ is called a {\em $QC$-function} if
\begin{itemize}
\item $f$ is {\em lower quasicontinuous with respect to the first variable} in the sense that for every point $(x,y)\in X\times Y$ and open sets $O_x\subset X$, $O_y\subset Y$, $O_{f(x,y)}\subset Z$  with $(x,y)\in O_x\times O_y$ and $f(x,y)\in O_{f(x,y)}$ there exists a non-empty open set $W\subset O_x$ such that $f(\{w\}\times O_y)\cap O_{f(x,y)}\ne\emptyset$ for every $w\in W$, and
\item $f$ is {\em continuous with respect to the second variable}, which means that for every $x\in X$ the function $f^x:Y\to Z$, $f^x:y\mapsto f(x,y)$, is continuous.
\end{itemize}

The following generalization of Theorem~\ref{t:KC} was proved by Bouziad and Troallic \cite{BT}. It was further generalized by Maslyuchenko and Nesterenko \cite{MN14}.

\begin{theorem}[Bouziad-Troallic]\label{t:QC} Let $X$ be a topological space, $Y$ be  a second  countable space and $Z$ be a metrizable space. For every $QC$-function $f:X\times Y\to Z$ the set $D(f)$ of discontinuity points of $X$ has meager projection on $X$.
\end{theorem}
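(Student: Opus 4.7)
My plan is to reduce to showing, for each $\varepsilon>0$, that the set
$$D_\varepsilon(f):=\{(x,y)\in X\times Y:\diam f(U\times V)\ge\varepsilon\text{ for all open }U\ni x,\ V\ni y\}$$
has meager projection on $X$, where $\diam$ is measured with respect to a fixed compatible metric $d$ on $Z$. Since $D(f)=\bigcup_{k\in\omega}D_{1/k}(f)$, this yields the theorem. Fix such $\varepsilon$ and a countable base $\{V_n:n\in\omega\}$ for $Y$, and define for each $n$
$$X_n:=\{x\in X:\diam f(\{x\}\times V_n)<\varepsilon/5\},\qquad H_n:=\{x\in X:\exists\text{ open }W\ni x,\ \diam f(W\times V_n)<\varepsilon\},$$
together with $U_n:=\mathrm{int}(\overline{X_n})$. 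Each $H_n$ is open; the continuity of every $f^x$ combined with the base property supplies, for every $(x,y)\in X\times Y$, an index $n$ with $y\in V_n$ and $x\in X_n$. The meager candidate is $N:=\bigcup_n(\overline{X_n}\setminus U_n)\cup\bigcup_n(\overline{H_n}\setminus H_n)$, a countable union of nowhere-dense sets (each being the boundary of a closed or of an open set).

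Once the key inclusion $U_n\subset\overline{H_n}$ is established, the argument closes in a line: for $x\notin N$ and any $y\in Y$, pick $n$ with $y\in V_n$ and $x\in X_n$; then $x\in X_n\subset\overline{X_n}$ and $x\notin\overline{X_n}\setminus U_n$ force $x\in U_n\subset\overline{H_n}$, after which $x\notin\overline{H_n}\setminus H_n$ forces $x\in H_n$, supplying an open $W\ni x$ with $\diam f(W\times V_n)<\varepsilon$, whence $(x,y)\notin D_\varepsilon(f)$. Hence $\pi_X(D_\varepsilon(f))\subset N$.

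The main obstacle is proving $U_n\subset\overline{H_n}$, for which lower quasicontinuity must be invoked twice. Take $x\in U_n$ and any open neighborhood $O_x\subset U_n$; density of $X_n$ in $U_n$ yields $x_0\in O_x\cap X_n$, and choosing any $y_0\in V_n$ and setting $z_0:=f(x_0,y_0)$ gives $f^{x_0}(V_n)\subset B(z_0,\varepsilon/5)$. Applying lower quasicontinuity at $(x_0,y_0)$ to the triple $(O_x,V_n,B(z_0,\varepsilon/5))$ produces a non-empty open $W\subset O_x$ whose every vertical slice $f(\{w\}\times V_n)$ meets $B(z_0,\varepsilon/5)$. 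To see that $\diam f(W\times V_n)<\varepsilon$ (which forces $W\subset H_n$), fix $(w,y)\in W\times V_n$ and $\gamma>0$; a second application of lower quasicontinuity at $(w,y)$ with $(W,V_n,B(f(w,y),\gamma))$ yields a non-empty open $W'\subset W$, and since $W'\subset U_n$ I can pick $w'\in W'\cap X_n$. The second invocation provides $y'\in V_n$ with $d(f(w',y'),f(w,y))<\gamma$; the first provides $y''\in V_n$ with $d(f(w',y''),z_0)<\varepsilon/5$; and $w'\in X_n$ gives $d(f(w',y'),f(w',y''))<\varepsilon/5$. Three triangle inequalities produce $d(f(w,y),z_0)<\gamma+2\varepsilon/5$, and since $\gamma$ was arbitrary $f(W\times V_n)\subset\overline{B(z_0,2\varepsilon/5)}$, a set of diameter at most $4\varepsilon/5<\varepsilon$. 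Thus $W\subset H_n$, so $O_x\cap H_n\ne\emptyset$ and $x\in\overline{H_n}$, establishing the key inclusion and completing the proof.
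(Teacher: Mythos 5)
Your argument is correct and complete. Note, however, that the paper itself gives no proof of this statement: Theorem~\ref{t:QC} is imported verbatim from Bouziad and Troallic \cite{BT} and used as a black box (it is the input to Theorem~\ref{t:QC-stable}, which supplies the base case ``metrizable $\Rightarrow$ $QC$-Maslyuchenko''). So there is no in-paper proof to compare against; judged on its own, your oscillation argument is a valid self-contained derivation in the classical Calbrix--Troallic style. The decomposition $D(f)=\bigcup_k D_{1/k}(f)$, the reduction of the meager set to boundaries of the sets $\overline{X_n}$ and $H_n$, and the use of continuity of the sections $f^x$ to cover $\{x\}\times Y$ by pairs $(X_n,V_n)$ are all standard and correctly executed; the genuinely delicate step is your double invocation of lower quasicontinuity (once at a point of $X_n$ to anchor all slices near $z_0$, once at an arbitrary $(w,y)$ with radius $\gamma\to 0$ to drag $f(w,y)$ back to within $2\varepsilon/5$ of $z_0$), and the triangle-inequality bookkeeping there checks out, with $4\varepsilon/5<\varepsilon$ giving $W\subset H_n$ as claimed. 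The only cosmetic gap is the degenerate case $V_n=\emptyset$, where $X_n=H_n=X$ and the key inclusion holds trivially.
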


The Calbrix-Troallic Theorem implies the following known description of the set $D(f)$ of discontinuity points of  a separately continuous function $f$ defined on the product of second-countable spaces.

\begin{theorem}\label{t:CC} Let $X,Y$ be second-countable spaces and $Z$ be a metrizable space. For any separately continuous function $f:X\times Y\to Z$ the set $D(f)$ of discontinuity points of $f$ has meager projections on the spaces $X$, $Y$, and hence is contained in the product $M_X\times M_Y$ of two meager sets $M_X\subset X$ and $M_Y\subset Y$.
\end{theorem}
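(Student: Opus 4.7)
The plan is to apply the Calbrix-Troallic Theorem (Theorem~\ref{t:CT}) twice, once with the roles of the two variables as given and once after swapping them. Since $Y$ is second-countable and $Z$ is metrizable, Theorem~\ref{t:CT} applied to the separately continuous function $f:X\times Y\to Z$ yields that the projection $\pr_X(D(f))$ is meager in $X$. Denote this meager set by $M_X$.

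For the second projection, I would consider the ``swapped'' function $\tilde f:Y\times X\to Z$ defined by $\tilde f(y,x)=f(x,y)$. Since $f$ is separately continuous, so is $\tilde f$, and the natural homeomorphism $X\times Y\cong Y\times X$ carries $D(f)$ bijectively onto $D(\tilde f)$. Because $X$ is second-countable and $Z$ is metrizable, Theorem~\ref{t:CT} applied to $\tilde f$ now gives that $\pr_Y(D(\tilde f))=\pr_Y(D(f))$ is meager in $Y$. Denote this meager set by $M_Y$.

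Finally, for any $(x,y)\in D(f)$, by definition $x\in\pr_X(D(f))\subset M_X$ and $y\in\pr_Y(D(f))\subset M_Y$, hence $D(f)\subset M_X\times M_Y$ as required.

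There is no real obstacle here: the theorem is a direct corollary of Theorem~\ref{t:CT} and the symmetry of the hypotheses in the variables $X$ and $Y$. The only point worth noting is that separate continuity is preserved by the swap $(x,y)\mapsto(y,x)$, which makes the second application of Theorem~\ref{t:CT} legitimate.
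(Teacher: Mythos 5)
Your proof is correct and is exactly the argument the paper intends: the paper offers no separate proof, merely noting that Theorem~\ref{t:CT} implies Theorem~\ref{t:CC}, and your two applications of Theorem~\ref{t:CT} (once as stated and once after swapping the variables, which preserves separate continuity) together with the trivial inclusion $D(f)\subset\pr_X(D(f))\times\pr_Y(D(f))$ are precisely that deduction.
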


In \cite{M92} V.K.Maslyuchenko initiated a program of research with aim to extend known results about quasicontinuous or separately continuous functions with values in metrizable spaces to functions taking values in more general spaces. Later this program was realized in the papers \cite{M99, MMF06, MMF08,MMS} of Maslyuchenko and coauthors.


Motivated by the Maslyuchenko program, in this paper we suggest a global approach to generalizing Theorems~\ref{t:CT}--\ref{t:CC} and introduce the following three classes of topological spaces.

\begin{definition} A topological space $Z$ is called
\begin{itemize}
\item {\em $KC$-Maslyuchenko}  if for each $KC$-function $f:X\times Y\to Z$ defined on the product of a topological space $X$ and a second-countable space $Y$ the set $D(f)\subset X\times Y$ has meager projection on $X$;
\item {\em $QC$-Maslyuchenko}  if for each $QC$-function  $f:X\times Y\to Z$ defined on the product of a topological space $X$ and a second-countable space $Y$ the set $D(f)\subset X\times Y$ has meager projection on $X$;
\item {\em $CC$-Maslyuchenko} if for any separately continuous function $f:X\times Y\to Z$ defined on the product of separable metrizable spaces $X,Y$ the set $D(f)\subset X\times Y$ has meager projections on $X$ and $Y$.
\end{itemize}
\end{definition}

For any topological spaces these properties relate as follows:
$$\xymatrix{
\mbox{metrizable}\ar@{=>}[r]\ar@{=>}[d]&\mbox{$QC$-Maslyuchenko}\ar@{=>}[r]&\mbox{$KC$-Maslyuchenko}\ar@{=>}[r]\ar@{=>}[d]&\mbox{$CC$-Maslyuchenko}\\
\mbox{strictly}\atop\mbox{fragmentable}\ar@{=>}[rr]&&\mbox{Piotrowski}.
}
$$

The classes of $QC$-Maslyuchenko, $KC$-Maslyuchenko, $CC$-Maslyuchenko spaces will be studied in Sections~\ref{s:QC}, \ref{s:KC}, \ref{s:CC}, respectively.   In Section~\ref{s:stable} we introduce $M$-stable classes of topological spaces and prove that each $M$-stable class of topological spaces contains all $\aleph_0$-spaces. In the next sections we prove that the classes of $QC$-Maslyuchenko, $KC$-Maslyuchenko, and $CC$-Maslyuchenko spaces are $M$-stable. Using the $M$-stability of the classes of Maslyuchenko spaces,  we prove that the class of $QC$-Maslyuchenko spaces contains all $\aleph_0$-spaces and all strongly $\sigma$-metrizable spaces, and the class of $CC$-Maslyuchenko spaces contains  and all $\aleph_0$-monolithic spaces (in particular, all $k^*$-metrizable spaces).

Now we recall the definitions of generalized metric spaces (mentioned above).

Let $X$ be a topological space. A family $\mathcal N$ of subsets of $X$ is called
\begin{itemize}
\item a {\em network} if for any point $x\in X$ and neighborhood $O_x\subset X$ of $x$ there is a set $N\in\mathcal N$ such that $x\in N\subset O_x$;
\item a {\em $k$-network} if for any compact subset $K\subset X$ and neighborhood $O_K\subset X$ of $K$ there exists a finite subfamily $\F\subset\mathcal N$ such that $K\subset \bigcup\F\subset O_x$.
\end{itemize}
It is clear that each base of the topology is a $k$-network and each $k$-network is a network.

A regular topological space $X$ is called
\begin{itemize}
\item {\em cosmic} if $X$ has a countable network;
\item an {\em $\aleph_0$-space} if $X$ has a countable $k$-network;
\item a {\em $\sigma$-space} if $X$ has a $\sigma$-locally finite network;
\item an {\em $\aleph$-space} if $X$ has a $\sigma$-locally finite $k$-network;
\item {\em $k^*$-metrizable} if $X$ has $\sigma$-compact-finite $k$-network;
\item {\em $\sigma$-metrizable} if $X$ has a countable cover by closed metrizable subspaces;
\item {\em strongly} {\em $\sigma$-metrizable} if $X$ can be written as the union $X=\bigcup_{n\in\w}X_n$ of an (increasing) sequence of closed metrizable subspaces such that each convergent sequence in $X$ is contained in some set $X_n$, $n\in\w$;
\item {\em $\aleph_0$-monolithic} if each countable subspace of $X$ is an $\aleph_0$-space and  the sequential closure of any $\aleph_0$-subspace in $X$ is an $\aleph_0$-space.
\end{itemize}
By the {\em sequential closure} of a subset $A$ of a topological space $X$ we understand the set $\cl_1(A)$ of limit points of sequences $\{a_n\}_{n\in\w}\subset A$ that converge in $X$.

More information on generalized metric spaces can be found in the surveys of Gruenhage \cite{Grue}, \cite{Grue2}, \cite{Grue3} (for  $k^*$-metrizable spaces, see \cite{BBK}). By Theorem 6.4 in \cite{BBK}, a regular space $Y$ is $k^*$-metrizable if and only if $Y$ is the image of a metrizable space $M$ under a continuous map $f:M\to X$ that admits a section $s:X\to M$ which preserves precompact sets in the sense that for every compact subset $K\subset X$ the set $s(K)$ has compact closure in $M$. This condition implies that the map $f:M\to X$ is {\em compact-covering} in the sense that every compact set $K\subset X$ coincides with the image $f(C)$ of some compact set $C\subset M$.
By \cite{Mich}, a regular space $X$ is cosmic (resp. an $\aleph_0$-space) if and only if $X$ is the image of a separable metrizable space under a continuous (and compact-covering) map. By \cite{BM}, each $\sigma$-space is strictly fragmentable.

The interplay between the generalized metric spaces discussed above are described in the following diagram.
$$\xymatrix{
\mbox{metrizable}\ar@{=>}[r]\ar@{=>}[d]
&\mbox{$k^*$-metrizable}\ar@{=>}[r]&\mbox{$\aleph_0$-monolithic}\\
\mbox{strongly}\atop\mbox{$\sigma$-metrizable}\ar@{=>}[r]\ar@{=>}[d]
&\mbox{$\aleph$-space}\ar@{=>}[d]\ar@{=>}[u]&\mbox{$\aleph_0$-space}\ar@{=>}[l]\ar@{=>}[d]\ar@{=>}[u]\\
\mbox{$\sigma$-metrizable}\ar@{=>}[r]&\mbox{$\sigma$-space}\ar@{=>}[d]&\mbox{cosmic}\ar@{=>}[l]\\
&\mbox{strictly}\atop\mbox{fragmentable}.
}
$$
\smallskip

Adding to this diagram the classes of Piotrowski and Maslyuchenko spaces, we get the following diagram (describing the main results proved in this paper). In this diagram by a {\em separably $CC$-Maslyuchenko space} we understand a topological space whose every separable subspace is $CC$-Maslyuchenko.

$$\xymatrix{
&\mbox{separably}\atop\mbox{$CC$-Maslyuchenko}\ar@{<=>}[rd]\\
\mbox{$\aleph_0$-monolithic}\ar@{=>}[ru]
&\mbox{strongly}\atop\mbox{$\sigma$-metrizable}\ar@{=>}[d]\ar@{=>}[r]\ar@{=>}[l]&
\mbox{$CC$-Maslyuchenko}\\
\mbox{$\aleph_0$-space}\ar@{=>}[r]\ar@{=>}[u]\ar@{=>}[d]&\mbox{$QC$-Maslyuchenko}\ar@{=>}[r]&\mbox{$KC$-Maslyuchenko}\ar@{=>}[d]\ar@{=>}[u]\\
\mbox{$\sigma$-space}\ar@{=>}[r]&\mbox{strongly}\atop\mbox{fragmentable}\ar@{=>}[r]&\mbox{Piotrowski}
}
$$
\smallskip

\smallskip

{\bf Acknowledgement.} The author would like to thank the anonymous referee of an initial version of this paper for valuable remarks, suggestions, and references, which resulted in splitting the initial paper into two parts. This paper is the second part of the splitted paper; the first part \cite{BM} is devoted to Piotrowski spaces and its results are used in this paper.

\section{$M$-stable classes of topological spaces}\label{s:stable}

In this section we discuss some stability properties of classes of topological spaces and introduce $M$-stable classes of topological spaces. In the next sections we shall prove that the  classes of Maslyuchenko spaces are $M$-stable.

One of operations preserving the classes of Maslyuchenko spaces is the operation of a Michael modification. A typical example of a Michael modification is the Michael line $\IR_{\IQ}$. This is the real line $\IR$ endowed with the topology $\tau=\{U\subset\IR:U\cap\IQ$ is open in $\IQ\}$. The Michael line is known in General Topology as an example of a hereditarily paracompact space with non-normal product $\IR_{\IQ}\times(\IR\setminus \IQ)$ with irrationals (see \cite[5.1.32]{En}). The Michael line is submetrizable but fails to be a $\sigma$-space. The construction of the Michael line can be generalized as follows.

Let $g:L\to X$ be a continuous map from a locally compact paracompact space $(L,\tau_L)$ into a topological space $(X,\tau_X)$ such that $L\cap (X\setminus g(L))=\emptyset$. The set $X_g=L\cup (X\setminus g(L))$ endowed with the topology $\tau$ generated by the base $\tau_L\cup\{g^{-1}(U)\cup (U\setminus g(L)):U\in\tau_X\}$ is called {\em the Michael modification} of the space $X$ by the map $g$.

A closed subset $A$ of a topological space $X$ is called
\begin{itemize}
\item a {\em $\bar G_\delta$-set} in $X$ if $A=\bigcap_{n\in\w}\bar U_n$ for some sequence $(U_n)_{n\in\w}$ of open sets in $X$;
\item a {\em retract} of $X$ if there exists a continuous map $r:X\to A$ such that $r(a)=a$ for all $a\in A$;
\item a {\em neighborhood retract} in $X$ is $A$ is a retract of some closed neighborhood $\bar U$ of $A$ in $X$.
\end{itemize}

We shall say that a class $\C$ of topological spaces is closed under taking
\begin{itemize}
\item {\em  $\bar\sigma$-sums} if a topological space $X$ belongs to the class $\C$ provided $X=\bigcup_{n\in\w}X_n$ for some sequence $(X_n)_{n\in \w}$ of closed subsets $X_n\subset X$ that belong to the class $\C$;
\item {\em  strong $\bar\sigma$-sums} if a topological space $X$ belongs to the class $\C$ provided $X=\bigcup_{n\in\w}X_n$ for some increasing sequence $(X_n)_{n\in \w}$ of closed subsets $X_n\in\C$ of $X$ such that each convergent sequence in $X$ is contained in some set $X_n$;
\item {\em $\bar G_\delta$-retral unions} if a topological space $X$ belongs to the class $\C$ provided $X=X_1\cup\dots\cup X_n$ for some $\bar G_\delta$-sets $X_1,\dots,X_n\subset X$ that belong to the class $\C$ and are neighborhood retracts in $X$;
\item {\em fibered preimages} if a topological space $X$ belongs to the class $\C$ whenever $X$ admits a continuous map $f:X\to Y$ into a paracompact space $Y\in\C$ such that each point $y\in Y$ has a neighborhood $O_y\subset Y$ such that $f^{-1}(O_y)$ belongs to the class $\C$;
\item {\em Michael modifications} if $\C$ contains the Michael modification $X_g$ of any Hausdorff space $X\in\C$ by a map $g:L\to X$ defined on a locally compact paracompact space $L\in\C$.
\end{itemize}








The definition of an $M$-stable class includes also the stability under taking certain function spaces. For two topological spaces $X,Y$ by $C(X,Y)$ we denote the set of continuous functions from $X$ to $Y$.
Any family $\K$ of compact subsets of $X$ induces a topology $\tau_\K$ (called {\em the $\K$-open topology})  on $C(X,Y)$, which is generated by the subbase consisting of the sets $$[K,U]=\{f\in C(X,Y):f(K)\subset U\}$$where $K\in\K$ and $U$ is an open set in $Y$. The function space $C(X,Y)$ endowed with the $\K$-open topology will be denoted by $C_\K(X,Y)$. If the family $\K$ coincides with the family of all compact (finite) subsets of $X$, then the topology $\tau_\K$ will be denoted by $\tau_k$ (resp. $\tau_p$) and the function space $C_\K(X,Y)$ will be denoted by $C_k(X,Y)$ (resp. $C_p(X,Y)$).  For a subspace $Z\subset C(X,Y)$ by $\tau_p|Z$ and $\tau_k|Z$ we denote the topologies on $Z$ induced by the topologies $\tau_p$ and $\tau_k$, respectively.

A topology $\tau$ on a subset $Z\subset C(X,Y)$ is called
\begin{itemize}
\item {\em admissible} if $\tau_k|Z\subset \tau\subset\tau_p|Z$;
\item {\em sequentially admissible} if it is admissible and the evaluation function $e:Z\times X\to Y$, $e:(f,x)\mapsto f(x)$, is sequentially continuous.
\end{itemize}

We recall that a function $f:X\to Y$ between topological spaces is called {\em sequentially continuous} if for any sequence $(x_n)_{n\in\w}$ convergent to a point $x_\infty$ in $X$ the sequence $(f(x_n))_{n\in\w}$ converges to the point $f(x_\infty)$ in $Y$. It can be shown that a function $f:X\to Y$ is sequentially continuous if and only if for any compact metrizable subset $K\subset X$ the restriction $f|K$ is continuous.

The following lemma yields many examples of sequentially admissible topologies on function spaces.

\begin{lemma}\label{l:C_K} Let $X$ be a topological space and $\K$ be a family of compact subsets of $X$, which includes all compact sets of the form $\{\lim x_n\}\cup\{x_n\}_{n\in\w}$ where $(x_n)_{n\in\w}$ is a convergent sequence in $X$. For any topological space $Y$ the $\K$-open topology $\tau_\K$ on the function space $C(X,Y)$ is sequentially admissible.
\end{lemma}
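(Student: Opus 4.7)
The plan is to verify the two requirements in the definition of sequentially admissible: first, that $\tau_\K$ is admissible, i.e.\ sandwiched between the point-open and compact-open topologies on $C(X,Y)$; and second, that the evaluation map $e\colon C_\K(X,Y)\times X\to Y$, $(f,x)\mapsto f(x)$, is sequentially continuous.

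For admissibility, I would first use the hypothesis on $\K$ to extract singletons. Applying the assumption to a constant sequence $x_n\equiv x$ shows that $\{x\}\in\K$ for every $x\in X$, so every subbasic open set $[\{x\},U]$ of the point-open topology $\tau_p$ is also subbasic for $\tau_\K$. Together with the inclusion $\tau_\K\subset\tau_k$, which is immediate from the fact that every member of $\K$ is compact, this gives the admissibility $\tau_p\subset\tau_\K\subset\tau_k$.

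For sequential continuity of $e$, suppose $(f_n,x_n)_{n\in\w}$ is a sequence in $C_\K(X,Y)\times X$ converging to $(f_\infty,x_\infty)$, and let $U\subset Y$ be an open neighborhood of $f_\infty(x_\infty)$. Using continuity of $f_\infty$ at $x_\infty$, I would choose an open neighborhood $W\ni x_\infty$ with $f_\infty(W)\subset U$, and then pick $N\in\w$ with $x_n\in W$ for all $n\geq N$. The crucial move is to invoke the hypothesis on $\K$: the compact set $K=\{x_\infty\}\cup\{x_n:n\geq N\}$, being of the prescribed form (a convergent sequence together with its limit), belongs to $\K$. Moreover $f_\infty(K)\subset U$, so $f_\infty\in[K,U]$, which is a subbasic open set of $\tau_\K$. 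Since $f_n\to f_\infty$ in $\tau_\K$, there is $M\geq N$ with $f_n\in[K,U]$ for all $n\geq M$, and for such $n$ we have $x_n\in K$, hence $f_n(x_n)\in f_n(K)\subset U$. This shows $f_n(x_n)\to f_\infty(x_\infty)$.

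This is more a careful verification than a deep argument; the only subtle point is the choice of the compact set $K$, which must simultaneously (i) contain $x_n$ for all large $n$, so that membership of $f_n$ in $[K,U]$ forces $f_n(x_n)\in U$, and (ii) be mapped into $U$ by $f_\infty$, so that $f_\infty$ itself lies in $[K,U]$. The hypothesis that $\K$ contains all compact sets of the form $\{\lim x_n\}\cup\{x_n\}_{n\in\w}$ is precisely tailored to make such a $K$ available, and this is where the full strength of the assumption is used.
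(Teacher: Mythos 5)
Your proof is correct and follows essentially the same route as the paper: the key step is the same choice of compact set $K=\{x_\infty\}\cup\{x_n:n\ge N\}$ (tail of the sequence past the point where $f_\infty$ maps it into $U$, together with the limit), followed by the same use of $[K,U]$ as a $\tau_\K$-neighborhood of $f_\infty$. The only divergence is cosmetic: you record the admissibility chain in the mathematically standard direction $\tau_p\subset\tau_\K\subset\tau_k$ (and nicely justify $\tau_p\subset\tau_\K$ via constant sequences, which the paper leaves as ``clear''), whereas the paper writes the inclusions as $\tau_k\subset\tau_\K\subset\tau_p$ to match its own (apparently transposed) statement of the definition of an admissible topology; the intended content is identical.
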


\begin{proof} It is clear that $\tau_k\subset\tau_\K\subset\tau_p$. It remains to check that for any  function sequence $(f_n)_{n\in\w}$ convergent to a function $f$ in the function space $C_\K(X,Y)$ and any sequence $(x_n)_{n\in\w}$ convergent to a point $x$ in the space $X$, the sequence $\big(f_n(x_n)\big)_{n\in\w}$ converges to $f(x)$ in $Y$. Take any open neighborhood $O_{f(x)}\subset Y$ of $f(x)$. By the continuity of $f$, there is a number $n_0\in\w$ such that $\{f(x_n)\}_{n\ge n_0}\subset O_{f(x)}$. Since the compact set $K=\{x\}\cup\{x_n\}_{n\ge n_0}$ belongs to the family $\K$, the set $[K;O_{f(x)}]=\{g\in C(X,Y):g(K)\subset O_{f(x)}\}$ is an open neighborhood of $f$ in the function space $C_\K(X,Y)$. The convergence of the sequence $(f_n)_{n\in\w}$ to $f$ yields a number $n_1\ge n_0$ such that $\{f_n\}_{n\ge n_1}\subset [K,O_{f(x)}]$. Then for every $n\ge n_1$ we get $f_n(x_n)\in f_n(K)\subset O_{f(x)}$, which means that the sequence $(f_n(x_n))_{n\in\w}$ converges to $f(x)$.
\end{proof}

We shall say that a class $\C$ of topological spaces is {\em closed under taking sequentially admissible function spaces} if for any $\aleph_0$-space $X$ and space $Y\in\C$, every subspace $Z\subset C(X,Y)$ endowed with an sequentially admissible topology belongs to the class $\C$.

Now we are able to define $M$-stable classes of topological spaces.

\begin{definition} A class $\C$ of topological spaces is defined to be {\em $M$-stable} if
$\C$ contains all metrizable spaces and is closed under taking subspaces, homeomorphic images, countable Tychonoff products, fibered preimages, strong $\bar\sigma$-sums, $\bar G_\delta$-retral unions, Michael modifications, and sequentially admissible function spaces.
\end{definition}

Lemma~\ref{l:C_K} implies the following proposition.

\begin{proposition} If a class $\C$ is $M$-stable, then for each $\aleph_0$-space $X$, each family $\K$ of compact subsets containing  all non-empty compact subsets with at most one non-isolated point in $X$, and each space $Y\in\C$, the function space $C_\K(X,Y)$ belongs to the class $\C$.
\end{proposition}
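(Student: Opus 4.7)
The plan is to reduce the statement directly to Lemma~\ref{l:C_K} combined with the closure of $M$-stable classes under sequentially admissible function spaces. More precisely, I want to check that the hypothesis of Lemma~\ref{l:C_K} is satisfied for the given $\K$, which will yield that the $\K$-open topology $\tau_\K$ on $C(X,Y)$ is sequentially admissible; then the definition of $M$-stability will immediately produce $C_\K(X,Y)\in\C$.

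The only point requiring verification is that $\K$ contains every compact set of the form $K_0=\{x_\infty\}\cup\{x_n:n\in\w\}$ arising from a convergent sequence $(x_n)_{n\in\w}\to x_\infty$ in $X$. For this I would observe that such a $K_0$ has at most one non-isolated point (namely, possibly $x_\infty$). Indeed, since $X$ is an $\aleph_0$-space, it is regular and in particular $T_1$. Fix any index $k$ with $x_k\ne x_\infty$, and pick an open neighborhood $U\subset X$ of $x_k$ with $x_\infty\notin U$. As $x_n\to x_\infty$, the set $\{n:x_n\in U\}$ is finite, and by $T_1$ separation we may shrink $U$ further to avoid those finitely many extra terms. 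Then $U\cap K_0=\{x_k\}$, so $x_k$ is isolated in $K_0$. Therefore $K_0$ has at most one non-isolated point, and consequently $K_0\in\K$ by hypothesis.

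With the hypothesis of Lemma~\ref{l:C_K} verified, that lemma gives that $\tau_\K$ is a sequentially admissible topology on the function space $C(X,Y)$. Since $\C$ is $M$-stable, it is closed under taking sequentially admissible function spaces, so the subspace $Z=C(X,Y)\subset C(X,Y)$ carrying the sequentially admissible topology $\tau_\K$—which is precisely $C_\K(X,Y)$—belongs to $\C$. No substantial obstacle appears: the proposition is a direct corollary of Lemma~\ref{l:C_K} together with the definition of $M$-stability, the only hand-done verification being the small topological observation above about isolated points in $K_0$.
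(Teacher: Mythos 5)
Your proof is correct and follows exactly the route the paper intends: the paper simply states that Lemma~\ref{l:C_K} implies the proposition, and your write-up supplies the two details left implicit there, namely that a convergent sequence together with its limit has at most one non-isolated point (so the hypothesis on $\K$ applies) and that $M$-stability then yields $C_\K(X,Y)\in\C$ via closure under sequentially admissible function spaces. The only cosmetic remark is that to conclude that only finitely many terms $x_n$ lie in your neighborhood $U$ of $x_k$ you should separate $x_k$ from $x_\infty$ by \emph{disjoint} open sets (available since an $\aleph_0$-space is regular), rather than merely requiring $x_\infty\notin U$.
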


The following important theorem yields a ``lower bound'' on $M$-stable classes.

\begin{theorem}\label{t:aleph0} Each $M$-stable class $\C$ of topological spaces contains all $\aleph_0$-spaces.
\end{theorem}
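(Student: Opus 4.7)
The plan is to embed every $\aleph_0$-space $Z$ as a subspace of a function space of the form $C_k(X,\IR)$ for some $\aleph_0$-space $X$, and then invoke the $M$-stability axioms. The starting point is that $\IR\in\C$ because $\IR$ is metrizable.

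Given any $\aleph_0$-space $X$, the family $\K$ of all compact subsets of $X$ contains in particular all compact subsets with at most one non-isolated point, so by Lemma \ref{l:C_K} the compact-open topology on $C(X,\IR)$ is sequentially admissible. The $M$-stability closure under sequentially admissible function spaces, applied with this $X$ and with $Y:=\IR\in\C$, therefore places $C_k(X,\IR)$ into $\C$; crucially, the function-space axiom requires only that $X$ be an $\aleph_0$-space, not that $X$ itself already belong to $\C$. I then invoke Michael's classical embedding theorem \cite{Mich}: every $\aleph_0$-space $Z$ is topologically embeddable as a subspace of $C_k(X,\IR)$ for some $\aleph_0$-space $X$ (in fact $X$ may be chosen to be a separable metric space, which is automatically $\aleph_0$ because any countable topological base is a countable $k$-network). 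The closure of $\C$ under subspaces then yields $Z\in\C$.

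The main obstacle is Michael's embedding theorem itself. Its proof builds a countable family of continuous functions $Z\to[0,1]$ from a countable closed $k$-network of $Z$, using the regularity of $Z$ to separate each network element from the complement of a larger one by a Urysohn-type function; the diagonal of this family carries $Z$ into an appropriate $C_k(X,\IR)$, and the $k$-network property (rather than the weaker network property) is precisely what guarantees the resulting map is a topological embedding and not merely a continuous injection. All remaining steps are routine applications of the $M$-stability axioms to a single function space, and the Michael-modification, fibered-preimage, strong $\bar\sigma$-sum, and $\bar G_\delta$-retral-union axioms are not needed for this particular theorem; they will be used elsewhere in the paper to show the various Maslyuchenko classes are themselves $M$-stable.
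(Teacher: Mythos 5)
Your overall strategy --- realize $Z$ inside a function space $C(X,\IR)$ over an $\aleph_0$-space $X$, use $\IR\in\C$ together with the sequentially-admissible-function-space axiom, then pass to a homeomorphic copy --- is exactly the paper's, and your observation that this axiom does not require $X\in\C$ is correct and is exploited in the same way there. The gap is the pivotal embedding step. There is no ``Michael embedding theorem'' asserting that every $\aleph_0$-space embeds topologically into $C_k(X,\IR)$ \emph{with the compact-open topology} for some separable metric (or $\aleph_0$-) space $X$; Michael's theorems are that the $\aleph_0$-spaces are precisely the compact-covering images of separable metric spaces, and that $C_k(X,Y)$ is an $\aleph_0$-space when $X$ and $Y$ are. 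Your sketched proof cannot be right as stated: the diagonal of a countable family of maps $Z\to[0,1]$ lands in the metrizable cube $[0,1]^\w$, so if it were an embedding every $\aleph_0$-space would be metrizable, which is false. Moreover the natural precise candidate, the second-dual map $\delta_Z:Z\to C_k(C_k(Z,\IR),\IR)$, genuinely fails to be continuous for some $\aleph_0$-spaces: its continuity at a point would force every compact subset of $C_k(Z,\IR)$ to be evenly continuous, and for the Arens--Fort space $Z$ (a countable $\aleph_0$-space in which all compact sets are finite, so that $C_k(Z,\IR)=C_p(Z,\IR)$) the pointwise-convergent sequence of characteristic functions of the columns, together with its limit $0$, is a compact set witnessing the failure of even continuity at the non-isolated point.

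The paper circumvents precisely this obstruction, and this is the reason the $M$-stability axiom is formulated for arbitrary \emph{sequentially admissible} topologies rather than for the compact-open topology alone. One takes $X=C_k(Z,\IR)$, which is an $\aleph_0$-space by Michael's theorem, maps $Z$ injectively into $C(X,\IR)$ by the Dirac map $\delta_Z$, and equips the image $Z'=\delta_Z(Z)$ with the topology $\tau$ transported from $Z$. One then only has to verify the two inclusions $\tau_k|Z'\subset\tau\subset\tau_p|Z'$ (the upper one from the continuity of $\delta_Z$ into $C_p(X,\IR)$, the lower one from the continuity of $\delta_Z^{-1}$ on $(Z',\tau_k|Z')$) and the sequential continuity of the evaluation map, which follows from Lemma~\ref{l:C_K}; this is strictly weaker than the equality $\tau=\tau_k|Z'$ that your argument would need, and it is all the axiom demands. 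If you replace the appeal to the nonexistent embedding theorem by this admissible-topology argument, your proof becomes the paper's.
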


\begin{proof} We should prove that any $\aleph_0$-space $Z$ belongs to the class $\C$. By Michael's Theorem \cite{Mich} (see also \cite[11.5]{Grue}) the function space $C_k(Z):=C_k(Z,\IR)$ is an $\aleph_0$-space. Consider the canonical map $\delta_Z:Z\to C_k(C_k(Z))$ assigning to each point $z\in Z$ the Dirac measure concentrated at $z$. Taking into account that the space $Z$ is Tychonoff, we can prove that the map $\delta_Z$ is injective and the inverse map $\delta_Z^{-1}:\delta_Z(Z)\to Z$ is continuous. Consider the space $Z'=\delta_Z(Z)\subset C(C_k(Z))$ endowed with the topology turning the map  $\delta_Z:Z\to Z'$ into a homeomorphism. The continuity of the map $\delta_Z:Z\to C_p(C_k(X))$ implies that $\tau\subset\tau_p|Z'$ and the continuity of the map $\delta_Z^{-1}:(Z',\tau_k|Z')\to Z$ implies that $\tau_k|Z'\subset\tau$. Therefore, the topology $\tau$ on $Z'$ is admissible. Lemma~\ref{l:C_K} implies that the map $e:Z\times C_k(Z)\to\IR$, $e:(z,f)\mapsto f(z)$, is sequentially continuous, which means that the topology $\tau$ on $Z'$ is sequentially admissible.

 Taking into account that the class $\C$ contains the real line and $\C$ is closed under taking sequentially admissible function spaces, we conclude that the space $Z'\subset C(C_k(Z),\IR)$ endowed with the sequentially admissible topology $\tau$ belongs to the class $\C$. Since $\C$ is closed under homeomorphic images, the space $Z$ belongs to the class $\C$, too.
 \end{proof}

Now we establish some properties of classes that are closed under taking fibered preimages.

\begin{proposition}\label{p:topsum} If a class $\C$ of topological spaces contains all discrete spaces and is closed under taking fibered preimages, then $\C$ is closed under topological sums.
\end{proposition}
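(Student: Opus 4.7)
The plan is to reduce a topological sum to a single application of the fibered preimage closure property. Given a topological sum $X=\bigsqcup_{i\in I}X_i$ with each $X_i\in\C$, I would let $Y$ be the index set $I$ equipped with the discrete topology and define $f:X\to Y$ to be the canonical projection sending every point of the summand $X_i$ to $i$. Since each $X_i$ is clopen in $X$ and $Y$ is discrete, $f$ is automatically continuous.

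Next I would verify the three ingredients required by the definition of closure under fibered preimages. First, $Y$ is paracompact, because every discrete space is paracompact (any open cover is refined by the partition into singletons, which is locally finite). Second, $Y\in\C$ by the hypothesis that $\C$ contains all discrete spaces. Third, for each $i\in Y$ the singleton $\{i\}$ is an open neighborhood of $i$ in $Y$, and its preimage $f^{-1}(\{i\})=X_i$ lies in $\C$ by assumption. Hence the hypotheses for fibered preimages are met, and we conclude $X\in\C$.

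I do not expect any real obstacle: the proposition is essentially built into the definition of fibered preimage, with the discrete index space playing the role of the ``base''. The only thing worth noting is that this argument tacitly assumes topological sums are indexed by a set (so that the discrete space $I$ is a legitimate object of $\C$), which is the standard convention in this paper.
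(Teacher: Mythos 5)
Your proof is correct and follows essentially the same route as the paper: map the sum onto its discrete index set and invoke closure under fibered preimages, using the singletons as the required neighborhoods. Your explicit check that the discrete base is paracompact is a detail the paper leaves implicit, but otherwise the arguments coincide.
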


\begin{proof} Assume that a topological space $X$ is a topological sum $X=\bigoplus_{\alpha\in A}X_\alpha$ of clopen subspaces $X_\alpha\in\C$ of $X$. Endow the index set $A$ with the discrete topology and consider the continuous map $f:X\to A$ assigning to each point $x\in X$ the unique index $\alpha\in A$ such that $x\in X_\alpha$. By our assumption, the discrete space $A$ belongs to the class $\C$. Also for every $\alpha\in A$ the preimage $f^{-1}(\{\alpha\})=X_\alpha$ belongs to the class $\C$. Taking into account that the class $\C$ is closed under fibered preimages, we conclude that $X\in\C$.
\end{proof}

A topological space $X$ is called a {\em $k_\w$-space} if $X$ has a countable cover $\K$ by compact subspaces such that a subset $F\subset X$ is closed in $X$ if and only if for every $K\in\K$ the intersection $F\cap K$ is compact.

\begin{proposition}\label{p:komega} Assume that a class $\C$ of topological spaces contains all compact metrizable spaces and is closed under taking strong $\bar\sigma$-sums. Then $\C$ contains any cosmic $k_\w$-space.
\end{proposition}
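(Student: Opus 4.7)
\medskip
\noindent\textbf{Proof proposal for Proposition~\ref{p:komega}.}
The plan is to realize any cosmic $k_\w$-space $X$ as a strong $\bar\sigma$-sum of compact metrizable subspaces and then invoke the closure of $\C$ under strong $\bar\sigma$-sums.

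First I would fix a countable compact cover $\{C_n\}_{n\in\w}$ of $X$ witnessing that $X$ is a $k_\w$-space, and replace it by the increasing sequence $K_n=\bigcup_{i\le n}C_i$; since finite unions of compact sets are compact, each $K_n$ is compact, and the determining property of the cover is preserved. Because $X$ is regular and each $K_n$ is compact, every $K_n$ is closed in $X$, and clearly $X=\bigcup_{n\in\w}K_n$. The cosmicity of $X$ is inherited by subspaces, so each $K_n$ is a compact (Hausdorff) space with a countable network; by the classical theorem that compact Hausdorff spaces with countable network are metrizable, each $K_n$ is compact metrizable, and hence $K_n\in\C$ by the first hypothesis on $\C$.

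The key remaining step is to verify the fourth condition in the definition of a strong $\bar\sigma$-sum, namely that every convergent sequence in $X$ is contained in some $K_n$. I would prove the stronger (and standard) statement that in a $k_\w$-space with increasing determining sequence $(K_n)_{n\in\w}$, every compact subset $A\subset X$ is contained in some $K_n$. Suppose not, and for each $n\in\w$ pick $x_n\in A\setminus K_n$. For the set $F=\{x_n:n\in\w\}$, the intersection $F\cap K_m$ is finite for every $m$, so it is compact; hence, by the $k_\w$-property, $F$ is closed in $X$. The same argument applied to any subset of $F$ shows that $F$ is discrete and closed in $X$. But $F$ is an infinite subset of the compact set $A$, which forces $F$ to have an accumulation point in $A\subset X$, a contradiction. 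Applying this to the compact set $\{x_\infty\}\cup\{x_k\}_{k\in\w}$ arising from a convergent sequence $x_k\to x_\infty$ in $X$ gives condition~(4).

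With conditions (1)--(4) in place, the representation $X=\bigcup_{n\in\w}K_n$ exhibits $X$ as a strong $\bar\sigma$-sum of members of $\C$, so $X\in\C$ by the second hypothesis on $\C$. The main obstacle is the auxiliary claim about compact subsets of $k_\w$-spaces above; everything else is a direct unpacking of the definitions.
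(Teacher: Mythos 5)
Your proposal is correct and follows essentially the same route as the paper: pass to the increasing compact cover $K_n=\bigcup_{i\le n}C_i$, note each $K_n$ is compact cosmic and hence metrizable, and invoke closure under strong $\bar\sigma$-sums. The only difference is that you supply a full proof of the step the paper dismisses with ``it can be shown'' (that every convergent sequence, indeed every compact set, lies in some $K_n$), and your closed-discrete argument for that step is sound.
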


\begin{proof}  Given a cosmic $k_\w$-space $X$ find a sequence $(K_n)_{n\in\w}$ of compact sets in $X$ generating the topology of $X$ in the sense that a subset $F\subset X$ is closed if and only if for every $n\in\w$ the intersection $F\cap K_n$ is compact. Replacing each set $K_n$ by the union $\bigcup_{i\le n}K_i$, we can assume that $K_n\subset K_{n+1}$ for all $n\in\w$. Each compact space $K_n$, being cosmic, is metrizable and hence belongs to the class $\C$. It can be shown that each convergent sequence in $X$ is contained in some compact set $K_n$.
Then the class $\C$, being closed under strong $\bar\sigma$-sums, contains the space $X$.
\end{proof}

A cover $\U$ of a set $X$ is called {\em star-countable} if for each set $U\in\U$ the family $\{V\in\U:V\cap U\ne\emptyset\}$ is at most countable.

\begin{proposition}\label{p:star} Assume that a class $\C$ of topological spaces contains all locally compact metrizable spaces and is closed under taking subspaces, strong $\bar\sigma$-sums, and fibered preimages. A paracompact space $X$ belongs to the class $\C$ if and only if $X$ has a star-countable cover by open subspaces that belong to the class $\C$.
\end{proposition}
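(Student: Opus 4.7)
The ``only if'' direction is immediate by taking the one-element cover $\{X\}$. For the substantive direction, suppose $\U$ is a star-countable open cover of the paracompact space $X$ by elements of $\C$. The plan is to first decompose $X$ into clopen Lindel\"of pieces via a chain-connectivity argument, and then, for each piece, to build a fibered-preimage setup over the geometric realization of the nerve of a star-finite refinement.

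For the decomposition, I would declare $U\sim V$ when there is a finite chain $U=U_0,\ldots,U_n=V$ in $\U$ with $U_{i-1}\cap U_i\ne\emptyset$ for each $i$. Star-countability forces each equivalence class $\U_\alpha$ to be countable, so the sets $X_\alpha:=\bigcup\U_\alpha$ are open, pairwise disjoint (any meeting members would merge the classes), and hence clopen, yielding a topological sum $X=\bigoplus_\alpha X_\alpha$. Since $\C$ contains all discrete spaces (as subspaces of locally compact metrizable spaces), Proposition~\ref{p:topsum} gives closure of $\C$ under topological sums, so it suffices to prove $X_\alpha\in\C$ for each $\alpha$.

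Fix $\alpha$. Being clopen in $X$, the space $X_\alpha$ is paracompact Hausdorff, hence regular, and the countable cover $\U_\alpha$ makes it Lindel\"of. A classical theorem states that every regular Lindel\"of space is strongly paracompact, so $\U_\alpha$ has a star-finite open refinement from which, by Lindel\"ofness, a countable star-finite refinement $\W=\{W_n\}_{n\in\w}$ can be extracted. Each $W_n$ is a subspace of some element of $\U_\alpha$, so $W_n\in\C$. The nerve $N(\W)$ is then a countable locally finite abstract simplicial complex (here is where star-finiteness is used), so its geometric realization $|N(\W)|$ is a locally compact metrizable space and lies in $\C$ by hypothesis.

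I would then pick a partition of unity $\{\lambda_n\}_{n\in\w}$ subordinate to $\W$ (which exists by paracompactness of $X_\alpha$) and define a continuous map $\phi:X_\alpha\to|N(\W)|$ by $\phi(x)=\sum_n\lambda_n(x)\cdot v_n$, where $v_n$ denotes the vertex corresponding to $W_n$. For any $p\in|N(\W)|$ one can pick an index $n$ with $p$ in the open star $\mathrm{st}(v_n)$, and the preimage $\phi^{-1}(\mathrm{st}(v_n))=\{x\in X_\alpha:\lambda_n(x)>0\}$ is contained in $W_n$ and therefore in $\C$. The fibered-preimages axiom now delivers $X_\alpha\in\C$. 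The main obstacle is securing local finiteness of the nerve, so that $|N(\W)|$ is locally compact metrizable (and hence in $\C$); this forces the use of star-finite, not merely locally finite, refinements, and is precisely the reason the preliminary reduction to Lindel\"of pieces via the chain decomposition is essential.
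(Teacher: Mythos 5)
There is a genuine gap at the pivot of your argument: the claim that ``the countable cover $\U_\alpha$ makes $X_\alpha$ Lindel\"of'' is false. A space admitting a countable open cover need not be Lindel\"of unless the members of that cover are themselves Lindel\"of, and nothing in the hypotheses forces the elements of $\C$ to be Lindel\"of (an uncountable discrete space $D$ with the cover $\{D\}$ already gives a single chain-component $X_\alpha=D$ with a one-element cover and no Lindel\"ofness). Since everything downstream --- the appeal to ``regular Lindel\"of $\Rightarrow$ strongly paracompact,'' the extraction of a \emph{countable} star-finite refinement $\W$, and hence the local finiteness and local compactness of $|N(\W)|$ --- rests on this claim, the proof as written collapses. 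A telling symptom is that your argument never uses the hypothesis that $\C$ is closed under strong $\bar\sigma$-sums; you would be proving a strictly stronger statement, which should have prompted a search for the error.

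The paper's proof avoids the issue by not attempting to make the nerve locally finite. It takes the nerve $N(\U)$ of the original star-countable cover; its geometric realization $Y$ splits (by the same chain argument, applied to the nerve) into a topological sum of realizations of \emph{countable} simplicial complexes, which are cosmic $k_\w$-spaces but in general not locally compact (e.g.\ infinitely many edges at one vertex). These are placed in $\C$ via Propositions~\ref{p:topsum} and \ref{p:komega}, and the latter is exactly where the strong $\bar\sigma$-sum hypothesis enters; the fibered-preimage step over the open stars then finishes as in your sketch. Your route could in principle be repaired --- one can show that every countable open cover of a paracompact space admits a star-finite (though not necessarily countable) open refinement, e.g.\ by mapping into the separable metrizable ``infinite simplex'' via a partition of unity and pulling back a star-finite refinement there, and the realization of an arbitrary locally finite complex is still locally compact and metrizable --- but that is a different argument from the one you gave, and the Lindel\"of step must be excised, not merely reworded.
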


\begin{proof} The ``only if'' part is trivial. To prove the ``if'' part, assume that $\U$ is a star-countable cover of $X$ by open subspaces that belong to the class $\C$. Let $N(\U)$ be the nerve of the cover $\U$. This is a simplicial complex whose vertices are elements of the cover $\U$ and a finite subset $\F\subset\U$ is a simplex of $N(\U)$ iff $\bigcap\F\ne\emptyset$.
By the paracompactness of $X$, there is a partition of the unit $(\lambda_U:X\to[0,1])_{U\in\U}$ subordinated to the cover $\U$. This means that $\lambda_{U}^{-1}\big((0,1]\big)\subset U$ for all $U\in\mathcal U$, $\big(\lambda_U^{-1}((0,1])\big)_{U\in\U}$ is a locally finite cover of $X$, and $\sum_{U\in\U}\lambda_U\equiv 1$. The family $(\lambda_U)_{U\in\U}$ determines a continuous map $\lambda:X\to Y$ to the geometric realization $Y$ of the nerve $N(\U)$. Since the cover $\U$ is star-countable, the space $Y$ is a topological sum of the geometric realizations of countable simplicial complexes. Since the geometric realization of any countable simplicial complex is a cosmic $k_\w$-space, we can apply Propositions~\ref{p:topsum} and \ref{p:komega} to conclude that the class $\C$ contains the space $Y$. Known properties of the canonical map $\lambda:X\to Y$ (see \cite[\S3.9]{Sak}) ensure that the space $Y$ has an open cover $\V$ such that for every $V\in\V$ the preimage $\lambda^{-1}(V)$ is contained in some set $U\in\U$ and hence belongs to the class $\C$. Since the class $\C$ is closed under fibered preimages, the space $X$ belongs to the class $\C$.
\end{proof}

We recall that a topological space $X$ is {\em strongly paracompact} if each open cover of $X$ has a locally finite star-countable refinement.

\begin{corollary}\label{c:cover} Assume that a class $\C$ of topological spaces contains all locally compact metrizable spaces and is closed under taking subspaces, strong $\bar\sigma$-sums, and fibered preimages. A strongly paracompact space $X$ belongs to the class $\C$ if and only if $X$ has a cover by open subspaces that belong to the class $\C$.
\end{corollary}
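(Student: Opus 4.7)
The plan is to reduce Corollary~\ref{c:cover} directly to Proposition~\ref{p:star}. The ``only if'' direction is automatic because, if $X\in\C$, then the singleton cover $\{X\}$ already witnesses the required property.

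For the ``if'' direction, suppose $X$ is strongly paracompact and that $\U$ is an open cover of $X$ whose members belong to $\C$. By the definition of strong paracompactness, $\U$ admits a locally finite, star-countable open refinement $\V$. First I would observe that every $V\in\V$ sits inside some $U_V\in\U$; since $V$ is open in $X$, it is also open in $U_V$, so $V$ is an open subspace of the class member $U_V\in\C$. The closure of $\C$ under subspaces then gives $V\in\C$ for every $V\in\V$. Thus $\V$ is a star-countable open cover of $X$ by members of $\C$. Since every strongly paracompact space is paracompact, the hypotheses of Proposition~\ref{p:star} are now met, and applying that proposition yields $X\in\C$.

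The argument is essentially bookkeeping: the substantive content lies in Proposition~\ref{p:star} (with its nerve-construction and invocation of Propositions~\ref{p:topsum} and \ref{p:komega}), while the only role of strong paracompactness is precisely to upgrade an arbitrary open cover by members of $\C$ to a \emph{star-countable} one. Accordingly, there is no real obstacle here; the only tiny point to check is that restricting the refinement members $V\in\V$ to the original cover members $U_V\in\U$ still produces subspaces in $\C$, which is immediate from subspace closure once one notes that $V$ is automatically open in $U_V$.
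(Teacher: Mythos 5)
Your proof is correct and follows exactly the route the paper intends: the corollary is stated without an explicit proof precisely because it reduces to Proposition~\ref{p:star} by using strong paracompactness to replace the given open cover with a locally finite star-countable open refinement, whose members lie in $\C$ by closure under subspaces. Nothing is missing.
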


We finish this section by proving that $M$-stable classes of topological spaces are closed under taking countable small-box products of $T_1$-pointed topological spaces. By a {\em pointed space} we understand a topological space $X$ with a distinguished point $*\in X$. A pointed space $(X,*)$ is called {$T_1$-pointed} if the singleton $\{*\}$ is closed in $X$. The {\em box-product} $\square_{\alpha\in A}X_\alpha$ of a family $(X_\alpha)_{\alpha\in A}$ of topological spaces is the Cartesian product $\prod_{\alpha\in A}X_\alpha$ endowed with the topology generated by the base consisting of the products $\prod_{\alpha\in A}U_\alpha$ of open sets $U_\alpha\subset X_\alpha$, $\alpha\in A$.
 For a family $(X_\alpha,*_\alpha)$, $\alpha\in A$, of pointed spaces the subspace
$$\cbox_{\alpha\in A}(X_\alpha,*_\alpha)=\{(x_\alpha)_{\alpha\in A}\in\square_{\alpha\in A}X_\alpha:\{\alpha\in A:x_\alpha\ne *_\alpha\}\mbox{ is finite}\}$$of $\square_{\alpha\in A}X_\alpha$ is called the {\em small box-product} of pointed spaces $(X_\alpha,*_\alpha)$, $\alpha\in A$.

We shall say that a topological space $X$ is {\em closed under taking small box-products} if for any sequence $(X_n,*_n)_{n\in\w}$ of $T_1$-pointed spaces that belong to the class $\C$ the small box-product $\cbox_{n\in\w}(X_n,*_n)$ belongs to the class $\C$.

\begin{proposition}\label{p:box} If a class $\C$ of topological spaces is closed under finite Tychonoff products and strong $\bar\sigma$-sums, then it is closed under taking small box-products.
\end{proposition}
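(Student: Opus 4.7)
The plan is to realize the small box-product as a strong $\bar\sigma$-sum of its natural finite-coordinate approximations and to invoke the two stability hypotheses. Write $X=\cbox_{n\in\w}(X_n,*_n)$ and, for each $k\in\w$, set
$$Y_k=\{(x_n)_{n\in\w}\in X:x_n=*_n\text{ for all }n>k\}.$$
The map $(x_n)_{n\le k}\mapsto (x_0,\dots,x_k,*_{k+1},*_{k+2},\dots)$ is a homeomorphism of $\prod_{n\le k}X_n$ onto $Y_k$ (both with the restricted box topology and with the product topology, which agree here because only finitely many coordinates are nonzero). Hence, by closure of $\C$ under finite Tychonoff products, $Y_k\in\C$. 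The sets $Y_k$ are clearly increasing with $\bigcup_{k\in\w}Y_k=X$.

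First I would check that each $Y_k$ is closed in $X$. If $(x_n)\notin Y_k$ pick some $m>k$ with $x_m\ne *_m$; since $(X_m,*_m)$ is $T_1$-pointed, $X_m\setminus\{*_m\}$ is open, so the set of points in $X$ whose $m$-th coordinate lies in $X_m\setminus\{*_m\}$ is a box-open neighborhood of $(x_n)$ disjoint from $Y_k$. Thus $Y_k$ is closed in the box product, hence in the subspace $X$.

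The main step is to show that every convergent sequence in $X$ is eventually contained in a single $Y_N$. Suppose $y^{(j)}\to y^{(\infty)}$ in $X$ and let $F=\supp(y^{(\infty)})$, which is finite. If no such $N$ exists, then $S:=\bigcup_{j}\supp(y^{(j)})$ must be infinite; otherwise $N=\max(S\cup F)$ would work. Using that each $y^{(j)}$ has finite support, a straightforward inductive choice produces subsequences $j_1<j_2<\cdots$ and $n_1<n_2<\cdots$ with $n_i>\max F$, $n_i\in\supp(y^{(j_i)})$, and (after possibly thinning) $j_i\to\infty$. Now define
$$V_n=\begin{cases}X_{n_i}\setminus\{y^{(j_i)}_{n_i}\}&\text{if }n=n_i\text{ for some }i,\\ X_n&\text{otherwise,}\end{cases}$$
which is open in $X_n$ since each $(X_n,*_n)$ is $T_1$-pointed. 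Then $V=\prod_{n\in\w}V_n$ is box-open, contains $y^{(\infty)}$ (because $y^{(\infty)}_{n_i}=*_{n_i}\ne y^{(j_i)}_{n_i}$), yet excludes every $y^{(j_i)}$. This contradicts $y^{(j)}\to y^{(\infty)}$, so some $N$ exists.

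The main obstacle is precisely this diagonalization step, which crucially relies on the $T_1$-pointed hypothesis (to make each $X_n\setminus\{y^{(j_i)}_{n_i}\}$ open) and on the box topology (so that the infinite product $V$ is open). Once this is established, the sequence $(Y_k)_{k\in\w}$ witnesses that $X$ is a strong $\bar\sigma$-sum of members of $\C$, and closure of $\C$ under strong $\bar\sigma$-sums yields $X\in\C$.
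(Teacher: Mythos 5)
Your decomposition is exactly the one the paper uses: write $\cbox_{n\in\w}(X_n,*_n)$ as the increasing union of the closed copies $Y_k\cong\prod_{n\le k}X_n$, check that every convergent sequence lies in some $Y_N$, and invoke closure under finite Tychonoff products and strong $\bar\sigma$-sums. The identification of $Y_k$ with a finite product, the verification that $Y_k$ is closed (which correctly uses that $X_m\setminus\{*_m\}$ is open), and the final appeal to the two closure hypotheses all match the paper, which dismisses the convergent-sequence step with ``it is standard to show''.

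The one place where your argument does not go through as written is precisely that ``standard'' step. You set $V_{n_i}=X_{n_i}\setminus\{y^{(j_i)}_{n_i}\}$ and claim this is open ``since $(X_{n_i},*_{n_i})$ is $T_1$-pointed''. But $T_1$-pointedness, as defined in the paper, only says that the singleton $\{*_{n_i}\}$ is closed, i.e.\ that $X_{n_i}\setminus\{*_{n_i}\}$ is open; it gives no information about the complement of the point $y^{(j_i)}_{n_i}$, which is a point \emph{different} from $*_{n_i}$. What the diagonalization needs is a neighborhood of $*_{n_i}$ that avoids $y^{(j_i)}_{n_i}$, and that is the opposite direction of separation. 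The distinction is not cosmetic: if each $X_n$ is the Sierpi\'nski doubleton $\{0,1\}$ with $\{1\}$ open and basepoint $*_n=0$, then each $(X_n,*_n)$ is $T_1$-pointed, yet the only neighborhood of the zero sequence in the small box-product is the whole space, so the unit vectors $e_j$ converge to $\mathbf{0}$ while their supports are unbounded; the lemma you are proving fails there. So either the factors must be assumed $T_1$ (in which case your argument, including the thinning that produces $j_1<j_2<\cdots$ and $n_1<n_2<\cdots$, is fine and the whole sequence, not just a tail, lies in one $Y_N$ because $\bigcup_j\supp(y^{(j)})$ is then finite), or some other justification is needed. Apart from this one point your proof is correct and structurally identical to the paper's.
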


\begin{proof} Take any sequence of $T_1$-pointed spaces $(X_n,*_b)$, $n\in\w$, that belong to the class $\C$. For every $m\in\w$ identify the finite product $\prod_{n<m}X_n$ with the closed subset
$$\{(x_n)\in\square_{n\in\w}X_n:\forall n\ge m\;\;x_n\ne *_n\}$$ of the small box-product $\cbox_{n\in\w}(X_n,*_n)$. It is standard to show that each convergent sequence in $\cbox_{n\in\w}X_n$ is contained in some finite product $\prod_{n<m}X_n$. Since the class $\C$ is closed under finite products, each space $\prod_{n<m}X_n$, $m\in\w$, belongs to the class $\C$. Since $\C$ is closed under taking strong $\bar\sigma$-sums, the small box-product $\cbox_{n\in\w}(X_n,*_n)=\bigcup_{m\in\w}\prod_{n<m}X_n$ belongs to the class $\C$.
\end{proof}

Propositions~\ref{p:topsum}, \ref{p:komega}, \ref{p:star}, \ref{p:box}, and Corollary \ref{c:cover} imply:

\begin{corollary} Assume that a class $\C$ of topological spaces is $M$-stable. Then
\begin{enumerate}
\item $\C$ contains all cosmic $k_\w$-spaces and is closed under taking topological sums and small box-products;
\item $\C$ contains a paracompact space $X$ if and only if $X$ admits a star-countable cover by open subspaces that belong to the class $\C$;
\item  $\C$ contains a strongly paracompact space $X$ if and only if $X$ admits a cover by open subspaces that belong to the class $\C$.
\end{enumerate}
\end{corollary}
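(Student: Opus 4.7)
The plan is to deduce each of the three assertions as a direct consequence of the structural propositions already proved in this section, after checking that $M$-stability supplies all the hypotheses those propositions need.

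First I would unpack what $M$-stability gives us for free. Since $\C$ contains all metrizable spaces, it contains in particular all discrete spaces, all compact metrizable spaces, and all locally compact metrizable spaces. Moreover, by definition $\C$ is closed under countable Tychonoff products (hence finite products), strong $\bar\sigma$-sums, fibered preimages, and subspaces. These are precisely the ingredients required by Propositions~\ref{p:topsum}, \ref{p:komega}, \ref{p:star}, \ref{p:box}, and Corollary~\ref{c:cover}.

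For (1), the cosmic $k_\w$-case follows from Proposition~\ref{p:komega}, since $\C$ contains the compact metrizable spaces and is closed under strong $\bar\sigma$-sums. Closure under topological sums follows from Proposition~\ref{p:topsum}, using that $\C$ contains all discrete spaces and is closed under fibered preimages. Closure under small box-products follows from Proposition~\ref{p:box}, using closure under finite Tychonoff products (a special case of countable products) and under strong $\bar\sigma$-sums. Thus all three conclusions of (1) are immediate.

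For (2) and (3), both directions of the equivalences are straightforward. The ``only if'' parts are trivial (take the cover consisting of $X$ itself, which is star-countable, respectively a singleton cover). For the ``if'' directions, assertion (2) is exactly the content of Proposition~\ref{p:star}, and assertion (3) is exactly the content of Corollary~\ref{c:cover}; in each case the hypotheses ``$\C$ contains all locally compact metrizable spaces'' and ``$\C$ is closed under subspaces, strong $\bar\sigma$-sums, and fibered preimages'' are supplied by $M$-stability. There is no substantive obstacle here: the whole point of the preceding development is that these propositions have been set up so that an $M$-stable class automatically inherits all the listed stability properties, and the corollary is just the bookkeeping step that records this.
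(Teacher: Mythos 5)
Your proposal is correct and follows exactly the paper's route: the paper derives this corollary directly from Propositions~\ref{p:topsum}, \ref{p:komega}, \ref{p:star}, \ref{p:box} and Corollary~\ref{c:cover}, with the hypotheses of each supplied by $M$-stability just as you verify. Your write-up merely makes explicit the bookkeeping that the paper leaves implicit.
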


\section{$QC$-Maslyuchenko spaces}\label{s:QC}

In this section we study $QC$-Maslyuchenko spaces and show that the class of such spaces is $M$-stable. We remind that a topological space $Z$ is called {\em $QC$-Maslyuchenko} if for each $QC$-function $f:X\times Y\to Z$ defined on the product of a topological space $X$ and a second-countable space $Y$ the set $D(f)$ of discontinuity points of $f$ is contained in $M\times Y$ for some meager set $M\subset X$.

\begin{theorem}\label{t:QC-stable}
The class of $QC$-Maslyuchenko spaces is $M$-stable.
\end{theorem}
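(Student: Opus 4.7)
The plan is to verify each of the closure axioms in the definition of an $M$-stable class separately, invoking Theorem~\ref{t:QC} (Bouziad--Troallic) for the base case that metrizable spaces are $QC$-Maslyuchenko. Three axioms are essentially formal: for a subspace $Z_0\subset Z$, composing any $QC$-function $f:X\times Y\to Z_0$ with the inclusion $Z_0\hookrightarrow Z$ preserves both the $QC$-property (open neighbourhoods in $Z_0$ come from open neighbourhoods in $Z$) and the set $D(f)$; closure under homeomorphic images is trivial; and for a countable product $Z=\prod_{n\in\w}Z_n$ each coordinate $\pi_n\circ f$ is $QC$ and $D(f)=\bigcup_n D(\pi_n\circ f)$, so the projection on $X$ is a countable union of meager sets.

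For strong $\bar\sigma$-sums $Z=\bigcup_{n\in\w}Z_n$, the sequential condition (every convergent sequence lies in some $Z_n$), combined with continuity of each $f^x$ and the second-countability of $Y$, allows one to split the discontinuity analysis index-by-index and apply the $QC$-Maslyuchenko property of each $Z_n$, producing a countable union of meager projections. For $\bar G_\delta$-retral unions $Z=Z_1\cup\dots\cup Z_k$ with neighbourhood retractions $r_i:\bar U_i\to Z_i$, the $QC$-function $f$ can be decomposed along this cover: each $r_i\circ f|_{f^{-1}(\bar U_i)}$ is a $QC$-function into the $QC$-Maslyuchenko space $Z_i$, and the $\bar G_\delta$-condition allows one to recover $D(f)$ from these pieces up to a meager set. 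In the fibered preimage case, $g=h\circ f$ is a $QC$-function into $Z'\in QC$-Maslyuchenko (composition with the continuous $h$ preserves lower quasicontinuity), so $M_0=\pr_X D(g)$ is meager; for $x_0\notin M_0$ and any $y_0\in Y$, continuity of $g$ at $(x_0,y_0)$ yields a product neighbourhood $U\times V$ with $f(U\times V)\subset h^{-1}(O)$ for suitable open $O\ni h(f(x_0,y_0))$, where by assumption $h^{-1}(O)\in QC$-Maslyuchenko, and Lindel\"ofness of the second-countable $Y$ reduces the cover to countably many such basic $V$'s, yielding a meager total projection. For a Michael modification $X_g$ of $X\in QC$-Maslyuchenko by $g:L\to X$, the modified topology is analysed on its two natural pieces: on the locally compact paracompact $L$ the original Bouziad--Troallic theorem (or the assumption $L\in QC$-Maslyuchenko) applies, while on $X\setminus g(L)$ the inherited structure of $X$ survives the refinement.

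The main obstacle is closure under sequentially admissible function spaces. Given an $\aleph_0$-space $A$, a space $Z\in QC$-Maslyuchenko, and a subspace $Z'\subset C(A,Z)$ carrying a sequentially admissible topology, I need to show $Z'\in QC$-Maslyuchenko. The natural plan is to translate a $QC$-function $f:X\times Y\to Z'$ into its evaluation $E:X\times(Y\times A)\to Z$, $E(x,y,a)=f(x,y)(a)$, and apply the $QC$-Maslyuchenko property of $Z$. Two genuine difficulties arise. First, the parameter space $Y\times A$ is only cosmic and not second-countable, so $A$ must be dissected into countably many compact metrizable (hence second-countable) pieces via the countable $k$-network of $A$, producing a countable family of applications of the $QC$-Maslyuchenko property of $Z$ and a countable union of meager projections on $X$. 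Second, sequential admissibility delivers only sequential continuity of the evaluation, so transferring information between $D(f)$ and $D(E)$ requires a delicate sequential analysis together with a verification that the $QC$-property of $f$ valued in $Z'$ translates into a suitable $QC$-property of $E$ on the enlarged parameter space. I expect this last step to be the technical heart of the proof.
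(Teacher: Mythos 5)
Your overall architecture (check each closure axiom separately, with the Bouziad--Troallic theorem supplying the metrizable base case) is exactly the paper's, and your treatment of subspaces, homeomorphic images and countable products coincides with it. Two of the harder closure properties, however, contain genuine gaps as you have sketched them.

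In the fibered-preimage case, your plan assigns to each point of $(X\setminus M_0)\times Y$ a basic box $U\times V$ with $f(U\times V)\subset h^{-1}(O)$ for some member $O$ of the given open cover of the paracompact target. Lindel\"ofness of the second-countable $Y$ only makes the family of $V$'s countable; the cover by the sets $O$ is in general uncountable, so for a fixed basic $V$ you are left with an uncountable union of meager sets, which need not be meager. The paper closes exactly this hole: paracompactness is used to pass to a $\sigma$-discrete open refinement $\U$ with $g^{-1}(\bar U)$ still $QC$-Maslyuchenko for $U\in\U$, and the continuity of $f$ off $M_0\times Y$ is used to show that the induced family of sets $\{x\in X\setminus M_0: f(\{x\}\times V)\subset g^{-1}(\bar U)\}$, $U\in\U$, is $\sigma$-discrete in $X\setminus M_0$, whence the union of the corresponding meager sets is meager. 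Without some such discreteness argument this step fails.

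For sequentially admissible function spaces your plan hinges on dissecting the $\aleph_0$-space $A$ into countably many compact metrizable pieces, and this is impossible in general: an $\aleph_0$-space need not be $\sigma$-compact (the space of irrationals already is not), and the members of a countable $k$-network need not be compact; moreover, continuity of $f$ into $Z'\subset C(A,Z)$ cannot be recovered from continuity of evaluations on compact pieces unless $A$ is a $k$-space. The paper's route is different and essential: by Michael's theorem $A$ is the image of a separable metrizable space $R$ under a compact-covering map $\xi:R\to A$, so one studies $E(x,(y,r))=f(x,y)(\xi(r))$ on $X\times (Y\times R)$, where $Y\times R$ is genuinely second-countable. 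Continuity of $E$ in the second variable then follows from the sequential continuity of the evaluation because $Y\times R$ is first countable, and $D(f)\subset M\times Y$ is recovered because the compact-covering property makes $C_k\xi:C_k(A,Z)\to C_k(R,Z)$ a topological embedding and the topology of $Z'$ lies between $\tau_k$ and $\tau_p$. You correctly identified this lemma as the technical heart, but both of the difficulties you flag remain unresolved in your sketch, and the compactness-based decomposition you propose in their place does not work.
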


\begin{proof} By Theorem~\ref{t:QC} of Bouziad and Troallic, the class of $QC$-Maslyuchenko spaces contains all metrizable spaces. It remains to prove that this class is closed under taking subspaces, homeomorphic images, countable Tychonoff products, fibered preimages, strong $\bar\sigma$-sums, $\bar G_\delta$-retral unions, Michael modifications, and sequentially admissible function spaces.
The definition of a $QC$-Maslyuchenko space ensures that the class of $QC$-Maslyuchenko spaces is closed under taking subspaces and homeomorphic images. The remaining properties of this class are less trivial and are proved in the following six lemmas.
\end{proof}

\begin{lemma} The class of $QC$-Maslyuchenko spaces is closed under taking countable Tychonoff products.
\end{lemma}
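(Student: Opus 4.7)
Let $(Z_n)_{n\in\w}$ be a countable family of $QC$-Maslyuchenko spaces and set $Z=\prod_{n\in\w}Z_n$ with projections $\pi_n:Z\to Z_n$. Given a $QC$-function $f:X\times Y\to Z$ with $Y$ second countable, the plan is to reduce continuity of $f$ to continuity of its coordinates $f_n:=\pi_n\circ f:X\times Y\to Z_n$ and apply the hypothesis coordinatewise.

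First I would verify that each $f_n$ is a $QC$-function. Continuity with respect to the second variable is immediate since $\pi_n$ is continuous: $(f_n)^x=\pi_n\circ f^x$ is continuous for each $x\in X$. For lower quasicontinuity in the first variable, take $(x,y)\in X\times Y$, open sets $O_x\subset X$, $O_y\subset Y$, and an open neighborhood $O_{f_n(x,y)}\subset Z_n$ of $f_n(x,y)$. Then $\pi_n^{-1}(O_{f_n(x,y)})$ is an open neighborhood of $f(x,y)$ in $Z$, and the lower quasicontinuity of $f$ yields a non-empty open set $W\subset O_x$ with $f(\{w\}\times O_y)\cap\pi_n^{-1}(O_{f_n(x,y)})\ne\emptyset$ for every $w\in W$; applying $\pi_n$ gives $f_n(\{w\}\times O_y)\cap O_{f_n(x,y)}\ne\emptyset$, as required.

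Next, since each $Z_n$ is $QC$-Maslyuchenko, there is a meager set $M_n\subset X$ with $D(f_n)\subset M_n\times Y$. The key observation is that
\[
D(f)\subset\bigcup_{n\in\w}D(f_n).
\]
Indeed, a basic open neighborhood of $f(x,y)$ in the product topology on $Z$ has the form $\prod_{n\in\w}U_n$ with $U_n=Z_n$ for all but finitely many $n$. If each $f_n$ is continuous at $(x,y)$, then for each of the finitely many indices $n$ with $U_n\ne Z_n$ one can choose a neighborhood $V_n$ of $(x,y)$ in $X\times Y$ with $f_n(V_n)\subset U_n$; intersecting these finitely many $V_n$ yields a neighborhood $V$ of $(x,y)$ with $f(V)\subset\prod_{n\in\w}U_n$, showing $f$ is continuous at $(x,y)$.

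Combining these two facts, $D(f)\subset\bigcup_{n\in\w}(M_n\times Y)=\bigl(\bigcup_{n\in\w}M_n\bigr)\times Y$, and the countable union $\bigcup_{n\in\w}M_n$ is meager in $X$. Hence the projection of $D(f)$ on $X$ is meager, so $Z$ is $QC$-Maslyuchenko. There is no serious obstacle here beyond carefully verifying the two points above; the argument relies essentially on the fact that continuity into a product is determined by continuity of each coordinate, together with the fact that a countable union of meager sets is meager.
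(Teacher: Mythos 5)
Your proof is correct and follows essentially the same route as the paper's: compose with the coordinate projections, check that each $\pr_n\circ f$ is a $QC$-function, and use $D(f)\subset\bigcup_{n\in\w}D(\pr_n\circ f)$ together with the countable union of meager sets being meager. You merely spell out the two verifications that the paper leaves implicit.
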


\begin{proof} Given a sequence $(Z_n)_{n\in\w}$ of $QC$-Maslyuchenko spaces, consider their Tychonoff product $Z=\prod_{n\in\w}Z_n$, and take any $QC$-function $f:X\times Y\to Z$ defined on the product of a topological space $X$ and a second-countable space $Y$. For every $n\in\w$ consider the coordinate projection $\pr_n:Z\to Z_n$. It follows that the composition $\pr_n\circ f:X\times Y\to Z_n$ is
 a $QC$-function and hence $D(\pr_n\circ f)\subset M_n\times Y$ for some meager set $M_n\subset X$. Since $D(f)=\bigcup_{n\in\w}D(\pr_n\circ f)\subset\big(\bigcup_{n\in\w}M_n\big)\times Y$, the set $D(f)$ has meager projection on $X$, witnessing that the space $Z$ is $QC$-Maslyuchenko.
\end{proof}

\begin{lemma} The class of $QC$-Maslyuchenko spaces is closed under taking fibered preimages.
\end{lemma}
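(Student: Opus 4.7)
The plan is to reduce the continuity problem for $f \colon X \times Y \to Z$ to the corresponding problem in the base $W$ via the continuous map $p \colon Z \to W$ witnessing the fibered preimage property, and then to patch the resulting local information together using the Banach Category Theorem.

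First I would note that the composition $p \circ f \colon X \times Y \to W$ is again a $QC$-function. Continuity in the second variable is inherited from $f$ and $p$; lower quasicontinuity in the first variable follows by applying the lower quasicontinuity of $f$ to the open neighborhood $p^{-1}(O_{p(f(x,y))})$ of $f(x,y)$ in $Z$. Since $W$ is $QC$-Maslyuchenko, there is a meager set $M_0 \subset X$ with $\pr_X(D(p \circ f)) \subset M_0$, so it suffices to control $\pr_X(D(f) \setminus D(p \circ f))$.

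Fix a countable base $\{V_n\}_{n \in \w}$ of $Y$ and, for each $n \in \w$, let $X_n$ be the set of those $x \in X$ admitting an open neighborhood $U \ni x$ in $X$ and an open set $O \subset W$ such that $p^{-1}(O)$ is $QC$-Maslyuchenko and $f(U \times V_n) \subset p^{-1}(O)$; this $X_n$ is open in $X$. At any point $(x,y) \in (X \times Y) \setminus D(p \circ f)$, the continuity of $p \circ f$ at $(x,y)$ together with the distinguished neighborhood of $p(f(x,y))$ furnished by the fibered-preimage hypothesis produces such a witnessing box, so that $D(f) \setminus D(p \circ f) \subset \bigcup_{n \in \w}(X_n \times V_n)$. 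For a fixed $n$ and a fixed $x \in X_n$ with witnessing data $U_x, O_x$, the restriction $f|_{U_x \times V_n} \colon U_x \times V_n \to p^{-1}(O_x)$ is a $QC$-function on the product of a topological space and a second-countable space taking values in a $QC$-Maslyuchenko space; hence $\pr_X(D(f) \cap (U_x \times V_n))$ is meager in the open set $U_x$, and therefore meager in $X$.

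Setting $B_n := \pr_X(D(f) \cap (X_n \times V_n)) \subset X_n$, the family $\{U_x\}_{x \in X_n}$ is an open cover of the open set $\bigcup_{x \in X_n} U_x \supset X_n$ on which $B_n \cap U_x$ is meager in $X$ for every $x$. At this point I would invoke the Banach Category Theorem (locally meager subsets of a topological space are meager) to conclude that $B_n$ is meager in $X$, and then $\pr_X(D(f)) \subset M_0 \cup \bigcup_{n \in \w} B_n$ is meager, proving that $Z$ is $QC$-Maslyuchenko. The main obstacle is precisely this last patching step: the index set $\{x \in X_n\}$ is typically uncountable, so the union $B_n = \bigcup_{x \in X_n}(B_n \cap U_x)$ of meager sets is not meager for formal reasons, and Banach's localization theorem is essential; it fortunately holds in arbitrary topological spaces, so no extra hypothesis on $X$ is required (and paracompactness of the base $W$, though part of the definition of fibered preimage, plays no direct role in this particular argument).
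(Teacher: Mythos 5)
Your proof is correct, and its first two steps coincide with the paper's: compose with the bundle map to get a $QC$-function into the base, use that the base is $QC$-Maslyuchenko to obtain a meager set $M_0\subset X$ controlling $D(p\circ f)$, and then use the continuity of $p\circ f$ off $M_0\times Y$ together with the distinguished neighborhoods $O_w$ to localize $f$ into open boxes mapping into $QC$-Maslyuchenko preimages. Where you genuinely diverge is the final patching step, which is the only delicate point (the local meager sets are indexed by an uncountable family). The paper handles this by invoking the paracompactness of the base to refine the cover $\{O_w\}_{w\in W}$ by a $\sigma$-discrete open cover $\U$, and then observing that the sets $\{x\in X\setminus M_0: f(\{x\}\times B)\subset g^{-1}(\bar U)\}$, $U\in\U$, form a $\sigma$-discrete family in $X\setminus M_0$, so the union of the corresponding meager sets is still meager. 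You instead note that each $B_n$ is locally meager in the open set $X_n$ and apply the Banach Category Theorem, which does hold in arbitrary topological spaces (via a maximal disjoint family of open sets on which $B_n$ is meager). Your route is cleaner and, as you observe, proves slightly more: the paracompactness of the base is not needed for this lemma, only that the base and the preimages $p^{-1}(O_w)$ are $QC$-Maslyuchenko (paracompactness of the base is used elsewhere in the paper, e.g.\ in the nerve argument of Proposition~\ref{p:star}). The verifications you leave implicit --- that the restriction of a $QC$-function to an open box $U_x\times V_n$, viewed as a map into the open subspace $p^{-1}(O_x)$, is again a $QC$-function with the same discontinuity set, and that a meager subset of an open subspace is meager in the ambient space --- are routine and unproblematic.
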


\begin{proof} Fix a topological space $Z$ admitting a continuous map $g:Z\to P$ into a paracompact $QC$-Maslyuchenko space $P$ such that every point $y\in P$ has an open neighborhood $O_y\subset P$ whose preimage $g^{-1}(O_y)$ is $QC$-Maslyuchenko. Since $P$ is paracompact, the open cover $\{O_y:y\in P\}$ of $P$ can be refined by an open $\sigma$-discrete cover $\U$ such that for each set $U\in\U$ the closure $\bar U$ is contained in some set $O_{y(U)}$ and hence the space $g^{-1}(\bar U)$ is $QC$-Maslyuchenko.

To prove that the space $Z$ is $QC$-Maslyuchenko, fix a $QC$-function $f:X\times Y\to Z$ defined on the product of a topological space $X$ and second-countable space $Y$.
The continuity of $g$ implies that the composition $g\circ f:X\times Y\to P$ is a $QC$-function. Since the space $P$ is $QC$-Maslyuchenko, there is a meager subset $M_0\subset X$ such that $D(g\circ f)\subset M_0\times Y$. Let $X_0=X\setminus M_0$.

Fix a countable base $\mathcal B$ of the topology of the space $Y$. For every basic set $B\in\mathcal B$ and every $U\in\U$ consider the set $X_{B,U}=\{x\in X:f(\{x\}\times B)\subset g^{-1}(\bar U)\}$ and let $X^\circ_{B,U}$ be the interior of $X_{B,U}$ in $X$. Consider the restriction $f_{B,U}=f|X^\circ_{B,U}\times B$ of the map $f$ and observe that $f_{B,U}:X_{B,U}^\circ\times B\to g^{-1}(\bar U)$ is a $QC$-function. Since $g^{-1}(\bar U)$ is a $QC$-Maslyuchenko space, there exists a meager set $M_{B,U}\subset X_{B,U}^\circ$ such that $D(f_{B,U})\subset M_{B,U}\times B$.

The continuity of the function $f|X_0\times Y$ implies that for every $B\in\mathcal B$ the family $(X_0\cap X_{B,U})_{U\in\U}$ is $\sigma$-discrete in $X_0$ and hence the union $M_B=\bigcup_{U\in\U}M_{B,U}$ is meager in $X$. Then the union $M=\bigcup_{B\in\mathcal B}M_B$ is meager in $X$ too.

We claim that $D(f)\subset (M_0\cup M)\times Y$. Indeed, the continuity of the function $g\circ f$ at each point of the set $X_0\times Y$ guarantees that $X_0\times Y\subset \bigcup_{B\in\mathcal B}\bigcup_{U\in U}X^\circ_{B,U}\times B$. Then $$D(f)\cap (X_0\times Y)\subset \bigcup_{B\in\mathcal B}\bigcup_{U\in\U}D(f_{B,U})\subset \bigcup_{B\in\mathcal B}\bigcup_{U\in\U}(M_{B,U}\times B)\subset M\times Y$$ and hence $D(f)\subset (M_0\times Y)\cup (M\times Y)$.
\end{proof}

\begin{lemma} The class of $QC$-Maslyuchenko spaces is closed under strong $\bar\sigma$-sums.
\end{lemma}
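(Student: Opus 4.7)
The plan is to imitate the fibered-preimage argument, using the filtration $(Z_n)_{n\in\w}$ of $Z$ in place of a map to a paracompact base. Fix a countable base $\{B_m:m\in\w\}$ of $Y$ and for every $(m,n)\in\w\times\w$ set
$$X_{m,n}=\{x\in X:f(\{x\}\times B_m)\subset Z_n\}$$
with interior $X^\circ_{m,n}$ in $X$. The restriction $f_{m,n}:=f|X^\circ_{m,n}\times B_m$ takes values in the closed subspace $Z_n\subset Z$, inherits continuity in the second variable and lower quasicontinuity in the first, and is therefore a $QC$-function into the $QC$-Maslyuchenko space $Z_n$; this furnishes a meager set $M_{m,n}\subset X$ with $D(f_{m,n})\subset M_{m,n}\times B_m$.

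Two facts will carry the proof. The first is that each $X_{m,n}\setminus X^\circ_{m,n}$ is nowhere dense in $X$. Given $x\in X\setminus X_{m,n}$, pick $y\in B_m$ with $f(x,y)\in Z\setminus Z_n$ and apply lower quasicontinuity of $f$ at $(x,y)$ with $O_y=B_m$ and $O_{f(x,y)}=Z\setminus Z_n$: every neighborhood of $x$ contains a non-empty open set $W$ with $W\subset X\setminus X_{m,n}$. Setting $V=(X\setminus X_{m,n})^\circ$ (open in $X$), this gives $X\setminus X^\circ_{m,n}=\overline{X\setminus X_{m,n}}\subset\bar V$, and since $V$ is disjoint from $X_{m,n}$ we conclude $X_{m,n}\setminus X^\circ_{m,n}\subset\bar V\setminus V$, which is nowhere dense. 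Therefore $N:=\bigcup_{m,n}(X_{m,n}\setminus X^\circ_{m,n})$ is meager in $X$.

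The second, and main, fact is that for every $(x,y)\in X\times Y$ there exists a pair $(m,n)$ with $y\in B_m$ and $x\in X_{m,n}$. This is where the strong $\bar\sigma$-sum hypothesis intervenes. Using first countability of $Y$, fix a decreasing neighborhood base $(C_k)_{k\in\w}$ at $y$. If no such pair existed, then for each $k$ one could find a basic $B_{m_k}\subset C_k$ with $y\in B_{m_k}$ and, since $f^x(B_{m_k})\not\subset Z_k$, a point $y_k\in B_{m_k}\subset C_k$ with $f^x(y_k)\notin Z_k$. Then $y_k\to y$ in $Y$ and, by continuity of $f^x$, $f^x(y_k)\to f^x(y)$ in $Z$; the strong $\bar\sigma$-sum property places this convergent sequence inside some $Z_M$, contradicting $f^x(y_k)\notin Z_k$ as soon as $k>M$.

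Combining the two facts, set $M=\bigcup_{m,n}M_{m,n}$, still meager in $X$. For any $x\in X\setminus(M\cup N)$ and any $y\in Y$, choose $(m,n)$ from the second fact; since $x\in X_{m,n}$ and $x\notin N$ we have $x\in X^\circ_{m,n}$, so $(x,y)$ lies in the open rectangle $X^\circ_{m,n}\times B_m\subset X\times Y$. The continuity of $f_{m,n}$ at $(x,y)$, guaranteed by $x\notin M_{m,n}$, then upgrades to continuity of $f$ at $(x,y)$ thanks to the openness of the rectangle. Hence $D(f)\subset (M\cup N)\times Y$ with $M\cup N$ meager in $X$, which shows that $Z$ is $QC$-Maslyuchenko. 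The principal obstacle is the second fact, which converts the purely sequential hypothesis ``every convergent sequence of $Z$ lives in some $Z_n$'' into the topological statement that $f^x$ is locally valued in some $Z_n$; the diagonal extraction using first countability of $Y$ is precisely the bridge.
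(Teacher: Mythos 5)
Your proof is correct and follows essentially the same route as the paper: the same sets $X_{m,n}=\{x:f(\{x\}\times B_m)\subset Z_n\}$, the same use of lower quasicontinuity (applied to the open set $Z\setminus Z_n$) to show $X_{m,n}\setminus X^\circ_{m,n}$ is nowhere dense, and the same first-countability/convergent-sequence argument to show every $(x,y)$ lies in some rectangle $X_{m,n}\times B_m$. The only difference is cosmetic: you spell out in full the two steps the paper merely asserts (the nowhere-density claim and the ``second fact'').
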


\begin{proof} Assume that a topological space $Z$ can be written as the union $Z=\bigcup_{n\in\w}Z_n$ of an increasing sequence $(Z_n)_{n\in\w}$ of closed $QC$-Maslyuchenko subspaces of $Z$ such that each convergent sequence in $Z$ is contained in some set $Z_n$, $n\in\w$. To prove that the space $Z$ is $QC$-Maslyuchenko, fix a $QC$-function $f:X\times Y\to Z$ defined on the product of a topological space $X$ and a second-countable space $Y$. Fix a countable base $\mathcal B$ of the topology of the space $Y$. For every $n\in\w$ and $B\in\mathcal B$ consider the set
$X_{n,B}=\{x\in X:f(\{x\}\times B)\subset Z_n\}$ and let $\partial X_{n,B}$ be its boundary in $X$. We claim that $\partial X_{n,B}$ is nowhere dense in $X$.
This will follow as soon as we check that an open subset $U\subset X$ is contained in $X_{n,B}$ whenever $U\cap X_{n,B}$ is dense in $U$.
  Assuming that $U\not\subset X_{n,B}$, find a point $x\in U\setminus X_{n,B}$. Then $f(\{x\}\times B)\not\subset Z_n$ and hence $f(x,b)\in Z\setminus Z_n$ for some $b\in B$. Since $f$ is lower quasicontinuous with respect to the first variable, the set $U$ contains a non-empty open set $V$ such that $f(\{v\}\times B)\cap (Z\setminus Z_n)\ne\emptyset$ for every $v\in V$. Since $U\cap X_{n,B}$ is dense in $U$, there is a point $v\in V\cap X_{n,B}$. For this point we get $f(\{v\}\times B)\not\subset Z_n$, which contradicts the definition of the set $X_{n,B}\ni v$. This contradiction shows that $U\subset X_{n,B}$. So, the boundary $\partial X_{n,B}$ of the set $X_{n,B}$ is nowhere dense in $X$ and the interior $X_{n,B}^\circ$ of $X_{n,B}$ is dense in $X_{n,B}$.

Consider the restriction $f_{n,B}=f|X_{n,B}^\circ\times B$ and observe that $f_{n,B}:X_{n,B}^\circ\times B\to Z_n$ is a $QC$-function. Since $Z_n$ is a $QC$-Maslyuchenko space, the set $D(f_{n,B})$ is contained in $M_{n,B}\times B$ for some meager subset $M_{n,B}$ of $X_{n,B}^\circ$.
Then the set  $M=\bigcup_{n\in\w}\bigcup_{B\in\mathcal B}(\partial X_{n,B}\cup M_{n,B})$ is meager in $X$. We claim that $D(f)\subset M\times Y$. Given any point $x\in X\setminus M$, we should prove that the function $f$ is continuous at each point of the set $\{x\}\times Y$. Taking into account that the function $f^x:Y\to Z$ is continuous, $Y$ is first-countable and each convergent sequence in $Z$ is contained in some set $Z_n$, we can prove that every point $y\in Y$ has a neighborhood $B\in\mathcal B$ such that $f^x(B)\subset Z_n$ for some $n\in\w$. Then $x\in X_{n,B}$. Since $x\notin \partial X_{n,B}$, the point $x$ belongs to the interior $X_{n,B}^\circ$ of $X_{n,B}$. Since $x\notin M_{n,B}$ and $y\in B$, the function $f_{n,B}=f|X_{n,B}^\circ\times B$ is continuous at $(x,y)$ and so is the function $f$.
\end{proof}

\begin{lemma}\label{l:QC-retral} The class of $QC$-Maslyuchenko spaces is closed under $\bar G_\delta$-retral unions.
\end{lemma}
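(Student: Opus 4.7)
The plan is to mimic the proof of the preceding lemma (closure under strong $\bar\sigma$-sums), using the retractions $r_i$ to push $f$ into each $Z_i$ and the $\bar G_\delta$-expressions of the $Z_i$ to tame the fibers of $r_i$ near $Z_i$. Fix a countable base $\mathcal B$ of $Y$; for each $i\in\{1,\dots,n\}$ choose an open $U_i\subset Z$ with $Z_i\subset U_i$ whose closure $\bar U_i$ carries a retraction $r_i:\bar U_i\to Z_i$; and write $Z_i=\bigcap_{m\in\w}\bar V_{i,m}$ for open $V_{i,m}$, arranging (after intersection with $U_i$ and with earlier stages) that $(V_{i,m})_{m\in\w}$ is a decreasing sequence of open neighborhoods of $Z_i$ contained in $U_i$.

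For each $(i,B)\in\{1,\dots,n\}\times\mathcal B$ and each $m\in\w$, introduce the admission sets
\[X_{i,B}=\{x\in X:f(\{x\}\times B)\subset\bar U_i\}\quad\text{and}\quad X_{i,m,B}=\{x\in X:f(\{x\}\times B)\subset\bar V_{i,m}\}.\]
As in the preceding lemma, the closedness of $\bar U_i$ and $\bar V_{i,m}$ combined with the lower quasicontinuity of $f$ in the first variable shows that $\partial X_{i,B}$ and $\partial X_{i,m,B}$ are nowhere dense in $X$. On $X_{i,B}^\circ\times B$ the composition $r_i\circ f$ is a $QC$-function into the $QC$-Maslyuchenko space $Z_i$, so its discontinuity set is contained in $M_{i,B}\times B$ for some meager $M_{i,B}\subset X_{i,B}^\circ$. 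Put
\[M=\bigcup_{i,B}\bigl(\partial X_{i,B}\cup M_{i,B}\bigr)\cup\bigcup_{i,m,B}\partial X_{i,m,B},\]
a meager subset of $X$; I claim $D(f)\subset M\times Y$.

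To verify the claim, fix $x\notin M$ and $y\in Y$, and let $z=f(x,y)\in Z_i$ for some $i$. Using continuity of $f^x$ and $Z_i\subset V_{i,m}$, choose for each $m$ a basic neighborhood $B_m\in\mathcal B$ of $y$ with $f(\{x\}\times B_m)\subset V_{i,m}$; then $x\in X_{i,m,B_m}^\circ$ (as $x\notin\partial X_{i,m,B_m}$) and similarly $x\in X_{i,B_0}^\circ$ for some $B_0\in\mathcal B$ with $y\in B_0$. Given an open $W\ni z$ in $Z$, the key step produces (using continuity of $r_i$ together with the $\bar G_\delta$-description of $Z_i$) an open $W'\ni z$ in $Z$ and an index $m$ with $r_i^{-1}(W'\cap Z_i)\cap\bar V_{i,m}\subset W$. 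The continuity of $r_i\circ f$ at $(x,y)$, together with $x\in X_{i,B_0}^\circ$, supplies a neighborhood $O_1$ of $(x,y)$ inside $X_{i,B_0}^\circ\times B_0$ with $f(O_1)\subset\bar U_i$ and $r_i(f(O_1))\subset W'\cap Z_i$, while $x\in X_{i,m,B_m}^\circ$ supplies a neighborhood $O_2$ of $(x,y)$ with $f(O_2)\subset\bar V_{i,m}$; intersecting, $f(O_1\cap O_2)\subset r_i^{-1}(W'\cap Z_i)\cap\bar V_{i,m}\subset W$, so $f$ is continuous at $(x,y)$.

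The main obstacle is the extraction of the local neighborhood-base property ``$r_i^{-1}(W'\cap Z_i)\cap\bar V_{i,m}\subset W$'': the retraction alone controls only $r_i\circ f$, while the $\bar G_\delta$-shells $\bar V_{i,m}$ are what collapse the $r_i$-fibers near $Z_i$, so both hypotheses (neighborhood retract and $\bar G_\delta$) must be used in tandem -- typically via a subnet extraction applied to putative counter-points in the shrinking shells whose $r_i$-images converge to $z$, using that any cluster point of such a net lies in $\bigcap_m\bar V_{i,m}=Z_i$ and is fixed by $r_i$, hence equals $z$.
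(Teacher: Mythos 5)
The bookkeeping in your proposal parallels the paper's, but the ``key step'' on which your continuity argument rests --- that for every open $W\ni z$ there exist an open $W'\ni z$ and an index $m$ with $r_i^{-1}(W'\cap Z_i)\cap\bar V_{i,m}\subset W$ --- is a genuine gap, and in fact the claim is false in general. Your suggested justification by ``subnet extraction'' needs the net of putative counter-points to have a cluster point, and nothing in the hypotheses (no compactness, no first countability of $Z$) provides one. For a concrete failure, take $Z_i=\{z\}$ to be a singleton which is a $\bar G_\delta$-set but not of countable character (e.g.\ the non-isolated point of the Arens--Fort space), with $r_i$ the constant retraction of $Z$ onto $\{z\}$: then $r_i^{-1}(W'\cap Z_i)=Z$ for every $W'$, so your claim reduces to saying that the countably many sets $\bar V_{i,m}$ form a neighborhood base at $z$, which they do not.

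The paper avoids this entirely by never trying to show that $f(x',y')$ is close to $z$ via shrinking $r_i$-fibers. It indexes the admission sets by the \emph{subset} $F=\{\alpha\in A: f(x,y)\in Z_\alpha\}$ and requires the values $f(\{x\}\times B)$ to lie in $\bigcap_{\alpha\in F}\bar U_\alpha\setminus\bigcup_{\beta\in A\setminus F}U_{\beta,n}$; the $\bar G_\delta$-shells are used only negatively, to force nearby values of $f$ to avoid every $Z_\beta$ with $\beta\notin F$ and hence to lie in $\bigcup_{\alpha\in F}Z_\alpha$. Once $f(x',y')\in Z_\alpha$ for some $\alpha\in F$, it is \emph{fixed} by $r_\alpha$, so $f(x',y')=r_\alpha(f(x',y'))=f_{\alpha,F,B,n}(x',y')$, and the continuity at $(x,y)$ of the finitely many maps $r_\alpha\circ f$, $\alpha\in F$ (each of which takes the value $f(x,y)$ there), puts $f(x',y')$ into the prescribed neighborhood directly; no collapse of fibers is needed. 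To repair your proof you would have to replace the key step by this exclusion argument, and accordingly pass from single indices $i$ to subsets $F$, so that every value $f$ takes near $(x,y)$ is caught by some retraction whose composition with $f$ you control.
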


\begin{proof} We need to prove that a topological space $Z$ is $QC$-Maslyuchenko provided $Z$ is the finite union $Z=\bigcup_{\alpha\in A} Z_\alpha$ of $QC$-Maslyuchenko $\bar G_\delta$-subsets $Z_\alpha\subset Z$, which are neighborhood retracts in $Z$. For every $\alpha\in A$ fix a retraction $r_\alpha:\bar U_\alpha\to Z_\alpha$ defined on the closure $\bar U_\alpha$ of some open neighborhood $U_\alpha$ of $Z_\alpha$ in $Z$. For every $\alpha\in A$ choose a decreasing sequence $(U_{\alpha,n})_{n\in\w}$ of open sets in $Z$ such that  $Z_\alpha=\bigcap_{n\in\w}\bar U_{\alpha,n}$.

Fix a countable base $\mathcal B$ of the topology of the second-countable space $Y$. For every subset $F\subset A$, basic set $B\in\mathcal B$, and a number $n\in\w$ consider the set
$X_{F,B,n}=\{x\in X:f(\{x\}\times B)\subset \bigcap_{\alpha\in F}\bar U_\alpha\setminus \bigcup_{\beta\in A\setminus F}U_{\beta,n}\}$ and let $\partial X_{F,B,n}$ be its boundary in $X$. Repeating the argument from the preceding lemma, we can prove that the boundary $\partial X_{F,B,n}$ of $X_{F,B,n}$ is nowhere dense in $X$ and the interior $X_{F,B,n}^\circ$ of $X_{F,B,n}$ is dense in $X_{F,B,n}$.

For every $F\subset A$, $\alpha\in F$, $B\in\mathcal B$, and $n\in\w$ consider the function $f_{\alpha,F,B,n}=r_\alpha\circ f|X_{F,B,n}^\circ\times B$ and observe that $f_{\alpha,F,B,n}:X_{F,B,n}^\circ\times B\to Z_\alpha$ is a $QC$-function. Since $Z_\alpha$ is a $QC$-Maslyuchenko space, the set $D(f_{\alpha,F,B,n})\subset X_{F,B,n}^\circ\times B$ has meager projection $M_{\alpha,F,B,n}$ on $X_{F,B,n}^\circ$.

Then the set $M=\bigcup_{F\subset A}\bigcup_{\alpha\in A}\bigcup_{B\in\mathcal B}\bigcup_{n\in\w}(\partial X_{F,B,n}\cup M_{\alpha,F,B,n})$ is meager in $X$.
We claim that $D(f)\subset M\times Y$. Given any point $x\in X\setminus M$, we should prove that the function $f$ is continuous at each point of the set $\{x\}\times Y$. Fix any point $y\in Y$ and a neighborhood $O_{f(x,y)}\subset Z$ of $f(x,y)$.

Consider the non-empty subfamily $F=\{\alpha\in A:f(x,y)\in Z_\alpha\}\subset A$. Taking into account that $f(x,y)\notin\bigcup_{\beta\in A\setminus F}Z_\beta=\bigcap_{n\in\w}\bigcup_{\beta\in A\setminus F}\bar U_{\beta,n}$, find a number $n\in\w$ such that $f(x,y)\notin\bigcup_{\beta\in A\setminus F}\bar U_{\beta,n}$. The continuity of the function $f^x:Y\to Z$ at the point $y$ yields a basic neighborhood $B\in\mathcal B$ such that $f^x(B)\subset \bigcap_{\alpha\in F}U_\alpha\setminus\bigcup_{\beta\in A\setminus F}\bar U_{\beta,n}$. Then $x\in X_{F,B,n}$. Taking into account that $x\notin M$, we conclude that $x\in X_{F,B,n}\setminus \partial X_{F,B,n}=X_{F,B,n}^\circ$. Since $x\notin\bigcup_{\alpha\in F}M_{\alpha,F,B,n}$, for every $\alpha\in F$ the function $f_{\alpha,F,B,n}$ is continuous at $(x,y)$. For every $\alpha\in F$ the definition of the set $F$ ensures that $f(x,y)\in Z_\alpha$ and hence $f_{\alpha,F,B,n}(x,y)=r_\alpha\circ f(x,y)=f(x,y)\in O_{f(x,y)}$. Using the continuity of the functions $f_{\alpha,F,B,n}$, $\alpha\in F$, at $(x,y)$, we can find a neighborhood $O_{(x,y)}\subset X_{F,B,n}^\circ\times B$ of $(x,y)$ such that $f_\alpha(O_{x,y})\subset O_{f(x,y)}$ for all $\alpha\in F$. We claim that $f(O_{(x,y)})\subset O_{f(x,y)}$. Given any pair $(x',y')\in O_{x,y}\subset X_{F,B,n}\times B$, find $\alpha\in A$ such that $f(x',y')\in Z_\alpha$. It follows from $f(x',y')\in f(X_{F,B,n}\times B)\subset Z\setminus \bigcup_{\beta\in A\setminus F}Z_\beta$ that $\alpha\in F$ and hence $f(x,y)=r_\alpha\circ f(x,y)=f_{\alpha,F,B,n}(x,y)\in f_{\alpha,F,B,n}(O_{(x,y)})\subset O_{f(x,y)}$, witnessing that $f$ is continuous at $(x,y)$.
\end{proof}

\begin{lemma} The class of $QC$-Maslyuchenko spaces is closed under taking Michael modifications.
\end{lemma}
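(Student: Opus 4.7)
The plan is to mimic the structure of the proofs of the strong $\bar\sigma$-sums and $\bar G_\delta$-retral unions lemmas, exploiting the canonical projection $\pi:X_g\to X$ defined by $\pi|_L=g$ and $\pi|_{X\setminus g(L)}=\mathrm{id}$. This map is continuous by construction of the Michael topology, and every neighborhood of a point $z\in X\setminus g(L)$ in $X_g$ has the form $\pi^{-1}(U)$ for $U\in\tau_X$, while $L$ is open in $X_g$ carrying its original topology. Fix a $QC$-function $f:X\times Y\to X_g$ with $Y$ second countable, and a countable base $\mathcal B$ of $Y$.

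Step 1: the composition $\pi\circ f:X\times Y\to X$ is $QC$, so since $X$ is $QC$-Maslyuchenko there exists a meager $M_0\subset X$ with $D(\pi\circ f)\subset M_0\times Y$. For $(x,y)\in(X\setminus M_0)\times Y$ with $f(x,y)\in X\setminus g(L)$, the neighborhood basis of $f(x,y)$ consists of $\pi$-preimages of $X$-neighborhoods, and continuity of $\pi\circ f$ at $(x,y)$ immediately upgrades to continuity of $f$ at $(x,y)$.

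Step 2: for each $B\in\mathcal B$ let $X_B=\{x\in X:f(\{x\}\times B)\subset L\}$ and $X_B^\circ=\mathrm{int}(X_B)$. On the open set $X_B^\circ\times B$ the function $f$ takes values in $L$, so — using that $L$ is open in $X_g$ — $f|_{X_B^\circ\times B}:X_B^\circ\times B\to L$ is a $QC$-function. Since $L$ is $QC$-Maslyuchenko, the discontinuity set is contained in $M_B\times B$ for some meager $M_B\subset X_B^\circ$. The union $\bigcup_{B\in\mathcal B}X_B^\circ\times B$ equals $\mathrm{int}(f^{-1}(L))$, and on this open set $f$ is continuous at every point outside $\bigcup_B M_B\times B$.

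Step 3 is the main obstacle: the residual discontinuity points lie in $f^{-1}(L)\setminus\mathrm{int}(f^{-1}(L))$, and using continuity of $f^x$ together with openness of $L$ one checks that the projection of this set on $X$ lies in $\bigcup_{B\in\mathcal B}(X_B\setminus X_B^\circ)$. I would prove that $\partial X_B$ is nowhere dense by a lower-quasicontinuity argument, exactly in the spirit of the strong $\bar\sigma$-sums lemma, now applied to the \emph{closed} set $X_g\setminus L=X\setminus g(L)$: given a nonempty open $U\subset X$ with $U\cap X_B$ dense in $U$, pick $x\in U\setminus X_B$ and $y\in B$ with $f(x,y)\notin L$; the delicate point is that $f(x,y)\in X\setminus g(L)$ need not have a neighborhood disjoint from $L$, so instead of using an arbitrary neighborhood of $f(x,y)$ I would exploit the fact that every basic neighborhood of $f(x,y)$ in $X_g$ has the form $\pi^{-1}(V)=g^{-1}(V)\cup(V\setminus g(L))$, together with $\pi\circ f$ being continuous off $M_0\times Y$, to combine the lower $QC$-property applied to $f$ with the bookkeeping on $\pi\circ f$ and derive the desired contradiction modulo enlarging $M_0$ by a further meager set.

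Setting $M:=M_0\cup\bigcup_{B\in\mathcal B}(M_B\cup\partial X_B)$, the three steps together give $D(f)\subset M\times Y$. The hard part is Step 3: all the other reductions follow standard patterns established in the preceding five lemmas, and the principal novelty is the asymmetric appearance of $L$ as an \emph{open} (rather than closed) piece of $X_g$, which forces the quasicontinuity bookkeeping to be done via $\pi$ on the complementary closed set $X\setminus g(L)$ and via the original topology on $L$ separately, and then glued using that $L$ is open.
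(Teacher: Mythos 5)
Your overall architecture matches the paper's: factor through the canonical projection $\pr:Z_g\to Z$ to handle the points mapped into $Z\setminus g(L)$, and treat the $L$-valued part separately via sets of the form $X_B=\{x: f(\{x\}\times B)\subset L\}$. But your Step 3 --- which you yourself flag as ``the main obstacle'' --- is a genuine gap, and the sketch you offer for it does not close it. The contradiction argument from the strong $\bar\sigma$-sums lemma needs an \emph{open} set containing $f(x,b)$ that is disjoint from the ``good'' part of the codomain; there the good part $Z_n$ is closed, so $Z\setminus Z_n$ serves. Here the good part $L$ is open but typically not closed in $Z_g$ (a point $z\in \overline{g(L)}\setminus g(L)$ has every basic neighborhood $g^{-1}(U)\cup(U\setminus g(L))$ meeting $L$), so no such open set exists, and your fallback --- applying lower quasicontinuity to a basic neighborhood $\pr^{-1}(V)$ and combining with bookkeeping on $\pr\circ f$ --- only yields that $f(\{w\}\times B)$ meets $g^{-1}(V)$ for $w\in X_B$, which is perfectly consistent with lower quasicontinuity of $\pr\circ f$ and produces no contradiction. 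In particular, nowhere-density of $\partial X_B$ for \emph{your} $X_B$ is left unproved, and nothing in your argument uses the local compactness and paracompactness of $L$, which are hypotheses of the Michael modification for a reason.

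The missing idea is to exploit that local compactness. The paper fixes a locally finite open cover $\U$ of $L$ by sets with compact closure, defines $X_B=\{x:\exists U\in\U\; f(\{x\}\times B)\subset U\}$ (containment in a \emph{single} relatively compact member, not merely in $L$), and works with $\bar X_B^\circ$, the interior of the closure of $X_B$, whose boundary is nowhere dense for free. The substantive claim is then that $f(\{x\}\times B)$ lands in the compact double star $S=\St(\St(f(x,b),\bar\U),\bar\U)\subset L$ for every $x\in\bar X_B^\circ$. Compactness of $S$ is exactly what makes $Z_g\setminus S$ open in $Z_g$ (for $z\in Z\setminus g(L)$ one takes the basic neighborhood built from $Z\setminus g(S)$, using that $g(S)$ is compact in the Hausdorff space $Z$), so lower quasicontinuity can be applied to the pair of open sets $\St(f(x,b),\U)$ and $Z_g\setminus S$ to produce a point $w\in X_B$ with $f(\{w\}\times B)$ both inside $S$ and meeting $Z_g\setminus S$ --- the desired contradiction. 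Only after this reduction does one invoke that $L$ (not just $Z$) is $QC$-Maslyuchenko on $f|\bar X_B^\circ\times B$. Without some substitute for this compactness device, your proof does not go through.
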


\begin{proof} We need to prove that for any map $g:L\to Z$ from a locally compact paracompact $QC$-Maslyuchenko space $L$ to a $QC$-Maslyuchenko space $Z$ the Michael modification $Z_g$ is $QC$-Maslyuchenko. Fix a $QC$-function $f:X\times Y\to Z_g$ defined on the product of a topological space $X$ and a second countable space $Y$. The definition of the topology on the space $Z_g$ guarantees that the map $\pr:Z_g\to Z$ defined by $\pr|L=g$ and $\pr|Z\setminus g(L)=\id$ is continuous. Then the composition $\pr\circ f:X\times Y\to Z$ is a $QC$-function. Since the space $Z$ is $QC$-Maslyuchenko, there is a meager set $M_0\subset X$ such that $D(\pr\circ f)\subset M_0\times Y$.

Fix a countable base $\mathcal B$ of the topology of the space $Y$. By the paracompactness, the locally compact space $L$ admits a locally finite open cover $\U$ such that each set $U\in\U$ has compact closure $\bar U$ in $L$. Let $\bar\U=\{\bar U:U\in\U\}$. For any subset $A\subset L$ consider its $\U$-star $\St(A,\U)=\bigcup\{U\in\U:A\cap U\ne\emptyset\}$ and observe that for a compact set $A\subset L$ the star $\St(A,\bar \U)$ is compact.

For every $B\in\mathcal B$ consider the subset
$X_{B}=\{x\in X:\exists U\in\U\;f(\{x\}\times B)\subset U\}$ of $X$. Here we identify $L$ with a subspace of $Z_g$. Let $\bar X_{B}^\circ$ be the interior of the closure $\bar X_{B}$ of $X_B$ in $X$. We claim that for each point $x\in \bar X^{\circ}_{B}$ and each $b\in B$ we get
$f(\{x\}\times B)\subset \St(\St(f(x,b),\bar\U),\bar \U)\subset L$. Assuming the converse, we could find a point $y\in B$ such that $f(x,y)\notin \St(\St(f(x,b),\bar\U),\bar \U)$.
The compactness of the set $S=\St(\St(f(x,b),\bar\U),\bar \U)$ guarantees that its complement $Z_g\setminus S$ is an open neighborhood of $f(x,y)$ in $Z_g$.
Then we can apply the  lower quasicontinuity of $f$ by the first variable, and find a non-empty open set $W\subset \bar X_{B}^\circ$ such that
$f(\{w\}\times B)\cap \St(f(x,b),\U)\ne\emptyset\ne f(\{w\}\times B)\cap (Z_g\setminus S)$ for every $w\in W$. Then for any point $w\in W\cap X_{B}$ we get $f(\{w\}\times B)\subset \St(\St(f(x,b),\U),\U)\subset S$, which contradicts the choice of $W$. This contradiction shows that $f(\{x\}\times B)\subset S\subset L$ for every $x\in \bar X_B^\circ$. Now consider the restriction $f_B=f|\bar X_B^\circ\times B$. Since the space $L$ is $QC$-Maslyuchenko, for the $QC$-function $f_B:\bar X^\circ_B\times B\to L$ there exists a meager set $M_B\subset \bar X_B^\circ$ such that $D(f_B)\subset M_B\times B$. Consider the meager set $M=M_0\cup\bigcup_{B\in\mathcal B}(\partial \bar X_{B}\cup M_B)$ in $X$. We claim that $D(f)\subset M\times Y$. Given any point $x\in X\setminus M$ and $y\in Y$, we should prove that $f$ is continuous at $(x,y)$. If $f(x,y)\in L$, then by the continuity of the function $f^x:Y\to Z_g$, we can find a basic neighborhood $B\in\mathcal B$ of $y$ such that $f(\{x\}\times B)\subset U$ for some $U\in\U$. Then $x\in X_B\subset \bar X_B$. Since $x\notin \partial \bar X_B$, the point $x$ belongs to the open set $\bar X_B^\circ$. It follows from $x\notin M$ that $x\notin M_B$ and hence the function $f_B=f|\bar X_B^\circ\times B$ is continuous at $(x,y)$ and so is the function $f$.

If $f(x,y)\notin L$, then the continuity of the function $f$ at $(x,y)$ follows from the continuity of the function $\pr\circ g:X\times Y\to Z$ at $(x,y)$ and the definition of the topology on the space $Z_g$ (the function $\pr\circ g$ is continuous at $(x,y)$ as $x\notin M_0$).
\end{proof}

\begin{lemma} The class of $QC$-Maslyuchenko spaces is closed under taking sequentially admissible function spaces.
\end{lemma}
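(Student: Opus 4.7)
My plan is to reduce the $QC$-Maslyuchenko property of $Z$ to that of $Y$ by ``unfolding'' a test $QC$-function $f\colon A\times B\to Z$, using Michael's realization of the $\aleph_0$-space $X$ as a continuous compact-covering image $q\colon M\to X$ of a separable metric space $M$ (see \cite{Mich}).

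Specifically, given a $QC$-function $f\colon A\times B\to Z$ with $A$ a topological space and $B$ second-countable, I would define $F\colon A\times(B\times M)\to Y$ by $F(a,(b,m))=e(f(a,b),q(m))=f(a,b)(q(m))$. The domain $B\times M$ is second-countable. The key technical step is the verification that $F$ is a $QC$-function. For continuity in the second variable, I observe that $F^a=e\circ(f^a\times q)$ is sequentially continuous (the evaluation $e\colon Z\times X\to Y$ is sequentially continuous by sequential admissibility of $\tau$, and both $f^a$ and $q$ are continuous); the first-countability of $B\times M$ then upgrades sequential continuity to genuine continuity. For lower quasicontinuity in the first variable, given $(a,(b,m))$ and neighborhoods $O_a$, $O_b\times O_m$, $V\ni F(a,(b,m))$, I consider the pointwise-open set $W^*=\{g\in Z:g(q(m))\in V\}$; admissibility of $\tau$ ensures that $W^*$ is $\tau$-open in $Z$, so the lower quasicontinuity of $f$ applied to $W^*$ yields a non-empty open $W\subset O_a$ such that for every $w\in W$ there exists $b'\in O_b$ with $f(w,b')\in W^*$, \emph{i.e.}, $F(w,(b',m))\in V$ with $(b',m)\in O_b\times O_m$.

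Since $Y$ is $QC$-Maslyuchenko, there is a meager $M^*\subset A$ with $D(F)\subset M^*\times(B\times M)$. I would then show that the same $M^*$ witnesses $D(f)\subset M^*\times B$. Fix $a\in A\setminus M^*$ and $b\in B$. Since admissibility makes the compact-open topology on $Z$ finer than $\tau$, it suffices to prove $\tau_k$-continuity of $f$ at $(a,b)$, which reduces to showing that $f^{-1}([K,U])$ is a neighborhood of $(a,b)$ for every compact $K\subset X$ and open $U\subset Y$ with $f(a,b)(K)\subset U$. By the compact-covering property of $q$ I choose a compact $\tilde K\subset M$ with $q(\tilde K)=K$, so $F(a,(b,m))\in U$ for every $m\in\tilde K$. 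The continuity of $F$ at each $(a,(b,m))$ supplies basic open boxes $O^m_a\times O^m_b\times O^m_m$ around $(a,b,m)$ that are mapped by $F$ into $U$; a standard tube/compactness argument applied to the open cover $\{O^m_m\}_{m\in\tilde K}$ of the compact $\tilde K$ produces a finite subcover and a neighborhood $O_a\times O_b$ of $(a,b)$ (the intersection of the finitely many $O^{m_i}_a\times O^{m_i}_b$) such that $f(a',b')(K)\subset U$ for all $(a',b')\in O_a\times O_b$.

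The main obstacle is the $QC$-verification for $F$: the continuity of $F^a$ cannot be deduced directly from continuity of $e$ (which is only sequential) without exploiting first-countability of $B\times M$, and the transfer of lower quasicontinuity hinges on recognizing $\{g\in Z:g(q(m))\in V\}$ as a legitimate $\tau$-open target via admissibility. Once $F$ is in place, the compact-covering property of $q$ converts any compact $K\subset X$ into a compact $\tilde K\subset M$, and the tube lemma bridges pointwise-in-$m$ continuity of $F$ along $\tilde K$ to compact-open control of $f$ along $K$, completing the argument.
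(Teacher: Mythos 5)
Your proof is correct and follows essentially the same route as the paper: unfold $f$ through Michael's compact-covering map $q\colon M\to X$ into the auxiliary $QC$-function $F(a,(b,m))=f(a,b)(q(m))$ with values in $Y$ (verifying lower quasicontinuity via the $\tau_p$-open, hence $\tau$-open, sets $\{g\in Z:g(q(m))\in V\}$ and continuity in the second variable via sequential continuity of the evaluation plus first-countability of $B\times M$), apply the $QC$-Maslyuchenko property of $Y$, and transfer the meager set back. The only cosmetic difference is the last step, where the paper invokes that the dual map $C_kq\colon C_k(X,Y)\to C_k(M,Y)$ is a topological embedding together with the exponential correspondence, while you unwind the same fact into an explicit tube-lemma argument over a compact lift $\tilde K$ of $K$.
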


\begin{proof} Let $X$ be an $\aleph_0$-space, $Y$ be a $QC$-Maslyuchenko space, and $Z\subset C(X,Y)$ be a subspace endowed with a sequentially admissible topology $\tau$.
 To prove that the topological space $(Z,\tau)$ is $QC$-Maslyuchenko, take any $QC$-function $f:T\times S\to Z$ defined on the product of a topological space $T$ and a second-countable space $S$. To each point $x\in X$ assign the $Y$-valued Dirac measure $\delta_x:C(X,Y)\to Y$, $\delta_x:\varphi\mapsto \varphi(x)$. Since the topology $\tau$ of $Z$ is admissible, the restriction $\delta_x|Z:Z\to Y$ is continuous. This implies that for any $x\in X$ the composition $\delta_x\circ f:T\times S\to Y$ remains a $QC$-function.

By \cite{Mich} (see also Remark in \cite[p.494]{Grue}), the $\aleph_0$-space $X$ is the image of a metrizable separable space $R$ under a continuous compact-covering map $\xi:R\to X$. The compact-covering property of $\xi$ implies that the dual map $C_k\xi:C_k(X,Y)\to C_k(R,Y)$, $C_k\xi:\varphi\mapsto \varphi\circ\xi$, is a topological embedding.

  Consider the function $g:T\times S\times R\to Y$ assigning to each triple $(t,s,r)\in T\times S\times R$ the point $\delta_{\xi(r)}\big(f(t,s)\big)\in Y$. We claim that $g:T\times (S\times R)\to Y$ is a $QC$-function.

To see that $g$ is lower quasicontinuous with respect to the first variable, fix a triple
$(t,s,r)\in T\times S\times R$ and open neighborhoods $O_t\subset T$, $O_s\subset S$, $O_r\subset R$, and $O_{g(t,s,r)}\subset Z$ of the points $t,s,r$ and $g(t,s,r)$, respectively.
  Since $g(t,s,r)=\delta_{\xi(r)}(f(t,s))$ and $\delta_{\xi(r)}\circ f:T\times S\to Y$ is a $QC$-function, there exists a non-empty open set $U\subset O_t$ such that $\emptyset\ne \delta_{\xi(r)}\circ f(\{u\}\times O_s)\cap O_{g(t,s,r)}=g(\{u\}\times O_s\times\{r\})\cap O_{g(t,s,r)}\subset g(\{u\}\times O_s\times O_r)\cap O_{g(t,s,r)}$ for all $u\in U$, witnessing that the function $g$ is lower quasicontinuous with respect to the first variable.

 Next, we show that $g$ is continuous with respect to the second variable, which means that for any $t\in T$ the function $g^t:S\times R\to Y$, $g^t:(s,r)\mapsto g(t,s,r)$, is continuous. Since $f:T\times S\to Z$ is a $QC$-function, the function $f^t:S\to Z$, $f^t:s\mapsto f(t,s)$, is continuous. Since the topology on $Z$ is sequentially admissible, the evaluation map $e:Z\times X\to Y$, $e:(z,x)\mapsto z(x)=\delta_x(z)$, is sequentially continuous. Observing that $g^t(s,r)=\delta_{\xi(r)}(f(t,s))=e(f^t(s),\xi(r))$, we see that the function $g^t$ is sequentially continuous and hence continuous as the space $S\times R$ is first countable.

  Therefore, $g:T\times (S\times R)\to Y$ is a $QC$-function. Since $Y$ is a $QC$-Maslyuchenko space, the set $D(g)\subset T\times (S\times R)$ has meager projection $M$ on $T$. It remains to prove that $D(f)\subset M\times S$. Given any point $(t,s)\in (T\setminus M)\times S$ we should check that $f$ is continuous at $(t,s)$. As $g$ is a $QC$-function, the function $g^{(t,s)}:R\to Y$, $g^{(t,s)}:r\mapsto g(t,s,r)$, is continuous.

    The continuity of the function $g:T\times S\times R\to Y$ at each point of the set $(X\setminus M)\times S\times R$ implies that the map $\dot g:T\times S\to C_k(R,Y)$, $\dot g:(t,s)\mapsto g^{(t,s)}$, is continuous at each point of the set $(T\setminus M)\times S$. Observe that for any triple $(t,s,r)\in T\times S\times R$  we get $g^{(t,s)}(r)=\delta_{\xi(r)}(f(t,s))$ and hence $g^{(t,s)}=C_k\xi(f(t,s))\in C_k\xi(C_k(X,Y))$.
Since  $C_k\xi:C_k(X,Y)\to C_k(R,Y)$ is a topological embedding, the map $f:T\times S\to (Z,\tau_k|Z)\subset C_k(X,Y)$ is continuous at each point of the set $(T\setminus M)\times S$. Since the topology on $Z$ is $k$-admissible, the identity map $(Z,\tau_k|Z)\to Z$ is continuous, which implies that the map $f:T\times S\to Z$ is continuous at each point of the set $(T\setminus M)\times S$. This completes the proof of the inclusion $D(f)\subset M\times S$.
\end{proof}

Theorem~\ref{t:QC-stable} and Proposition~\ref{p:star} imply:

\begin{corollary} A paracompact space $X$ is $QC$-Maslyuchenko if and only if $X$ has a star-countable cover by open $QC$-Maslyuchenko subspaces.
\end{corollary}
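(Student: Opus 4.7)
The plan is to apply Proposition~\ref{p:star} directly to the class $\C$ of $QC$-Maslyuchenko spaces, so the whole argument reduces to verifying the hypotheses of that proposition for this particular class. By Theorem~\ref{t:QC-stable}, the class $\C$ is $M$-stable, hence by the very definition of $M$-stability it contains all metrizable spaces (in particular, all locally compact metrizable ones) and is closed under taking subspaces, strong $\bar\sigma$-sums, and fibered preimages. These are precisely the three closure properties required in the hypothesis of Proposition~\ref{p:star}.

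Therefore the only thing to do is to cite Theorem~\ref{t:QC-stable} to extract the three relevant closure properties, observe that metrizability implies $QC$-Maslyuchenko-ness (via the Bouziad--Troallic Theorem~\ref{t:QC}, already built into Theorem~\ref{t:QC-stable}), and then invoke Proposition~\ref{p:star} with $\C$ taken to be the class of $QC$-Maslyuchenko spaces. The conclusion is then exactly the statement of the corollary: a paracompact space $X$ is $QC$-Maslyuchenko iff it admits a star-countable cover by open $QC$-Maslyuchenko subspaces.

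There is no real obstacle here: the ``only if'' direction is trivial (the singleton cover $\{X\}$ already works), and the ``if'' direction is an immediate instantiation of a previously established general principle. The substantive work has already been carried out in Theorem~\ref{t:QC-stable} (where fibered preimages, subspaces, and strong $\bar\sigma$-sums are handled) and in Proposition~\ref{p:star} (where the nerve construction, the partition of unity, and the analysis of the canonical map into the geometric realization are performed).
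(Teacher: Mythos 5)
Your proposal is correct and coincides exactly with the paper's own argument: the paper derives this corollary by combining Theorem~\ref{t:QC-stable} (the $M$-stability of the class of $QC$-Maslyuchenko spaces, which supplies containment of all metrizable spaces and closure under subspaces, strong $\bar\sigma$-sums, and fibered preimages) with Proposition~\ref{p:star}. Nothing further is needed.
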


Taking into account that the class of $QC$-Maslyuchenko spaces contains all metrizable spaces and is closed under strong $\bar\sigma$-sums, we obtain the following generalization of a result of Maslyuchenko, Mykhaylyuk and Shishina \cite{MMS} (who proved that for every strongly $\sigma$-metrizable space $Z$ and separately continuous function $f:X\times Y\to Z$ defined on the product of a topological space $X$ and a compact metrizable space $Y$ the set $D(f)$ has meager projection on $X$).

\begin{corollary} Each strongly $\sigma$-metrizable space is $QC$-Maslyuchenko.
\end{corollary}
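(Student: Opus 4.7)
The plan is to deduce this corollary immediately from Theorem~\ref{t:QC-stable} together with the definition of strongly $\sigma$-metrizable. By definition, a strongly $\sigma$-metrizable space $Z$ admits a presentation $Z=\bigcup_{n\in\w}Z_n$ as the union of an increasing sequence of closed metrizable subspaces $Z_n\subset Z$ such that every convergent sequence in $Z$ is contained in some $Z_n$. This is precisely the form required to apply the strong $\bar\sigma$-sum operation.

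The first step is to recall that by Theorem~\ref{t:QC} of Bouziad--Troallic, every metrizable space is $QC$-Maslyuchenko; in particular each $Z_n$ belongs to the class of $QC$-Maslyuchenko spaces. The second step is to invoke Theorem~\ref{t:QC-stable}, which asserts the $M$-stability of the class of $QC$-Maslyuchenko spaces; in particular this class is closed under strong $\bar\sigma$-sums. Applying this closure property to the presentation $Z=\bigcup_{n\in\w}Z_n$ yields $Z\in\C$, which is exactly the conclusion.

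There is essentially no obstacle at this level: all the work has been absorbed into Theorem~\ref{t:QC-stable}, and specifically into the lemma there which handles strong $\bar\sigma$-sums. The only thing worth checking in writing is that the hypotheses of the strong $\bar\sigma$-sum clause match the definition of \emph{strongly $\sigma$-metrizable} verbatim, so that no additional verification is needed beyond citing Theorem~\ref{t:QC-stable}.
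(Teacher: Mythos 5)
Your proposal is correct and matches the paper's argument exactly: the paper likewise obtains this corollary by combining the fact that metrizable spaces are $QC$-Maslyuchenko (Theorem~\ref{t:QC}) with the closure of the class under strong $\bar\sigma$-sums established in Theorem~\ref{t:QC-stable}, the definition of strongly $\sigma$-metrizable being precisely the hypothesis of that closure property.
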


Theorems~\ref{t:QC-stable} and \ref{t:aleph0} imply:

\begin{corollary}\label{c:QC-aleph0} Each $\aleph_0$-space is $QC$-Maslyuchenko.
\end{corollary}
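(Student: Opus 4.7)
The plan is to observe that this corollary is a one-line consequence of the two main results that have just been proved in the preceding sections. Let $\mathcal{C}$ denote the class of $QC$-Maslyuchenko topological spaces. Theorem~\ref{t:QC-stable} asserts that $\mathcal{C}$ is $M$-stable, and Theorem~\ref{t:aleph0} asserts that every $M$-stable class contains all $\aleph_0$-spaces. Combining these two statements yields that $\mathcal{C}$ contains all $\aleph_0$-spaces, which is precisely the conclusion of the corollary.

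Thus the only thing I would write is: by Theorem~\ref{t:QC-stable} the class of $QC$-Maslyuchenko spaces is $M$-stable, and by Theorem~\ref{t:aleph0} every $M$-stable class contains every $\aleph_0$-space, so each $\aleph_0$-space is $QC$-Maslyuchenko. No further argument is needed.

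There is no genuine obstacle to this proof, since all the substantive work has already been done. The weight of the result is carried by Theorem~\ref{t:QC-stable}, whose six constituent lemmas verify closure under countable Tychonoff products, fibered preimages, strong $\bar\sigma$-sums, $\bar G_\delta$-retral unions, Michael modifications, and sequentially admissible function spaces, with the Bouziad--Troallic Theorem~\ref{t:QC} providing the metrizable base case. The embedding mechanism that lets one exchange an $\aleph_0$-space $Z$ for a subspace of the iterated function space $C_k(C_k(Z,\mathbb{R}),\mathbb{R})$, together with Michael's theorem on function spaces over $\aleph_0$-spaces and the sequential admissibility supplied by Lemma~\ref{l:C_K}, is already packaged inside Theorem~\ref{t:aleph0}. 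Consequently, the corollary requires nothing beyond citing these two theorems.
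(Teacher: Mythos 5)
Your proof is correct and is exactly the paper's argument: the corollary is stated immediately after the line ``Theorems~\ref{t:QC-stable} and \ref{t:aleph0} imply,'' so the intended proof is precisely the combination of the $M$-stability of the class of $QC$-Maslyuchenko spaces with the fact that every $M$-stable class contains all $\aleph_0$-spaces. Nothing further is needed.
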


Taking into account that the class of $QC$-Maslyuchenko spaces is closed under Michael modifications, we get the following example showing that the class of $QC$-Maslyuchenko spaces is not contained in the class of $\sigma$-spaces.

\begin{example} The Michael line $\IR_{\IQ}$ is a $QC$-Maslyuchenko space which fails to be a $\sigma$-space.
\end{example}

\section{$KC$-Maslyuchenko spaces}\label{s:KC}

We recall that a topological space $Z$ is called {\em $KC$-Maslyuchenko} if for each $KC$-function $f:X\times Y\to Z$ defined on the product of a topological space $X$ and a second-countable space $Y$ the set $D(f)$ of discontinuity points of $f$ has meager projection on $X$ and hence is contained in the product $M\times Y$ for some meager set $M\subset X$.

Modifying the proof of Theorem~\ref{t:QC-stable} one can prove:

\begin{theorem}\label{t:KC-stable} The class of $KC$-Maslyuchenko spaces is $M$-stable.
\end{theorem}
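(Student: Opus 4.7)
The plan is to mirror the proof of Theorem~\ref{t:QC-stable} step by step, with a single systematic substitution. Theorem~\ref{t:KC} of Maslyuchenko supplies the base case that every metrizable space is $KC$-Maslyuchenko, and closure under subspaces and homeomorphic images is immediate, since restrictions and homeomorphic images of $KC$-functions remain $KC$-functions with the same or smaller discontinuity set. The key observation enabling the rest is that each of the six stability lemmas in Theorem~\ref{t:QC-stable} invokes lower quasicontinuity of $f$ at a single crucial step, namely to contradict the density of an auxiliary set at a distinguished point. Wherever this happens, the $KC$ proof will instead invoke the plain quasicontinuity of the slice $f_b:X\to Z$ at $x$ for the specific $b\in O_y$ at which the failure occurs; this yields a non-empty open $V\subset O_x$ with $f(V\times\{b\})\subset O_{f(x,b)}$ in place of the $QC$-conclusion involving $f(\{w\}\times O_y)$, and is sufficient whenever the contradiction is realized at the single point $(v,b)$.

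For the countable product, fibered preimage, strong $\bar\sigma$-sum, and $\bar G_\delta$-retral union lemmas, this substitution is routine: compositions of $KC$-functions with continuous maps are $KC$, restrictions to product slabs are $KC$, and the nowhere-density arguments for the boundary sets $\partial X_{n,B}$ and $\partial X_{F,B,n}$ go through verbatim with the parameter $b$ pinned to the offending coordinate. The sequentially admissible function space lemma transfers directly as well: the auxiliary function $g(t,s,r)=\delta_{\xi(r)}(f(t,s))$ is a $KC$-function on $T\times(S\times R)$ because its slice $g_{(s,r)}=\delta_{\xi(r)}\circ f_s$ is the composition of the quasicontinuous $f_s$ with the continuous evaluation $\delta_{\xi(r)}|Z$, while its slice $g^t=e\circ(f^t\times\xi)$ is sequentially continuous, hence continuous on the first-countable space $S\times R$, by sequential admissibility of the topology on $Z$.

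The main obstacle is the Michael modifications lemma. The $QC$ proof relies on a single application of lower quasicontinuity producing an open set $W\subset\bar X_B^\circ$ on which $f(\{w\}\times B)$ simultaneously hits both $\St(f(x,b),\U)$ and $Z_g\setminus S$, and then combines this with density of $X_B$ in $\bar X_B$ to force the crucial containment $f(\{x\}\times B)\subset S$ for $x\in\bar X_B^\circ$. In the $KC$ setting, the two conditions arise from separate applications of quasicontinuity of $f_b$ and of $f_y$ at $x$, producing two non-empty open subsets of $\bar X_B^\circ$ that need not intersect. My plan is to apply quasicontinuity of $f_b$ at $x$ first with target $\St(f(x,b),\U)$ to obtain $V'\subset\bar X_B^\circ$ such that every $v'\in V'\cap X_B$ (non-empty by density of $X_B$ in $\bar X_B$) has its associated cover member $U_{v'}\in\U$ contained in $S$ via a star-computation, forcing $f(\{v'\}\times B)\subset S$ on a dense subset of $V'$. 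Upgrading this from a dense subset of $V'$ to the full open set $\bar X_B^\circ$ then requires an additional Baire-category step using the quasicontinuity of $f_y$ at $x$ for each $y$ in a countable dense subset of $B$ together with the continuity of the slices $f^{x'}$; the net effect is to replace the single nowhere-dense boundary $\partial\bar X_B$ of the $QC$ proof with a countable union of nowhere-dense sets, which still contributes only a meager projection to $X$.
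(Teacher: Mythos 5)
Your overall strategy coincides with the paper's: the paper proves Theorem~\ref{t:KC-stable} by the one\nobreakdash-line instruction ``modify the proof of Theorem~\ref{t:QC-stable}'', and your proposal is exactly such a modification, with the correct base case (Theorem~\ref{t:KC}) and the correct observation that the auxiliary functions ($\pr_n\circ f$, restrictions to open rectangles $X^\circ_{n,B}\times B$, $r_\alpha\circ f$, $\delta_{\xi(r)}\circ f$) remain $KC$ because restriction to open sets and composition with continuous maps preserve both quasicontinuity of the $x$-slices and continuity of the $y$-slices. However, you are making the adaptation harder than it is. Since every $KC$-function is a $QC$-function (quasicontinuity of $f_b$ at $x$ gives $f(V\times\{b\})\subset O$, which is \emph{stronger} than $f(\{v\}\times O_y)\cap O\ne\emptyset$), every step of the six lemmas that only uses the lower quasicontinuity of the ambient function $f$ --- the nowhere-density of the boundaries $\partial X_{n,B}$, $\partial X_{F,B,n}$, the star computations in the Michael modification lemma --- applies to a $KC$-function verbatim, with no substitution needed. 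The only genuine content of the modification is the $KC$-ness of the derived functions, which you do verify.

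The gap is in your treatment of the Michael modifications lemma. The ``obstacle'' you describe is self-inflicted: you do not need to derive the two conditions on $W$ from two separate applications of slice quasicontinuity, because the lower quasicontinuity of $f$ with respect to the first variable is available for free and whatever the $QC$-proof does at that point transfers unchanged. More importantly, your proposed repair --- ``an additional Baire-category step using the quasicontinuity of $f_y$ at $x$ for each $y$ in a countable dense subset of $B$ together with the continuity of the slices $f^{x'}$'' --- is not a proof. You do not explain how knowing $f(\{v'\}\times B)\subset S$ for $v'$ in a dense subset of a single open set $V'$ upgrades to $f(\{x\}\times B)\subset S$ at the distinguished point $x$: quasicontinuity of $f_y$ at $x$ produces a nonempty open set that need not meet $V'$ at all, the compact set $S=\St(\St(f(x,b),\bar\U),\bar\U)$ depends on $x$ (so there is no fixed countable family of targets to run a category argument over), and the cover $\U$ is in general uncountable, so a union of exceptional sets indexed by $\U$ is not automatically meager. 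As written, this step is unverifiable; the correct move is simply to run the paper's $QC$-argument for the claim $f(\{x\}\times B)\subset S\subset L$ word for word (a $KC$-function is lower quasicontinuous in the first variable) and then observe that $f_B=f|\bar X_B^\circ\times B$ is an $L$-valued $KC$-function, to which the $KC$-Maslyuchenko property of $L$ applies.
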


This theorem combined with Proposition~\ref{p:star} implies:

\begin{corollary} A paracompact space $X$ is $KC$-Maslyuchenko if and only if $X$ has a star-countable cover by open $KC$-Maslyuchenko subspaces.
\end{corollary}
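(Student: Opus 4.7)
The plan is to derive the corollary as a direct application of Proposition~\ref{p:star} to the class $\C$ of $KC$-Maslyuchenko spaces. The ``only if'' direction is trivial: if $X$ itself is $KC$-Maslyuchenko, then the singleton cover $\{X\}$ is already a (finite, hence) star-countable cover of $X$ by open $KC$-Maslyuchenko subspaces.

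For the ``if'' direction, I would verify that the class $\C$ satisfies the three hypotheses of Proposition~\ref{p:star}: containment of all locally compact metrizable spaces, and closure under taking subspaces, strong $\bar\sigma$-sums, and fibered preimages. All of these follow at once from Theorem~\ref{t:KC-stable}, which asserts that $\C$ is $M$-stable. Indeed, by the very definition of $M$-stability, $\C$ contains all metrizable spaces (and so in particular all locally compact metrizable spaces) and is closed under taking subspaces, strong $\bar\sigma$-sums, and fibered preimages (among other operations such as homeomorphic images, countable Tychonoff products, $\bar G_\delta$-retral unions, Michael modifications, and sequentially admissible function spaces, which are not needed here).

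Once the hypotheses are checked, Proposition~\ref{p:star} applied to the paracompact space $X$ and the class $\C$ immediately yields the desired conclusion: a paracompact space $X$ belongs to $\C$ (i.e., is $KC$-Maslyuchenko) if and only if it admits a star-countable cover by open subspaces in $\C$. There is no substantive obstacle; all the real work has already been done in establishing Theorem~\ref{t:KC-stable} (whose proof would be obtained by adapting the six lemmas that established Theorem~\ref{t:QC-stable}) and Proposition~\ref{p:star} (whose proof uses the nerve of the cover together with Propositions~\ref{p:topsum} and \ref{p:komega}). The corollary is therefore purely a formal consequence of these two results.
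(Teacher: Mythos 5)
Your proposal is correct and matches the paper exactly: the corollary is stated there as an immediate consequence of Theorem~\ref{t:KC-stable} (the $M$-stability of the class of $KC$-Maslyuchenko spaces) combined with Proposition~\ref{p:star}. Your verification that $M$-stability supplies precisely the hypotheses of Proposition~\ref{p:star} is the intended argument.
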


Since each $KC$-function is a $QC$-function, each $QC$-Maslyuchenko space is $KC$-Maslyuchenko.

\begin{problem} Is there a $KC$-Maslyuchenko space which is not $QC$-Maslyuchenko?
\end{problem}

Theorems~\ref{t:KC-stable} and \ref{t:aleph0} imply that the classes of $QC$-Maslyuchenko and $KC$-Maslyuchenko spaces contain all metrizable spaces and all $\aleph_0$-spaces. The class of (paracompact) $\aleph$-spaces also contains all metrizable spaces and all $\aleph_0$-spaces. This suggests the following open problem.

\begin{problem} Is each (paracompact) $\aleph$-space $KC$-Maslyuchenko?  $QC$-Maslyuchenko?
\end{problem}

The following proposition gives a partial answer to this problem. Let us recall that a topological space $X$ is called {\em countably cellular} if any disjoint family of non-empty open sets in $X$ is at most countable.

\begin{proposition} Let $Z$ be a paracompact $\aleph$-space. For any $KC$-function $f:X\times Y\to Z$ defined on the product of a countably cellular topological space $X$ and a second-countable space $Y$ the set $D(f)$ has meager projection on $X$.
\end{proposition}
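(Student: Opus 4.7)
The plan is to adapt the strong $\bar\sigma$-sums argument from the $QC$-Maslyuchenko setting, using the countable cellularity of $X$ to cope with the fact that the $k$-network of the paracompact $\aleph$-space $Z$ is only $\sigma$-locally finite rather than countable. I would fix a countable base $\mathcal B$ of $Y$ and a $\sigma$-locally finite $k$-network $\mathcal N=\bigcup_{n\in\w}\mathcal N_n$ of $Z$ consisting of closed sets (obtained by replacing each member by its closure). For each $B\in\mathcal B$ and $N\in\mathcal N$ introduce $X_{B,N}=\{x\in X:f(\{x\}\times B)\subset N\}$ with interior $X_{B,N}^\circ$ and boundary $\partial X_{B,N}$ in $X$. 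Arguing exactly as in the proof of the strong $\bar\sigma$-sums lemma in Section~\ref{s:QC}, the closedness of $N$ together with the quasicontinuity of each $f_y$ (which is part of the $KC$-hypothesis) implies that $\partial X_{B,N}$ is nowhere dense in $X$.

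The key new step uses countable cellularity. For fixed $n$ and $B$ the family $\{X_{B,N}:N\in\mathcal N_n\}$ is point-finite in $X$: if $x\in X_{B,N}$, then for any chosen $y\in B$ we have $f(x,y)\in N$, and the local finiteness of $\mathcal N_n$ at $f(x,y)$ bounds the number of such $N$ by finitely many. Consequently $\{X_{B,N}^\circ:N\in\mathcal N_n\}$ is a point-finite family of open sets in the countably cellular space $X$, and by Shapirovskii's theorem (in a ccc space every point-finite family of non-empty open sets is countable) only countably many of them are non-empty. Let $\mathcal R_{n,B}\subset\mathcal N_n$ denote this countable subfamily, set $\mathcal R=\bigcup_{n\in\w,B\in\mathcal B}\mathcal R_{n,B}$ (still countable), and define the meager set $M=\bigcup_{B\in\mathcal B}\bigcup_{\mathcal F}\partial X_{B,\mathcal F}$, where $\mathcal F$ ranges over finite subsets of $\mathcal R$ and $X_{B,\mathcal F}=\{x:f(\{x\}\times B)\subset\bigcup\mathcal F\}$; the boundary $\partial X_{B,\mathcal F}$ is again nowhere dense by the same quasicontinuity argument, and there are only countably many summands, so $M$ is meager.

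It remains to verify continuity of $f$ at every $(x,y)$ with $x\in X\setminus M$. Given open $O\ni f(x,y)$, I would pick a sequence $(y_k)_{k\in\w}$ in $Y$ converging to $y$ (using first countability of the second-countable $Y$) and apply the $k$-network property to the compact set $K=\{f(x,y)\}\cup\{f(x,y_k):k\in\w\}$ to obtain a finite $\mathcal F\subset\mathcal N$ with $K\subset\bigcup\mathcal F\subset O$; continuity of $f^x$ then produces a basic $B\ni y$ with $f^x(B)\subset\bigcup\mathcal F$, so that $x\in X_{B,\mathcal F}$. The aim is to argue that $\mathcal F$ can be taken inside $\mathcal R$ and that $x\in X_{B,\mathcal F}^\circ$, yielding a neighborhood $U\times B$ of $(x,y)$ on which $f$ takes values in $\bigcup\mathcal F\subset O$.

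The main obstacle will be this final inclusion $\mathcal F\subset\mathcal R$ together with the passage from $X_{B,\mathcal F}$ to $X_{B,\mathcal F}^\circ$: because the network elements of $\mathcal N$ are closed but not necessarily open in $Z$, there is no direct way to arrange that each $N\in\mathcal F$ satisfies $f^x(B)\subset N$ for a single $N$, and the Shapirovskii-based countability reduction was set up for single network members rather than finite unions. I expect to resolve this by either re-running Shapirovskii's theorem on the enlarged family $\{X_{B,\mathcal F}^\circ:\mathcal F\in[\mathcal N_n]^{<\w}\}$ after checking it remains point-finite (which it does, by the same local-finiteness bound on $\mathcal N_n$), or by refining $\mathcal N$ to a $\sigma$-discrete $k$-network using Foged's theorem for paracompact $\aleph$-spaces; in the latter setting distinct members are pairwise disjoint, so every finite cover collapses to a single dominating element, to which the countable-cellularity reduction applies directly.
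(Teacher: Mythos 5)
Your setup (a closed $\sigma$-locally finite $k$-network, nowhere density of $\partial X_{B,N}$ via quasicontinuity of the sections $f_y$, and the reduction of each family $\{X_{B,N}^\circ:N\in\mathcal N_n\}$ to a countable subfamily by point-finiteness plus countable cellularity) is sound as far as it goes, but the proof breaks exactly at the obstacle you name, and neither of your proposed repairs closes it. The $k$-network property applied to the compact set $K=\{f(x,y)\}\cup\{f(x,y_k):k\in\w\}$ produces \emph{some} finite $\mathcal F\subset\mathcal N$, and you need both $\mathcal F\subset\mathcal R$ and $x\in X_{B,\mathcal F}^\circ$; for the latter you need $\partial X_{B,\mathcal F}$ to have been absorbed into $M$, which you only arranged for $\mathcal F\in[\mathcal R]^{<\w}$. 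Your first repair fails because the family $\{X_{B,\mathcal F}^\circ:\mathcal F\in[\mathcal N_n]^{<\w}\}$ is \emph{not} point-finite: if $f(\{x\}\times B)\subset N_0$, then $x\in X_{B,\{N_0\}\cup\mathcal G}$ for every finite $\mathcal G\subset\mathcal N_n$, so a single point lies in infinitely many of these sets; restricting to irredundant families $\mathcal F$ does not help either, since $f(\{x\}\times B)$ need not have compact closure and can meet infinitely many members of a locally finite family. Your second repair fails because the finite family $\mathcal F$ delivered by the $k$-network property draws its members from different discrete levels $\mathcal N_n$, so its members are not pairwise disjoint; and even within a single discrete level, a compact set covered by several pairwise disjoint closed sets is not contained in any one of them unless it is connected, so there is no ``single dominating element.''

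The underlying difficulty is that your argument exploits only the quasicontinuity of the sections $f_y$, which controls the sets $X_{B,N}$ one network member at a time, whereas the $k$-network property is inherently about finite subfamilies chosen per compact set. The paper's proof avoids this entirely by reducing to the $\aleph_0$-space case: it passes to the Baire part $X^\bullet$ of $X$, invokes Theorem~3 of \cite{MN00} to conclude that $f|X^\bullet\times Y$ is \emph{jointly} quasicontinuous, deduces that the image $f(X^\bullet\times Y)$ is countably cellular, and then uses Lemma~\ref{l:cellular} (where the paracompactness of $Z$ is genuinely needed --- your argument never uses it, which is itself a warning sign) to see that this image meets only countably many members of the $k$-network and hence is an $\aleph_0$-space; Corollary~\ref{c:QC-aleph0} then finishes the proof. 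If you want to salvage a direct argument, you must first establish some form of joint quasicontinuity in order to shrink the relevant part of $Z$ to an $\aleph_0$-subspace; the countable cellularity of $X$ applied fiberwise, as you do, does not suffice.
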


\begin{proof} Let $W$ be the union of all open meager subspaces of $X$ and observe that $W$ is meager in $X$. If $W=X$, then the space $X$ is meager and hence the projection of the set $D(f)$ on $X$ is meager. So, we assume that $W\ne X$. In this case the subspace $X\setminus W$ has non-empty interior $X^\bullet$ in $X$ and $X^\bullet$ is a Baire space. By Theorem~3 in \cite{MN00}, any $KC$-function defined on the product of a Baire space and a first-countable space  with values in a Tychonoff space is quasicontinuous. Consequently the restriction $f|X^\bullet\times Y$ is quasicontinuous. The countable cellularity of $X$ implies the countably cellularity of the Baire space $X^\bullet$. Using the second-countability of $Y$, we can show that the product $X^\bullet\times Y$ is countably cellular. The quasicontinuity of the function $f|X^\bullet\times Y$ implies that the countably cellular space $X^\bullet\times Y$ has countably cellular image $f(X^\bullet\times Y)$. By Lemma~\ref{l:cellular} proved below, the space $f(X^\bullet\times Y)$ has countable $k$-network and hence is an $\aleph_0$-space. It follows that the restriction $f|X^\bullet\times Y:X^\bullet\times Y\to f(X^\bullet\times Y)\subset Z$ is a $KC$-functions (and hence a $QC$-function) with values in an $\aleph_0$-space. By Corollary~\ref{c:QC-aleph0}, the set $D(f|X^\bullet\times Y)$ has meager projection on $X^\bullet$. Since the set $X\setminus X^\bullet$ is meager in $X$, the projection of the set $D(f)$ on $X$  is meager in $X$.
\end{proof}

\begin{lemma}\label{l:cellular} If $\mathcal N$ is a $\sigma$-locally finite family of subsets of a paracompact space $X$, then for any countably cellular subspace $C\subset X$ the subfamily $\{N\in\mathcal N:C\cap N\ne\emptyset\}$ is at most countable.
\end{lemma}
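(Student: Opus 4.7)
\emph{Plan.} I would reduce immediately to the case where $\mathcal N$ is genuinely locally finite, since a countable union of at most countable sets is countable. So suppose $\mathcal N$ is locally finite in $X$, set $\mathcal M=\{N\in\mathcal N:C\cap N\ne\emptyset\}$, and for each $N\in\mathcal M$ choose a point $x_N\in C\cap N$. The aim is to show that the set $D=\{x_N:N\in\mathcal M\}\subset C$ of chosen representatives is at most countable, and then to convert this bound on $D$ into a bound on $\mathcal M$.

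For the first half, I would observe that local finiteness of $\mathcal N$ at any point $x\in X$ supplies an open neighborhood $U\ni x$ meeting only finitely many $N\in\mathcal N$. Any such $U$ can therefore contain $x_N$ only for those finitely many $N$, so $U\cap D$ is finite. Hence $D$ is a closed discrete subset of $X$. Since $X$ is paracompact (and Hausdorff), it is collectionwise normal, so the discrete family of closed singletons $\{\{x\}:x\in D\}$ can be expanded to a pairwise disjoint family of open sets $\{W_x:x\in D\}$ in $X$ with $x\in W_x$. Because $D\subset C$, the family $\{W_x\cap C:x\in D\}$ consists of non-empty open sets in $C$ which are pairwise disjoint, and the countable cellularity of $C$ forces $|D|\le\aleph_0$.

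For the second half, I would use local finiteness once more: at each fixed $x\in D$, only finitely many $N\in\mathcal N$ contain $x$, so the fibre $\{N\in\mathcal M:x_N=x\}$ is finite. Summing, $|\mathcal M|\le\aleph_0\cdot\aleph_0=\aleph_0$, and unwinding the initial reduction gives the claim for the original $\sigma$-locally finite family.

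The main obstacle, and the only place where paracompactness (rather than mere normality) is genuinely used, is producing the pairwise disjoint open expansion $\{W_x\}_{x\in D}$ of the uncountable closed discrete set $D$: this is collectionwise normality, which is automatic in the class of paracompact Hausdorff spaces but would fail if we only assumed normality, since normality allows us to separate any two disjoint closed sets but not the whole family $\{\{x\}:x\in D\}$ simultaneously.
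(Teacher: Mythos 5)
Your proof is correct, but it takes a genuinely different route from the paper's. The paper also reduces to a single locally finite family $\mathcal N_i$, but then works with covers rather than with points: it takes an open cover $\mathcal U_i$ of $X$ witnessing local finiteness (each $U\in\mathcal U_i$ meets only finitely many $N\in\mathcal N_i$), uses paracompactness to pass to a $\sigma$-discrete open refinement $\mathcal V_i$, observes that countable cellularity of $C$ lets each discrete piece of $\mathcal V_i$ contribute only countably many members meeting $C$ (their traces on $C$ are pairwise disjoint non-empty open sets), and concludes since every $N\in\mathcal N_i$ meeting $C$ meets one of these countably many $V$'s, each of which meets only finitely many $N$'s. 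You instead choose a representative point $x_N\in C\cap N$ for each relevant $N$, show the set $D$ of representatives is closed and discrete, and invoke collectionwise normality of the paracompact Hausdorff space $X$ to expand $D$ to a cellular family in $C$; point-finiteness of $\mathcal N_i$ then controls the fibres of $N\mapsto x_N$. Both arguments lean on paracompactness through a standard equivalent: the paper through the Michael--Stone $\sigma$-discrete refinement theorem, you through Bing's theorem that paracompact $T_2$ spaces are collectionwise normal (and you need the Hausdorff convention for paracompactness to conclude $D$ is closed discrete, which the paper's route does not need as explicitly). Your closing remark correctly isolates where mere normality would fail; the paper's version of the same observation is that normality alone would not yield the $\sigma$-discrete refinement. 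Either proof is acceptable; the paper's has the mild advantage of never introducing a choice function on $\mathcal N$, while yours makes the role of cellularity more visibly geometric.
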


\begin{proof} Write the family $\mathcal N$ as the countable union $\mathcal N=\bigcup_{i\in\w}\mathcal N_i$ of locally finite families $\mathcal N_i$, $i\in\w$.
For every $i\in\w$ we can find an open cover $\U_i$ of the space $X$ such that for each open set $U\in\U_i$ the family $\{N\in\mathcal N_i:U\cap N\ne\emptyset\}$ is finite. Using the paracompactness of $X$, choose a $\sigma$-discrete open cover $\V_i$ refining the cover $\U_i$ (see \cite[5.1.16]{En}). Since the space $C$ is countably cellular, the subfamily $\V'_i=\{V\in\V_i:C\cap V\ne\emptyset\}$ is at most countable. Since each set $V\in\V_i$ is contained in some set $U\in\U_i$, the set $\mathcal N_i'=\bigcup_{V\in\V_i'}\{N\in\mathcal N_i:N\cap V\ne\emptyset\}$ is at most countable too. Then the family $\mathcal N'=\bigcup_{i\in\w}\mathcal N_i'$ is at most countable and contains the family $\{N\in\mathcal N:N\cap C\ne\emptyset\}$. \end{proof}

The $M$-stability of the class of $KC$-Maslyuchenko spaces implies that for any $\aleph_0$-space $X$ and any $KC$-Maslyuchenko space $Y$ the function space $C_k(X,Y)$ is $KC$-Maslyuchenko. For cosmic spaces $X$ we can prove a bit weaker result.

\begin{theorem}\label{t:K-func} If $Y$ is a $KC$-Maslyuchenko space, then for any cosmic space $X$, any space $Z\subset C(X,Y)$ endowed with an admissible topology $\tau$ is Piotrowski.
\end{theorem}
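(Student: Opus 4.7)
The plan is to reduce the Piotrowski property of $(Z,\tau)$ to the $KC$-Maslyuchenko property of $Y$ by lifting an arbitrary quasicontinuous function on a Baire space to a two-variable function on $B\times R$, where $R$ parametrizes $X$. Concretely, given a quasicontinuous $h:B\to(Z,\tau)$ with $B$ a non-empty Baire space, I would use that $X$ is cosmic together with Michael's theorem to obtain a continuous surjection $\xi:R\to X$ from a separable metrizable space $R$, and define $f:B\times R\to Y$ by $f(b,r)=h(b)(\xi(r))$.

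Next I would verify that $f$ is a $KC$-function. Continuity in the second variable is immediate, since $f^b=h(b)\circ\xi$ and $h(b)\in C(X,Y)$. Quasicontinuity in the first variable follows by writing $f_r=\delta_{\xi(r)}\circ h$ for the evaluation $\delta_{\xi(r)}:Z\to Y$; admissibility of $\tau$ forces $\tau\subset\tau_p|Z$, so $\delta_{\xi(r)}$ is $\tau$-continuous, and the composition of a quasicontinuous function with a continuous one is quasicontinuous.

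Since $R$ is second-countable and $Y$ is $KC$-Maslyuchenko, $D(f)$ has a meager projection $M$ on $B$. Because $B$ is Baire and non-empty, I would choose $b_0\in B\setminus M$, so that $f$ is jointly continuous at every point of $\{b_0\}\times R$. To conclude that $h$ is $\tau$-continuous at $b_0$, it suffices (again by $\tau\subset\tau_p|Z$) to establish continuity of $h$ at $b_0$ in the pointwise topology; and for a subbasic pointwise neighborhood $[x,V]\ni h(b_0)$ I would pick $r\in R$ with $\xi(r)=x$ and read off an open $U\ni b_0$ with $h(U)\subset [x,V]$ from the joint continuity of $f$ at $(b_0,r)$. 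Finite intersections handle basic neighborhoods and produce the desired continuity point of $h$.

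No step looks genuinely hard; the main subtlety to watch is that the admissibility inclusion $\tau\subset\tau_p|Z$ is used twice, once to keep the sections $f_r$ quasicontinuous and once to downgrade pointwise continuity of $h$ at $b_0$ to $\tau$-continuity. The other inclusion $\tau_k|Z\subset\tau$ plays no role, which is consistent with the conclusion being only the Piotrowski property rather than full $KC$-Maslyuchenkoness.
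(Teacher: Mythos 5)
Your construction of the $KC$-function $f(b,r)=h(b)(\xi(r))$ and the extraction of the meager set $M$ are sound, and routing through the separable metrizable space $R$ is a sensible way to meet the second-countability hypothesis in the definition of a $KC$-Maslyuchenko space (a point the paper glosses over by applying the property directly to $\ddot f:T\times X\to Y$ with $X$ merely cosmic). The genuine gap is in your final step. An admissible topology $\tau$ lies between the pointwise topology (coarser) and the compact-open topology (finer); the fact that $\tau$ refines $\tau_p|Z$ is exactly what you use to make the evaluations $\delta_{\xi(r)}$ $\tau$-continuous and hence the sections $f_r$ quasicontinuous. But for the same reason, continuity of $h$ at $b_0$ with respect to $\tau_p|Z$ is a \emph{weaker} statement than continuity with respect to $\tau$, so establishing pointwise continuity at $b_0$ does not produce a continuity point of $h:B\to(Z,\tau)$. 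The two occurrences of the relation between $\tau$ and $\tau_p|Z$ that you flag require opposite directions, and only the first is available. The half of admissibility you declare to play no role --- that $\tau$ is coarser than $\tau_k|Z$ --- is precisely what the paper uses to close the argument: from joint continuity of the evaluation at every point of $\{t\}\times X$ it deduces, by a tube-lemma compactness argument over each compact $K\subset X$ and each open $V\supset h(t)(K)$, that $h$ is continuous at $t$ as a map into $(Z,\tau_k|Z)$, and only then passes to the coarser topology $\tau$.

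A secondary obstacle to repairing your version as written: to run that compactness argument inside $R$ you would need every compact subset of $X$ to be the image of a compact subset of $R$, i.e.\ $\xi$ compact-covering. Michael's theorem provides a compact-covering $\xi$ when $X$ is an $\aleph_0$-space, but for a general cosmic space $\xi$ is only a continuous surjection, so compact sets of $X$ need not lift. You should therefore keep your map $f$ on $B\times R$ for the application of the $KC$-Maslyuchenko property, but carry out the final compact-open continuity estimate on $B\times X$ (where the evaluation is still separately continuous and hence $\ddot f$ is still continuous at the points of $\{b_0\}\times X$ once $f$ is, or, as in the paper, is treated directly), rather than trying to read $\tau$-continuity off pointwise convergence.
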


\begin{proof} To show that the topological space $Z$ is Piotrowski, fix any quasicontinuous function $f:T\to Z$ defined on a topological space $T$. Consider the evaluation function $e:Z\times X\to Y$, $e:(z,x)\mapsto z(x)$, which is separately continuous because of the inclusion $\tau|Z\subset\tau_p$. Then the function $\ddot f:T\times X\to Y$, $\ddot f:(t,x)\mapsto e(f(t),x)$, is a $KC$-function. Since the space $Y$ is $KC$-Maslyuchenko, there exists a meager set $M\subset T$ such that $D(\ddot f)\subset M\times X$. We claim that $D(f)\subset M$. We should prove that the function $f$ is continuous at each point $t\in T\setminus M$. Using the continuity of the function $\ddot f:T\times X\to Y$ at each point of the set $\{t\}\times X$, it can be shown that the function $f:T\to (Z,\tau_k)\subset C_k(X,Y)$ is continuous at $t$. Since $\tau_k|Z\subset\tau$, the function $f:T\to (Z,\tau)$ is continuous at $t$ too. So, $D(f)\subset M$.
\end{proof}

Applying Piotrowski spaces to $KC$-functions we obtain the following useful fact, which can be compared with Theorem 11 in \cite{KKM}.

\begin{theorem} Let $Z$ be a Piotrowski Tychonoff space. For each $KC$-function $f:X\times Y\to Z$ defined on the product of a topological space $X$ and a first-countable space $Y$ the set $D(f)$ of discontinuity points of $f$ is meager in $X\times Y$.
\end{theorem}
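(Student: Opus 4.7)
The plan is to reduce the statement to the case where the first factor is a Baire space and then combine the Maslyuchenko--Nesterenko quasicontinuity theorem with the characterization of regular Piotrowski spaces recalled in the introduction (for a regular $Z$, every quasicontinuous map into $Z$ has meager discontinuity set).

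First I would apply the Banach Category Theorem inside $X$: letting $\mathcal W$ be the family of all open meager subsets of $X$, the union $W=\bigcup\mathcal W$ is a meager open subset of $X$. Since $W$ is open, its boundary $\partial W$ is closed and nowhere dense, so the closure $\overline W=W\cup\partial W$ is meager in $X$ as well. Setting $X^\bullet=X\setminus\overline W$, which is the interior (in $X$) of $X\setminus W$, one easily checks that $X^\bullet$ is a Baire space: any non-empty open subset $U$ of $X^\bullet$ is open in $X$ and disjoint from $W$, so by maximality of $W$ it cannot be meager in $X$, and hence is not meager in $X^\bullet$ either.

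Next I would restrict $f$ to $X^\bullet\times Y$. The restriction is still a $KC$-function, its domain is the product of the Baire space $X^\bullet$ and the first-countable space $Y$, and its target is the Tychonoff space $Z$; so Theorem~3 of \cite{MN00} (already invoked in the proof of the preceding proposition) guarantees that $f|X^\bullet\times Y$ is quasicontinuous. Since $Z$ is Piotrowski and Tychonoff, hence regular, the characterization from \cite{BM} then yields that $D(f|X^\bullet\times Y)$ is meager in $X^\bullet\times Y$. Because $X^\bullet\times Y$ is open in $X\times Y$, this meagerness persists inside the ambient product, so $D(f|X^\bullet\times Y)$ is meager in $X\times Y$. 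The remaining part of $D(f)$ lies in $(X\setminus X^\bullet)\times Y=\overline W\times Y$, and a routine check shows that $\overline W\times Y$ is meager in $X\times Y$: each nowhere dense piece of $\overline W$ stays nowhere dense after crossing with $Y$. Summing up,
$$D(f)\subset D(f|X^\bullet\times Y)\cup(\overline W\times Y)$$
is a union of two meager subsets of $X\times Y$, hence meager.

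I expect the main obstacle to be the hidden appeal to the Banach Category Theorem in full topological generality, together with the careful bookkeeping needed to transport meagerness between $X^\bullet$, $\overline W\times Y$, and $X\times Y$; these facts are classical but easy to fumble, so I would spell them out explicitly.
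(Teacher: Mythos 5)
Your proof is correct and follows essentially the same route as the paper: Banach's category theorem yields the meager open set $W$ and the open Baire subspace $X^\bullet$, the restriction $f|X^\bullet\times Y$ is shown to be quasicontinuous, and the Piotrowski property of the regular space $Z$ together with the meagerness of $\overline{W}\times Y$ finishes the argument. The only divergence is in the quasicontinuity step: you invoke Theorem~3 of \cite{MN00} (exactly as the paper does in the proposition immediately preceding this theorem), whereas the paper's own proof here reduces to the real-valued case via a Tychonoff separating function $g:Z\to[0,1]$ and Theorem~4.4 of \cite{BT}; both justifications are valid.
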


\begin{proof} Let $W$ be the union of all open meager subspaces of $X$. If $W=X$, then the spaces $X$ and $X\times Y$ are meager an so is the set $D(f)$ in $X\times Y$. So, we assume that $W\ne X$. In this case the complement $X\setminus W$ has non-meager interior $X^\bullet$ in $X$ and the space $X^\bullet$ is Baire. We claim that the function $f|X^\bullet\times Y$ is quasicontinuous. Fix any point $(x,y)\in X^\bullet\times Y$ and open sets $O_{x,y}\subset X\times Y$, $O_{f(x,y)}\subset Z$ such that $(x,y)\in O_{x,y}$ and $f(x,y)\in O_{f(x,y)}$. Since $Z$ is Tychonoff, there exists a continuous function $g:Z\to[0,1]$ such that $f(x,y)\in g^{-1}(0)\subset g^{-1}([0,1))\subset O_{f(x,y)}$. By Theorem 4.4 of \cite{BT}, the real-valued $KC$-function $g\circ f|X^\bullet\times Y:X^\bullet\times Y\to [0,1]$ is quasi-continuous (more precisely, $D(f)\cap (X^\bullet\times\{y\})$ is meager in $X^\bullet\times\{y\}$). Then we can find a non-empty open subset $U\subset O_{x,y}$ such that $g\circ f(U)\subset[0,1)$ and hence $f(U)\subset g^{-1}([0,1))\subset O_{f(x,y)}$, witnessing that the function $f|X^\bullet\times Y$ is quasi-continuous. Taking into account that $X$ is a regular Piotrowski space, we conclude that the set $D(f|X^\bullet\times Y)$ is meager in $X^\bullet \times Y$ (see \cite{BM}). Since the set $X\setminus X^\bullet=\overline{W}$ is meager in $X$, the set $D(f)\subset \overline{W}\times Y\cup D(f|X^\bullet\times Y)$ is meager in $X\times Y$.
\end{proof}

\section{$CC$-Maslyuchenko spaces}\label{s:CC}

In this section we study $CC$-Maslyuchenko spaces. We recall that a topological space $Z$ is {\em $CC$-Maslyuchenko} if for any separately continuous function $f:X\times Y\to Z$ defined on the product of separable metrizable spaces $X,Y$ the set $D(f)$ has meager projections on $X$ and $Y$.

Modifying the proof of Theorems~\ref{t:QC-stable}, we can prove

\begin{theorem}\label{t:CC-stable} The class of $CC$-Maslyuchenko spaces is $M$-stable.
\end{theorem}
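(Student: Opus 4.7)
My plan is to reduce Theorem~\ref{t:CC-stable} to Theorem~\ref{t:QC-stable} by systematically adapting its six supporting lemmas. The base case (metrizable $\Rightarrow$ $CC$-Maslyuchenko) is handled by Calbrix--Troallic (Theorem~\ref{t:CT}): meager projection on $X$ is its conclusion, and meager projection on $Y$ follows by swapping $X$ and $Y$, which is legitimate because the $CC$-Maslyuchenko definition imposes symmetric hypotheses on the two factors. Closure under subspaces and homeomorphic images is immediate from the definition.

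Two observations make the transfer from $QC$ to $CC$ work. First, separate continuity is \emph{symmetric} in the two factors, so it suffices to prove that the projection of $D(f)$ on $X$ is meager; the analogous statement for the projection on $Y$ follows by exchanging the roles of $X$ and $Y$ in the same argument (both factors are separable metrizable, so both qualify as domains in either slot). Second, every separately continuous function is a $QC$-function: continuity of $f_y$ in the first variable immediately yields the lower quasicontinuity condition by taking $W = O_x \cap f_y^{-1}(O_{f(x,y)})$. Hence all the auxiliary constructions of the $QC$ proof remain valid when the input is separately continuous, and the intermediate functions they produce (obtained by restriction to open subsets, composition with continuous maps such as coordinate projections, retractions, or the Michael projection $\pr: Z_g \to Z$) remain separately continuous, so they are legitimate inputs to the $CC$-Maslyuchenko hypothesis of the building blocks.

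For each remaining closure operation---countable Tychonoff products, fibered preimages, strong $\bar\sigma$-sums, $\bar G_\delta$-retral unions, and Michael modifications---I would copy the corresponding lemma from the proof of Theorem~\ref{t:QC-stable} essentially verbatim, replacing every invocation of $QC$-Maslyuchenko on a building block by $CC$-Maslyuchenko applied to the (still separately continuous) auxiliary function, to obtain meager projection of $D(f)$ on $X$. The meager projection on $Y$ then comes from the symmetry remark.

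The main obstacle is closure under sequentially admissible function spaces. In the $QC$ proof one represents the $\aleph_0$-space $X$ as the compact-covering image of a separable metrizable $R$ via a continuous map $\xi: R \to X$, and for input $f: T \times S \to Z \subset C(X, Y)$ builds $g: T \times S \times R \to Y$ by $g(t, s, r) = f(t, s)(\xi(r))$, then checks that $g$ is a $QC$-function from $T \times (S \times R)$. For the $CC$ case I must instead check that $g$ is \emph{separately continuous} on $T \times (S \times R)$, with $T$ and $S \times R$ both separable metrizable: continuity of $g^t: S \times R \to Y$ is verbatim the sequential-continuity-of-evaluation argument from the $QC$ proof (using first-countability of $S \times R$), while continuity of $g_{(s,r)}: T \to Y$ follows from continuity of $f_s: T \to Z$ (separate continuity of $f$) composed with the continuous point evaluation $\delta_{\xi(r)}|Z: Z \to Y$ (admissibility of $\tau$). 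Applying the $CC$-Maslyuchenko property of $Y$ to $g$ yields a meager $M \subset T$ with $D(g) \subset M \times (S \times R)$; the remaining manipulations from the $QC$ proof transfer verbatim to give $D(f) \subset M \times S$, and the meager projection of $D(f)$ on $S$ follows by swapping the roles of $T$ and $S$ and rerunning the same argument.
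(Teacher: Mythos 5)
Your proposal is correct and follows exactly the route the paper intends: the paper's own ``proof'' of Theorem~\ref{t:CC-stable} is the single sentence ``Modifying the proof of Theorem~\ref{t:QC-stable}, we can prove,'' and your write-up supplies precisely the needed modifications --- that separately continuous functions are $QC$-functions (so all six lemmas transfer with the auxiliary functions remaining separately continuous on products of separable metrizable factors), that the second meager projection comes from the symmetry of separate continuity, and the correct separate-continuity check for $g$ on $T\times(S\times R)$ in the function-space lemma.
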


This theorem combined with Proposition~\ref{p:star} implies:

\begin{corollary} A paracompact space $X$ is $CC$-Maslyuchenko if and only if $X$ has a star-countable cover by open $CC$-Maslyuchenko subspaces.
\end{corollary}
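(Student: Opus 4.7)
The plan is to deduce this corollary as an immediate application of Proposition~\ref{p:star} to the class $\C$ of all $CC$-Maslyuchenko topological spaces. That proposition asserts exactly the desired equivalence for any class $\C$ that contains all locally compact metrizable spaces and is closed under taking subspaces, strong $\bar\sigma$-sums, and fibered preimages, so the entire job is to check that $\C$ satisfies these four hypotheses.

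First I would invoke Theorem~\ref{t:CC-stable}, which says that $\C$ is $M$-stable. By the definition of an $M$-stable class, $\C$ contains all metrizable spaces (a fortiori all locally compact metrizable spaces) and is closed under taking subspaces, fibered preimages, and strong $\bar\sigma$-sums, among the other operations listed in the definition. These are precisely the four requirements of Proposition~\ref{p:star}, so the proposition applies to $\C$ and yields the desired characterization: a paracompact space $X$ is $CC$-Maslyuchenko if and only if $X$ admits a star-countable cover by open $CC$-Maslyuchenko subspaces.

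There is essentially no obstacle beyond matching the hypothesis lists; the substantive content has already been packed into Theorem~\ref{t:CC-stable} (whose proof is indicated to parallel that of Theorem~\ref{t:QC-stable}) and into the nerve-theoretic argument of Proposition~\ref{p:star}. Note that the ``only if'' direction is trivial since $\{X\}$ itself is a star-countable open cover, and the non-trivial ``if'' direction is exactly where Proposition~\ref{p:star} applies its construction via partitions of unity, the nerve $N(\U)$, and closure under fibered preimages.
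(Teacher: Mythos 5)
Your proposal is correct and is exactly the paper's argument: the corollary is stated as an immediate consequence of Theorem~\ref{t:CC-stable} (the $M$-stability of the class of $CC$-Maslyuchenko spaces) combined with Proposition~\ref{p:star}, whose hypotheses are supplied by the definition of $M$-stability just as you verify. Nothing further is needed.
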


The following proposition shows that the class of $CC$-Maslyuchenko spaces is separably determined.

\begin{proposition}\label{p:separable} A topological space $Z$ is $CC$-Maslyuchenko if and only if for each countable subset $A\subset Z$ its second sequential closure $\cl_1(\cl_1(A))$ is  $CC$-Maslyuchenko.
\end{proposition}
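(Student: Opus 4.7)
The forward direction is essentially automatic. Since the class of $CC$-Maslyuchenko spaces is $M$-stable (Theorem~\ref{t:CC-stable}), it is closed under taking subspaces, so $\cl_1(\cl_1(A))\subset Z$ is $CC$-Maslyuchenko for every countable $A\subset Z$. (In fact this closure under subspaces is already built into the definition: the set of discontinuity points of a map $f:X\times Y\to W$ with $W\subset Z$ coincides with the set of discontinuity points of $f$ viewed as a map into $Z$.)

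For the reverse direction, the plan is to take an arbitrary separately continuous $f:X\times Y\to Z$ with $X,Y$ separable metrizable, fix countable dense subsets $D_X\subset X$ and $D_Y\subset Y$, and set $A:=f(D_X\times D_Y)$, which is countable. The key step is to show $f(X\times Y)\subset \cl_1(\cl_1(A))$. Given $(x,y)\in X\times Y$, I would first choose a sequence $y_n\in D_Y$ with $y_n\to y$; continuity of the slice $f^x:Y\to Z$ gives $f(x,y_n)\to f(x,y)$. For each $n$, I would then choose a sequence $x_{n,m}\in D_X$ with $x_{n,m}\to x$; continuity of the slice $f_{y_n}:X\to Z$ gives $f(x_{n,m},y_n)\to f(x,y_n)$ as $m\to\infty$, placing $f(x,y_n)\in\cl_1(A)$ because every $f(x_{n,m},y_n)$ lies in $A$. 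Passing to the limit in $n$ then yields $f(x,y)\in\cl_1(\cl_1(A))$.

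Once this containment is established, $f$ may be regarded as a separately continuous map $X\times Y\to \cl_1(\cl_1(A))$ with the subspace topology. Since the subspace topology is the relative topology, this does not change the set $D(f)$. By hypothesis the codomain is $CC$-Maslyuchenko, so $D(f)$ has meager projections on $X$ and $Y$, which proves that $Z$ itself is $CC$-Maslyuchenko.

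The only conceptual point worth noting is the appearance of the \emph{second} sequential closure: separate continuity only permits taking sequential limits one variable at a time, so two successive approximations are genuinely required in order to reach an arbitrary point of $X\times Y$ starting from the countable grid $D_X\times D_Y$. After this observation, the reduction to the hypothesis is formal.
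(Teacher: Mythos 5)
Your proof is correct and follows essentially the same route as the paper: the forward direction via closedness under subspaces, and the reverse direction via the countable grid $D_X\times D_Y$, the inclusion $f(X\times Y)\subset\cl_1(\cl_1(f(D_X\times D_Y)))$, and restriction of the codomain. Your two-step sequential approximation spells out the detail that the paper dismisses with ``it can be shown'', and it is accurate.
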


\begin{proof} The ``only if'' part is trivial. To prove the ``if'' part, assume that for each countable subset $A\subset Z$ the second separable closure $\cl_1(\cl_1(A))$ of $A$ in $Z$ is $CC$-Maslyuchenko.
To prove that the space $Z$ is $CC$-Maslyuchenko, fix a separately continuous function $f:X\times Y\to Z$ defined on the product of two separable metrizable spaces. Fix countable dense sets $D_X\subset X$ and $D_Y\subset Y$ and consider the countable dense subset $D=D_X\times D_Y$ of the product $X\times Y$. Using the separate continuity of $f$, it can be shown that for the countable set $A=f(D)$ we get $f(X\times Y)\subset\cl_1(\cl_1(A))\subset Z$. By our assumption, the space $\cl_1(\cl_1(A))$ is $CC$-Maslyuchenko, which implies that the set $D(f)$ of discontinuity points of $f$ has meager projections on $X$ and $Y$.
\end{proof}

Corollary~\ref{c:QC-aleph0} and Proposition~\ref{p:separable} imply:

\begin{corollary} Each $\aleph_0$-monolithic space is $CC$-Maslyuchenko. Consequently, the class of $CC$-Maslyuchenko spaces contains all $k^*$-metrizable spaces and  all $\aleph$-spaces.
\end{corollary}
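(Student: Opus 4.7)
The plan is to apply the separable-determination criterion of Proposition~\ref{p:separable} and then iterate the clause built into the definition of $\aleph_0$-monolithicity twice. Concretely, let $Z$ be an $\aleph_0$-monolithic space. By Proposition~\ref{p:separable}, it suffices to show that for every countable $A\subset Z$ the second sequential closure $\cl_1(\cl_1(A))$ is $CC$-Maslyuchenko.

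Because $Z$ is $\aleph_0$-monolithic, every countable subspace is an $\aleph_0$-space, so the set $A$ itself is an $\aleph_0$-space. The second clause of the definition of $\aleph_0$-monolithic spaces then says that the sequential closure of any $\aleph_0$-subspace is again an $\aleph_0$-space; applying this to $A$ gives that $\cl_1(A)$ is an $\aleph_0$-space, and applying it once more to $\cl_1(A)$ gives that $\cl_1(\cl_1(A))$ is an $\aleph_0$-space. By Corollary~\ref{c:QC-aleph0}, it follows that $\cl_1(\cl_1(A))$ is $QC$-Maslyuchenko. Since every separately continuous function is a $KC$-function and every $KC$-function is a $QC$-function, the $QC$-Maslyuchenko property implies $KC$-Maslyuchenko, which in turn implies $CC$-Maslyuchenko (the two-sided meager-projection conclusion is obtained by interchanging the roles of the two separable metrizable factors and applying the one-sided statement twice). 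Hence $\cl_1(\cl_1(A))$ is $CC$-Maslyuchenko, and Proposition~\ref{p:separable} delivers the same for $Z$.

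The ``consequently'' clause is then immediate from the diagram of generalized metric spaces displayed earlier: every $k^*$-metrizable space is $\aleph_0$-monolithic, and every $\aleph$-space is $k^*$-metrizable and hence also $\aleph_0$-monolithic, so the first part of the corollary applies to both classes.

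There is no substantive obstacle here: the content is already packaged into Proposition~\ref{p:separable}, Corollary~\ref{c:QC-aleph0}, and the definition of $\aleph_0$-monolithicity, and the only real step is to notice that the definition of $\aleph_0$-monolithicity is designed precisely so that one can absorb \emph{two} sequential closures (which is exactly what the hypothesis of Proposition~\ref{p:separable} requires).
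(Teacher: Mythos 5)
Your proof is correct and is exactly the argument the paper intends: the paper states the corollary as an immediate consequence of Proposition~\ref{p:separable} and Corollary~\ref{c:QC-aleph0}, and you have filled in precisely the intended details (two applications of the sequential-closure clause of $\aleph_0$-monolithicity, the chain $QC$-Maslyuchenko $\Rightarrow$ $KC$-Maslyuchenko $\Rightarrow$ $CC$-Maslyuchenko, and the cited implications $\aleph$-space $\Rightarrow$ $k^*$-metrizable $\Rightarrow$ $\aleph_0$-monolithic). No gaps.
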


We recall that a topological space $X$ is {\em $\aleph_0$-monolithic} if each countable subspace $A\subset X$ is an $\aleph_0$-space and for each $\aleph_0$-subspace $A\subset X$ the sequential closure $\cl_1(A)$ is an $\aleph_0$-space. Corollary~{5.12} in \cite{BBK} implies that each $k^*$-metrizable space is $\aleph_0$-monolithic. By Corollary~7.5 \cite{BBK}, each $\aleph$-space is $k^*$-metrizable.
Therefore, we get the implications:
$$\mbox{$\aleph_0$-space $\Ra$ $\aleph$-space $\Ra$ $k^*$-metrizable $\Ra$ $\aleph_0$-monolithic $\Ra$ $CC$-Maslyuchenko.}
$$

By Theorem~\ref{t:CC-stable}, the countable product of $CC$-Maslyuchenko spaces is $CC$-Maslyuchenko. In fact, we can prove a bit more.

  By $\add(\mathcal M)$ we denote the smallest cardinality of a family $\A$ of meager subsets on the real line whose union $\bigcup\A$ is not meager in $\IR$. It is well-known (\cite{Vau}, \cite{Blass})  that $\w_1\le\add(\mathcal M)\le\mathfrak c$, and $\add(\M)=\mathfrak c$ under Martin's Axiom.

\begin{proposition}\label{p:CCprod} For any family $(Z_\alpha)_{\alpha\in A}$ of $CC$-Maslyuchenko spaces with $|A|<\add(\M)$,  the Tychonoff product $Z=\prod_{\alpha\in A}Z_\alpha$ is $CC$-Maslyuchenko.
\end{proposition}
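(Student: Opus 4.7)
The approach is to exploit coordinatewise continuity in the Tychonoff product and reduce the problem to applying the $CC$-Maslyuchenko property of each factor, and then to combine the resulting meager sets using the cardinal hypothesis $|A|<\add(\M)$.

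Fix a separately continuous function $f\colon X\times Y\to Z$ defined on the product of separable metrizable spaces $X,Y$, and set $f_\alpha=\pr_\alpha\circ f$ for each $\alpha\in A$, where $\pr_\alpha\colon Z\to Z_\alpha$ is the coordinate projection. Each $f_\alpha\colon X\times Y\to Z_\alpha$ is separately continuous, so the hypothesis on $Z_\alpha$ supplies meager sets $M_\alpha^X\subset X$ and $M_\alpha^Y\subset Y$ containing $\pr_X(D(f_\alpha))$ and $\pr_Y(D(f_\alpha))$, respectively. Because every basic open set in the Tychonoff product $Z$ restricts only finitely many coordinates, a point $(x,y)$ is a continuity point of $f$ if and only if it is a continuity point of every $f_\alpha$; hence $D(f)=\bigcup_{\alpha\in A}D(f_\alpha)$, so $\pr_X(D(f))\subset\bigcup_{\alpha\in A}M_\alpha^X$ and $\pr_Y(D(f))\subset\bigcup_{\alpha\in A}M_\alpha^Y$.

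It remains to verify that unions of fewer than $\add(\M)$ meager subsets of a separable metrizable space are meager. I would establish this by embedding $X$ densely in its metric completion $\hat X$, which is a Polish space. Density of $X$ in $\hat X$ implies that a subset $N\subset X$ is nowhere dense in $X$ if and only if $\overline N^{\hat X}$ has empty interior in $\hat X$; consequently, every meager subset of $X$ is contained in the trace on $X$ of some meager subset of $\hat X$, and conversely every such trace is meager in $X$. Therefore $\add(\M(X))\ge\add(\M(\hat X))=\add(\M)$, the last equality being the classical fact that the additivity of the meager ideal on any uncountable Polish space coincides with $\add(\M)$. The same reasoning applies to $Y$, so the cardinality hypothesis forces both $\bigcup_\alpha M_\alpha^X$ and $\bigcup_\alpha M_\alpha^Y$ to be meager.

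The main (modest) obstacle is the additivity-transfer step just described: the spaces $X$ and $Y$ are not assumed to be Baire, so the statement $\add(\M(\IR))=\add(\M)$ does not apply to them directly. The passage to the metric completion provides a uniform workaround requiring no Baire assumption on $X$ or $Y$, which is what allows the argument to close.
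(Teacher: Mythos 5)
Your argument is correct and follows essentially the same route as the paper: reduce to the coordinate compositions $\pr_\alpha\circ f$, apply the $CC$-Maslyuchenko property of each factor $Z_\alpha$ to get meager sets $M_\alpha$, $N_\alpha$, and invoke the fact that a union of fewer than $\add(\M)$ meager subsets of a separable metrizable space is meager, established by passing to the metric completion. The only difference is that where you cite the additivity of the meager ideal on an uncountable Polish space as a classical fact, the paper proves it explicitly via the Cantor--Bendixson decomposition and a dense $G_\delta$ copy of the irrationals in the perfect part; conversely, you spell out the transfer of meagerness between $X$ and $\hat X$, which the paper leaves implicit.
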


\begin{proof} Fix any separately continuous function $f:X\times Y\to Z$ defined on the product of separable metrizable spaces. For every $\alpha\in A$ consider the coordinate projection $\pr_\alpha:Z\to Z_\alpha$, $\pr_\alpha:(x_\beta)_{\beta\in A}\mapsto x_\alpha$. Since each space $Z_\alpha$ is $CC$-Maslyuchenko,
the set $D(\pr_\alpha\circ f)$ is contained in the product $M_\alpha\times N_\alpha$ of two meager  sets $M_\alpha\subset X$ and $N_\alpha\subset Y$. The following (known) lemma implies that the unions $M=\bigcup_{\alpha\in A}M_\alpha$ and $N=\bigcup_{\alpha\in A}N_\alpha$ are meager and hence the set $D(f)\subset M\times N$ has meager projections on $X$ and $Y$.
\end{proof}

\begin{lemma} Let $\A$ be a family of meager subsets of a metrizable separable space $X$. If $|\A|<\add(\M)$, then the union $\bigcup\A$ is meager in $X$.
\end{lemma}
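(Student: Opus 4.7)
The plan is to reduce the statement to the classical additivity of the meager ideal on the real line by passing to a metric completion. Fix a metric $d$ on $X$ compatible with the topology of $X$ and let $\hat X$ denote the completion of $(X,d)$; then $\hat X$ is a Polish space in which $X$ sits as a dense subspace.

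The key technical step is a two-sided correspondence between closed nowhere dense subsets of $X$ and of $\hat X$. On the one hand, if $F\subset X$ is closed and nowhere dense in $X$, then $\bar F^{\hat X}$ is closed and nowhere dense in $\hat X$: any open $W\subset\hat X$ with $W\subset\bar F^{\hat X}$ satisfies $W\cap X\subset\bar F^{\hat X}\cap X=F$ (using that $F$ is closed in $X$), so $W\cap X=\emptyset$ by nowhere density of $F$, and then $W=\emptyset$ since $X$ is dense in $\hat X$. On the other hand, if $G\subset\hat X$ is closed and nowhere dense in $\hat X$, then $G\cap X$ is closed and nowhere dense in $X$: any non-empty open $V\subset X$ contained in $G$ has the form $W\cap X$ for some non-empty open $W\subset\hat X$, and the density of $X$ in $\hat X$ gives $W\subset\overline{W\cap X}^{\hat X}\subset G$, contradicting the nowhere density of $G$.

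Using the first correspondence, for each $A\in\A$ I fix a decomposition $A\subset\bigcup_{n\in\w}F_n^A$ with $F_n^A$ closed nowhere dense in $X$, and I set $\hat A:=\bigcup_{n\in\w}\overline{F_n^A}^{\hat X}$, which is a meager subset of $\hat X$ containing $A$. Then I invoke the classical fact that for any Polish space $Y$ the additivity of the meager ideal on $Y$ coincides with $\add(\M)$ (isolated points of $\hat X$, if any, belong to no meager set and cause no trouble). Since $|\A|<\add(\M)$, the union $\bigcup_{A\in\A}\hat A$ is meager in $\hat X$; applying the second correspondence to a witnessing family of closed nowhere dense subsets of $\hat X$ shows that $\big(\bigcup_{A\in\A}\hat A\big)\cap X$ is meager in $X$. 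As $\bigcup\A$ is contained in this trace, the proof is complete.

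The main obstacle is really the invocation of the invariance $\add(\M_Y)=\add(\M)$ for Polish $Y$, which is classical but not entirely elementary, and which can be found in standard references on cardinal characteristics of the continuum. A self-contained alternative is to fix a countable base of $X$, to code each closed nowhere dense subset of $X$ by a function in $\w^\w$ via the rule ``refine every non-empty basic open set into a smaller basic open set disjoint from the coded set'', and to reduce the assertion to the standard formulation of $\add(\M)$ in terms of the Baire space $\w^\w$.
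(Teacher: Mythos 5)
Your overall strategy coincides with the paper's: pass to the metric completion and reduce to the classical additivity number $\add(\M)$. Your treatment of the completion step is actually more careful than the paper's (the paper simply says that replacing $X$ by its completion one may assume $X$ is Polish; your two-sided correspondence between closed nowhere dense subsets of $X$ and of $\hat X$ is exactly the justification being suppressed there, and it is correct).

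The soft spot is the other half of the argument, which is precisely where the paper does its real work. You invoke the statement that for any Polish space $Y$ the additivity of the meager ideal on $Y$ coincides with $\add(\M)$, and you dispose of the non-perfect case with the parenthetical that isolated points belong to no meager set. That parenthetical does not suffice: deleting the isolated points of $\hat X$ need not leave a perfect space, and the completion can have a nontrivial countable scattered part whose non-isolated points (for instance, limits of sequences of isolated points) do belong to meager sets. The standard references state the invariance of $\add(\M)$ for $\IR$, $2^\w$, $\w^\w$, or perfect Polish spaces, so the reduction from an arbitrary Polish space to the perfect case has to be carried out. The paper does this via the Cantor--Bendixson decomposition $\hat X=C\cup F$ with $C$ open and countable and $F$ closed without isolated points: every meager set misses the isolated points of $C$, which are dense in $C$ by the Baire theorem, so all the traces on $C$ are contained in a single nowhere dense set; the boundary of $F$ is nowhere dense; and the interior of $F$ is a perfect Polish space containing a dense $G_\delta$-copy of $\IR\setminus\IQ$, where the definition of $\add(\M)$ applies directly. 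Your argument becomes complete once this reduction is inserted (or once one accepts the general-Polish form of the additivity statement as citable), but as written the justification you offer for the non-perfect case rests on a wrong reason.
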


\begin{proof} Replacing the space $X$ by its completion (with respect to any metric generating the topology of $X$), we can assume that the space $X$ is Polish. By Cantor-Bendixon Theorem, $X$ can be written as the disjoint union $X=C\cup F$ of an open countable subspace $C$ and a closed subspace  $F$ having no isolated point.
Let $F^\circ$ and $\partial F$ be the interior and boundary of $F$ in $X$, respectively.
By Baire Theorem, the set $\dot C$ of isolated point of the countable Polish space $C$ is dense in $C$. Each set $A\in\A$ is meager in $X$ and hence contain no isolated point of $C$. Then the set $C\cap \bigcup \A$ does not intersects the open dense subset $\dot C$ of $C$ and hence is nowhere dense in $C$ and in $X$. Since the set $\partial F$ is nowhere dense in $X$, the intersection $\bigcup\A\cap\partial F$ is nowhere dense in $X$ too. It remains to prove that the intersection  $F^\circ\cap\bigcup\A$ is meager in $F^\circ$. The Polish space $F^\circ$  contains no isolated points and hence contains a dense $G_\delta$-subset $Z$ homeomorphic to the space $\IR\setminus\IQ$ of irrational numbers (this follows from the Lavrentiev Theorem \cite[4.3.21]{En}). Now the definition of the cardinal $\add(\M)$ implies that the union $Z\cap\bigcup\A$ is meager in $Z$ and hence $F^\circ\cap\bigcup\A$ is meager in $F^\circ$ and also in $X$. This completes the proof of the lemma.
\end{proof}

Since each Tychonoff space of weight $\kappa$ embeds into the Tychonoff cube $[0,1]^\kappa$, Theorem~\ref{t:CC} and Proposition~\ref{p:CCprod} imply:

\begin{corollary}\label{c:CCweight} Each Tychonoff space $X$ of weight $w(X)<\add(\M)$ is $CC$-Maslyuchenko.
\end{corollary}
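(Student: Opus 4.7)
The plan is to combine the embedding of $X$ into a Tychonoff cube with the product stability given by Proposition~\ref{p:CCprod} and the hereditary property from $M$-stability. Write $\kappa = w(X) < \add(\M)$. Since $X$ is Tychonoff of weight $\kappa$, a standard result in general topology (see \cite[2.3.23]{En}) gives a topological embedding $X \hookrightarrow [0,1]^\kappa$. So it suffices to show that $[0,1]^\kappa$ is $CC$-Maslyuchenko and then invoke closure under subspaces.

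First I would note that the factor $[0,1]$, being metrizable, is $CC$-Maslyuchenko: this is an immediate consequence of Theorem~\ref{t:CC} (Calbrix--Troallic), since separable metrizable spaces are in particular second-countable, and the definition of $CC$-Maslyuchenko only requires control over products with separable metrizable $X$ and $Y$. Next, because $[0,1]^\kappa$ is the Tychonoff product of a family of $CC$-Maslyuchenko spaces indexed by a set of cardinality $\kappa < \add(\M)$, Proposition~\ref{p:CCprod} applies directly and yields that $[0,1]^\kappa$ is itself $CC$-Maslyuchenko.

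Finally, by Theorem~\ref{t:CC-stable} the class of $CC$-Maslyuchenko spaces is $M$-stable and in particular closed under taking subspaces (this closure is built into the definition of $M$-stability and was already used when the definition was introduced). Hence the subspace $X \subset [0,1]^\kappa$ inherits the $CC$-Maslyuchenko property, which is exactly the claim.

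There is essentially no hard step here; the entire content is packaged into Proposition~\ref{p:CCprod}. The only point requiring mild care is verifying that one may genuinely cite Theorem~\ref{t:CC} to conclude $[0,1]$ is $CC$-Maslyuchenko in the sense of the definition of this paper (i.e.\ with domain the product of \emph{separable metrizable} $X,Y$), but that is immediate since second-countable spaces are exactly the separable metrizable spaces under the regularity assumptions in force. All nontrivial combinatorics has already been absorbed by the lemma on unions of fewer than $\add(\M)$ meager subsets used in the proof of Proposition~\ref{p:CCprod}.
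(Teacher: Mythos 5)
Your argument is correct and is exactly the paper's route: embed $X$ into $[0,1]^{w(X)}$, note that $[0,1]$ is $CC$-Maslyuchenko by Theorem~\ref{t:CC}, apply Proposition~\ref{p:CCprod} to the cube, and finish by closure under subspaces. The paper states the corollary as an immediate consequence of precisely these three ingredients, so there is nothing to add.
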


The cardinal $\add(\M)$ in Proposition~\ref{p:CCprod} and Corollary~\ref{c:CCweight} cannot be replaced by a larger cardinal.

\begin{proposition} The Tychonoff power $\IR^{\add(\M)}$ is not $CC$-Maslyuchenko.
\end{proposition}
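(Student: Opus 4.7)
Set $\kappa=\add(\M)$. By the very definition of $\add(\M)$, there exists a family $\{A_\alpha:\alpha<\kappa\}$ of meager subsets of $\IR$ whose union $A=\bigcup_{\alpha<\kappa}A_\alpha$ is not meager in $\IR$. Since every nowhere dense subset of $\IR$ is contained in a closed nowhere dense set, I can replace each $A_\alpha$ by a meager $F_\sigma$-enlargement and write $A_\alpha=\bigcup_{n\in\w}F_{\alpha,n}$ for closed nowhere dense sets $F_{\alpha,n}\subset\IR$; the enlarged union still contains the original non-meager union $A$. The whole strategy is now to build a separately continuous function $f:\IR\times\IR\to\IR^\kappa$ whose discontinuity set projects onto a superset of $A$, thereby witnessing that $\IR^\kappa$ is not $CC$-Maslyuchenko (since the domain $\IR\times\IR$ is a product of separable metrizable spaces).

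\textbf{Construction.} For each closed nowhere dense $F\subset\IR$ I would use the classical ``bump'' function
$$\phi_F(x,y)=\frac{d(x,F)\cdot y}{d(x,F)^2+y^2}\quad\text{for }(d(x,F),y)\ne(0,0),\qquad \phi_F(x,0)=0\text{ for }x\in F.$$
A one-line verification shows that $\phi_F$ is separately continuous (the only potential trouble is at $F\times\{0\}$, but along each slice the function is identically zero there), that $|\phi_F|\le\tfrac12$ everywhere, and that $\phi_F$ is discontinuous at every point of $F\times\{0\}$ (approach $(x_0,0)$ with $x_0\in F$ along $x\in\IR\setminus F$ with $y=d(x,F)$; this is possible since $F$ is nowhere dense, and the value along this path is constantly $1/2$). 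For each $\alpha<\kappa$ I then set
$$f_\alpha=\sum_{n\in\w}2^{-n}\phi_{F_{\alpha,n}}:\IR^2\to\IR.$$
Uniform convergence of the series (each summand is bounded by $2^{-n-1}$) yields separate continuity of $f_\alpha$, and the discontinuity analysis above gives $\pr_X(D(f_\alpha))\supset\bigcup_n F_{\alpha,n}=A_\alpha$. Finally, define $f=(f_\alpha)_{\alpha<\kappa}:\IR\times\IR\to\IR^\kappa$; because $\IR^\kappa$ carries the product topology, $f$ is separately continuous.

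\textbf{Conclusion and obstacle.} Since each coordinate projection $\IR^\kappa\to\IR$ is continuous, continuity of $f$ at a point $(x,y)$ forces continuity of every $f_\alpha$ at $(x,y)$; hence $D(f)=\bigcup_\alpha D(f_\alpha)$ and
$$\pr_X(D(f))=\bigcup_{\alpha<\kappa}\pr_X(D(f_\alpha))\supset\bigcup_{\alpha<\kappa}A_\alpha=A,$$
which is non-meager in $\IR$. This contradicts the definition of $CC$-Maslyuchenko. The only genuinely subtle step is the legitimacy of defining $\phi_F$ and of controlling its discontinuity set: one has to ensure that for every $x_0\in F$ the approach curve $\{(x,d(x,F)):x\notin F\}$ really accumulates at $(x_0,0)$, which is exactly where the hypothesis that $F$ be \emph{nowhere} dense (not merely meager) is used. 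Every other step is a routine verification once this core construction is in place.
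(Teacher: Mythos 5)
Your proposal is correct and, at the top level, follows the same route as the paper: take $\add(\M)$ many meager sets with non-meager union, realize each as (containing) the projection of the discontinuity set of a separately continuous real-valued function on $\IR\times\IR$, and assemble these functions coordinatewise into a separately continuous map to the Tychonoff power. The difference lies in the middle step: the paper simply invokes the classical inverse-problem result of Baire (for every meager $F_\sigma$-set $A\subset\IR$ there is a separately continuous $f_A:\IR^2\to\IR$ with $D(f_A)=A\times A$), whereas you rebuild such functions by hand from the bump functions $\phi_F$. Your version is more self-contained and needs only the weaker inclusion $\pr(D(f_\alpha))\supset A_\alpha$; by contrast the paper additionally arranges the family to be closed under finite unions so as to get the clean equality $D(f)=(\bigcup\A)\times(\bigcup\A)$, which is not actually needed for the projection to be non-meager.

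One step in your construction deserves an explicit argument: you assert that $f_\alpha=\sum_{n\in\w}2^{-n}\phi_{F_{\alpha,n}}$ is discontinuous at every point of $F_{\alpha,n}\times\{0\}$, but in general a (uniformly convergent) sum of functions, one of which is discontinuous at a point, need not be discontinuous there --- discontinuities can cancel. Here the claim does hold, for the following reason, which you should record. Fix $x_0\in A_\alpha$ and let $m_0$ be the least index with $x_0\in F_{\alpha,m_0}$. The terms with $x_0\notin F_{\alpha,m}$ form a uniformly convergent series of functions each continuous at $(x_0,0)$, so their sum is continuous at $(x_0,0)$ and tends to $0=f_\alpha(x_0,0)$ there. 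Every remaining term satisfies $\phi_{F_{\alpha,m}}(x_0,0)=0$ and is nonnegative on the half-plane $y>0$, since $d(x,F_{\alpha,m})\ge 0$. Along your approach curve $y=d(x,F_{\alpha,m_0})>0$, $x\notin F_{\alpha,m_0}$, $x\to x_0$, the $m_0$-th term is identically $2^{-m_0-1}$ while the other exceptional terms are nonnegative; hence $f_\alpha\ge 2^{-m_0-1}+o(1)$ along a sequence converging to $(x_0,0)$, whereas $f_\alpha(x_0,0)=0$. With this supplement (and the observation that a map into a Tychonoff product is continuous at a point iff all its coordinates are, so that $D(f)=\bigcup_{\alpha}D(f_\alpha)$), your proof is complete.
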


\begin{proof} By definition of the cardinal $\add(\M)$, on the real line there is a family $\A$ consisting of $|\A|=\add(\M)$ meager sets such that the union $\bigcup\A$ is not meager in $\IR$. Replacing each $A\in\A$ by a larger meager set, we can assume that $A$ is of type $F_\sigma$ in $\IR$.
Also we can assume that the family $\A$ is closed under finite unions. By a classical result of Baire (see also \cite{MMSob}), for any meager $F_\sigma$-set $A\in\A$ there exists a separately continuous function $f_A:\IR\times\IR\to\IR$ such that $D(f)=A\times A$. Then the function $f=(f_A)_{A\in\A}:\IR\times\IR\to\IR^{\A}$, $f:(x,y)\mapsto (f_A(x,y))_{A\in\A}$, has $D(f)=(\bigcup\A)\times(\bigcup\A)$ and this set is not (projectively) meager in $\IR\times\IR$.
\end{proof}

Using Propositions~\ref{p:CCprod} and \ref{p:separable}, we shall prove that the class of Maslyuchenko spaces is closed under $\Sigma^{<\kappa}$-products with $\kappa\le\add(\M)$.
For a family $(X_\alpha,*_\alpha)$, $\alpha\in A$, of pointed topological spaces and a cardinal $\kappa$ the subspace
$$\Sigma_{\alpha\in A}^{<\kappa}(X_\alpha,*_\alpha)=\{(x_\alpha)_{\alpha\in A}\in\prod_{\alpha\in A}X_\alpha:|\{\alpha\in A:x_\alpha\ne *_\alpha\}|<\kappa\}$$
of the Tychonoff product $\prod_{\alpha\in A}X_\alpha$ will be called the {\em $\Sigma^{<\kappa}$-product} of the family of pointed spaces $X_\alpha$, $\alpha\in A$. Observe that the $\Sigma^{<\kappa}$-product consists of elements $x=(x_\alpha)_{\alpha\in A}\in\prod_{\alpha\in A}X_\alpha$ whose support $\supp(x)=\{\alpha\in A:x_\alpha\ne *_\alpha\}$ has cardinality $|\supp(x)|<\kappa$.

We shall say that a class $\C$ of topological spaces is {\em closed under taking $\Sigma^{<\kappa}$-products} if for any family of $T_1$-pointed spaces $(X_\alpha,*_\alpha)_{\alpha\in A}$ the $\Sigma^{<\kappa}$-product $\Sigma^{<\kappa}_{\alpha\in A}(X_\alpha,*_\alpha)$ belongs to the class $\C$.
We recall that a pointed space $(X,*)$ is called {\em $T_1$-pointed} if the singleton $\{*\}$ is closed in $X$.

\begin{proposition}\label{p:sigma} The class of $CC$-Maslyuchenko spaces is closed under taking $\Sigma^{<\add(\M)}$-products.
\end{proposition}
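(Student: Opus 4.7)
The plan is to combine the separable-determination result (Proposition~\ref{p:separable}) with the small-product result (Proposition~\ref{p:CCprod}). Let $\kappa=\add(\M)$, fix a family of $T_1$-pointed $CC$-Maslyuchenko spaces $(X_\alpha,*_\alpha)_{\alpha\in A}$, and set $Z=\Sigma^{<\kappa}_{\alpha\in A}(X_\alpha,*_\alpha)$. By Proposition~\ref{p:separable}, it suffices to show that for every countable subset $C\subset Z$ the second sequential closure $\cl_1(\cl_1(C))$ in $Z$ is $CC$-Maslyuchenko.

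First I would bound the total support. Each element $x\in C$ has $|\supp(x)|<\kappa$, and since $\add(\M)$ is a regular uncountable cardinal, the countable union
$$S=\bigcup_{x\in C}\supp(x)$$
still satisfies $|S|<\kappa$. Set $Z_S=\{x\in\prod_{\alpha\in A}X_\alpha:\supp(x)\subset S\}$; clearly $C\subset Z_S$, and the coordinate projection identifies $Z_S$ with the Tychonoff product $\prod_{\alpha\in S}X_\alpha$ (the complementary factors being singletons).

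Next I would verify that $\cl_1(\cl_1(C))\subset Z_S$. This is where the $T_1$-pointedness is used: for any coordinate $\alpha\notin S$ and any convergent sequence $(x_n)\subset Z_S$ with limit $x\in Z$, the sequence $(x_n)_\alpha$ is constantly $*_\alpha$, so $x_\alpha=*_\alpha$ (the singleton $\{*_\alpha\}$ is closed). Hence $\cl_1(Z_S\cap Z)\subset Z_S$, and in particular $\cl_1(C)\subset Z_S$; iterating, $\cl_1(\cl_1(C))\subset Z_S$.

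Finally, Proposition~\ref{p:CCprod} applies to $\prod_{\alpha\in S}X_\alpha$ because $|S|<\add(\M)$ and each $X_\alpha$ is $CC$-Maslyuchenko, so $Z_S$ is $CC$-Maslyuchenko. By the $M$-stability of the class of $CC$-Maslyuchenko spaces (Theorem~\ref{t:CC-stable}), this class is closed under subspaces and homeomorphic images, so the subspace $\cl_1(\cl_1(C))$ of $Z_S$ is $CC$-Maslyuchenko, as required. There is no real obstacle beyond the two minor points already highlighted: the regularity of $\add(\M)$ (used to keep $|S|<\kappa$) and the $T_1$-pointedness (used to trap the sequential closures inside $Z_S$).
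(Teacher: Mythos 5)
Your proof is correct and follows essentially the same route as the paper's: both use the regularity of $\add(\M)$ to bound the union of supports of a countable set, $T_1$-pointedness to trap the relevant closure inside the product $\prod_{\alpha\in S}X_\alpha$, Proposition~\ref{p:CCprod} to see that this product is $CC$-Maslyuchenko, and Proposition~\ref{p:separable} to conclude. The only cosmetic difference is that the paper phrases the reduction in terms of arbitrary separable subspaces and their closures, whereas you work directly with the second sequential closure of a countable set; both are equivalent via Proposition~\ref{p:separable}.
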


\begin{proof} Fix any $T_1$-pointed $CC$-Maslyuchenko spaces $(X_\alpha,*_\alpha)$, $\alpha\in A$. We claim that each separable subspace $S\subset\Sigma^{<\add(\M)}_{\alpha\in A}X_\alpha$ is $CC$-Maslyuchenko.
Fix a countable dense subset $D\subset S$ and consider the set $B=\bigcup_{x\in B}\supp(x)$. Since the cardinal $\add(\M)$ is regular (and hence has uncountable cofinality), the set $B$ has cardinality $|B|<\add(\M)$. It follows that
$$S\subset\bar D\subset \{x\in\prod_{\alpha\in A}X_\alpha:\supp(x)\subset B\}$$ and hence $S$ is homeomorphic to a subspace of the product $\prod_{\alpha\in B}X_\alpha$, which is $CC$-Maslyuchenko by Proposition~\ref{p:CCprod}. Consequently, the space  $S$ is $CC$-Maslyuchenko too. By Proposition~\ref{p:separable}, the space $\Sigma^{<\add(\M)}_{\alpha\in A}(X_\alpha,*_\alpha)$ is $CC$-Maslyuchenko.
\end{proof}

A compact space is called {\em Corson compact} if it is homeomorphic to a subspace of a $\Sigma^{<\w_1}$-product of real lines. Proposition~\ref{p:sigma} implies:

\begin{corollary}\label{c:CC-Corson} The class of $CC$-Maslyuchenko spaces contains all Corson compact spaces.
\end{corollary}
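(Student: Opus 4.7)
The proof should be essentially immediate from Proposition~\ref{p:sigma}, the inequality $\w_1\le\add(\M)$ recalled earlier in this section, and the subspace-stability built into the definition of an $M$-stable class.

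The plan is the following. First I would observe that the real line $\IR$, being metrizable, is $CC$-Maslyuchenko by Theorem~\ref{t:CC} of Calbrix and Troallic, and turning it into a $T_1$-pointed space by choosing, say, $0$ as the base point causes no problem since singletons are closed in $\IR$. Next I would invoke the inequality $\w_1\le\add(\M)$ stated earlier in this section to conclude that for every index set $A$ the $\Sigma^{<\w_1}$-product of pointed real lines is contained in the $\Sigma^{<\add(\M)}$-product of the same pointed spaces:
$$\Sigma^{<\w_1}_{\alpha\in A}(\IR,0)\subset\Sigma^{<\add(\M)}_{\alpha\in A}(\IR,0).$$
By Proposition~\ref{p:sigma}, the right-hand side is $CC$-Maslyuchenko.

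Now let $K$ be a Corson compact space. By definition $K$ embeds into some $\Sigma^{<\w_1}$-product of real lines, and hence, by the inclusion displayed above, $K$ is homeomorphic to a subspace of a $CC$-Maslyuchenko space. Finally I would use Theorem~\ref{t:CC-stable}, which asserts that the class of $CC$-Maslyuchenko spaces is $M$-stable and in particular closed under taking subspaces and homeomorphic images, to conclude that $K$ itself is $CC$-Maslyuchenko.

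There is no real obstacle here: every ingredient has already been established. The only point worth double-checking is the harmless inclusion of the $\Sigma^{<\w_1}$-product into the $\Sigma^{<\add(\M)}$-product, which reduces to the cardinal inequality $\w_1\le\add(\M)$ cited from \cite{Vau,Blass}.
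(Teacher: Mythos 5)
Your argument is correct and coincides with the paper's (the paper simply states that Corollary~\ref{c:CC-Corson} follows from Proposition~\ref{p:sigma}, leaving implicit exactly the chain you spell out: $\w_1\le\add(\M)$, closure under $\Sigma^{<\add(\M)}$-products, and closure under subspaces from $M$-stability). No issues.
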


The one-point compactification $\alpha D=\{\infty\}\cup D$ of any uncountable discrete space $D$, being Corson (even Eberlien) compact, is $CC$-Maslyuchenko. On the other hand, this space is not $KC$-Maslyuchenko.

\begin{example}\label{e:aD} For every uncountable discrete space $D$ there is a separately continuous map $f:X\times \alpha\w\to \alpha D$ defined on the product of a complete metric space $X$ and the one-point compactification of the countable discrete space $\w$ such that $D(f)=X\times\{\infty\}$ and hence the set $D(f)$ has non-meager projection onto $X$.
\end{example}

\begin{proof} It is clear that the topology of the countable product $D^\w$ is generated by a complete metric. It can be shown that the set $X=\{x\in D^\w:f$ is injective$\}$ is closed in $D^\w$. It follows that each injective map $x:\w\to D$ admits a unique continuous extension $\bar x:\alpha\w\to \alpha D$. It can be shown that the function $f:X\times\alpha\w\to \alpha D$, $f:(x,n)\mapsto \bar x(n)$, is separately continuous. Let us check that this function is discontinuous at each point $(x,\infty)$, $x\in X$. Take any point $a\in D\setminus x(\w)$ and consider the neighborhood $U=\alpha D\setminus\{a\}$ of the compactifying point $\infty$ in $\alpha D$. Assuming that the function $f$ is continuous at $(x,\infty)$, we can find a neighborhood $O_x\subset X$ of $x$ and a number $n_0\in\w$ such that $y(n)\in U$ for any $y\in O_x$ and $n\ge n_0$. By the definition of the Tychonoff product topology on $D^\w$, there exists a finite subset $F\subset\w$ such that $\{y\in X:y|F=x|F\}\subset O_x$. Now choose an injective function $y:\w\to D$ such that $y|F=x|F$ and $y(n)=a$ for some $n\ge n_0$. Then $y(n)=a\notin U$, which contradicts the choice of $O_x$ and $n_0$.
\end{proof}

Next, we construct an example of $CC$-Maslyuchenko space which is not  Piotrowski.
Consider the $\Sigma^{<\w_1}$-product
$$\Sigma^{<\w_1}_2=\{x\in\{0,1\}^{\w_1}:|\{\alpha\in\w_1:x_\alpha\ne 0\}|\le\w\}\subset\{0,1\}^{\w_1}$$of $\w_1$-many doubletons.

\begin{example}\label{e:CCnotP} The space $\Sigma^{<\w_1}_2$ is  $CC$-Maslyuchenko but not Piotrowski.
\end{example}

 \begin{proof} By Proposition~\ref{p:sigma}, the space $\Sigma^{<\w_1}_2$ is $CC$-Maslyuchenko.
To show that $\Sigma^{<\w_1}_2$ is not Piotrowski, consider the Banach space $\ell_1(\w_1)=\{x\in\IR^{\w_1}:\sum_{\alpha\in\w_1}|x(\alpha)|<\infty\}$ endowed with the norm $\|x\|=\sum_{\alpha\in\w_1}|x(\alpha)|$. It is easy to see that each point $x\in\ell(\w_1)$ has countable support $\supp(x)=\{\alpha\in\w_1:x(\alpha)\ne0\}$. Consequently, $\ell_1(\w_1)=\bigcup_{\alpha<\w_1}\ell_1(\alpha)$ where $\ell_1(\alpha)=\{x\in\ell_1(\w_1):\supp(x)\subset[0,\alpha)\}$. It follows that for every countable ordinal $\alpha$ the characteristic function $f_\alpha:\ell_1(\w_1)\to\{0,1\}$ of the set $\ell_1(\alpha)$ is quasicontinuous. Then the function $f=(f_\alpha)_{\alpha\in\w_1}:\ell_1(\w_1)\to\Sigma_2^{<\w_1}$ is quasicontinuous too. Since $D(f)=\ell_1(\w_1)$, the space $\Sigma^{<\w_1}_2$ fails to be Piotrowski.
\end{proof}

\begin{remark} By \cite[9.12]{Tod84}, there exists a first-countable Corson compact space $X$ containing no dense metrizable subspaces. By  Corollary~\ref{c:CC-Corson}, the space $X$ is $CC$-Masluychenko. Since each compact Piotrowski space    contains a metrizable dense $G_\delta$-subset (see \cite{BM}), the Corson compact space $X$ is not Piotrowski.
\end{remark}


The following example was constructed in \cite{MM12} (see also Theorem 3.3.2 \cite{Myronyk}).

\begin{example}\label{e:PnotCC} The function space $C_p[0,1]$ is cosmic and Piotrowski but not $CC$-Maslyuchenko.
\end{example}

\begin{proof} Let $sp:\IR\times\IR\to\IR$ be the standard discontinuous separately continuous function defined by the formula
$$sp(x,y)=\begin{cases}
\frac{2xy}{x^2+y^2}&\mbox{if $x^2+y^2>0$};\\
0&\mbox{otherwise}.
\end{cases}
$$For two real numbers $a,b\in[0,1]$ consider the continuous function $sp_{a,b}\in C_p[0,1]$ defined by the formula $sp_{a,b}(t)=sp(a-t,b)+sp(a,b-t)$ for $t\in[0,1]$. It can be shown (see Theorem 3.3.2 in \cite{Myronyk}) that the function $sp_{**}:[0,1]\times[0,1]\to C_p[0,1]$, $sp_{**}:(a,b)\mapsto sp_{a,b}$, is separately continuous and $D(sp_{**})=([0,1]\times\{0\})\cup(\{0\}\times[0,1])$, which implies that the space $C_p[0,1]$ is not $CC$-Maslyuchenko. Being cosmic, the space $C_p[0,1]$ is strictly fragmentable (see \cite[Proposition 2.1]{KM12} or \cite{BM}) and hence Piotrowski.
\end{proof}

\begin{example}\label{e:P+CCnotKC} The one-point compactification $\alpha D$ of an uncountable discrete space $D$ is Piotrowski and $CC$-Maslyuchenko but not $KC$-Maslyuchenko.
\end{example}

\begin{proof} The space $\alpha D$ being scattered is strictly fragmentable (by the discrete $\{0,1\}$-valued metric) and hence Piotrowski. Being Corson compact, the space $\alpha D$ is $CC$-Maslyuchenko. By Example~\ref{e:aD}, the space $\alpha D$ is not $KC$-Malsyuchenko.
\end{proof}


Example~\ref{e:P+CCnotKC} shows that the classes of Piotrowski and $CC$-Maslyuchenko spaces contain non-metrizable compact spaces. We do not know if the same is true for the class of $QC$-Maslyuchenko spaces.

\begin{problem} Is each $QC$-Maslyuchenko compact Hausdorff space metrizable?
\end{problem}

\begin{problem} Assume that a topological space $X$ is the union $X=A\cup B$ of two closed $QC$-Maslyuchenko subspaces. Is $X$ $CC$-Maslyuchenko? $KC$-Maslyuchenko? $QC$-Maslyuchenko?
\end{problem}


\end{document}